\newcommand{\R}{{\mathbb R}}
\DeclareMathOperator{\argmin}{argmin}
\DeclareMathOperator{\prox}{prox}
\newcommand{\cH}{{\mathcal H}}
\newcommand{\cO}{{\mathcal O}}
\newcommand{\cS}{{\mathcal S}}
\newcommand{\cG}{{\mathcal G}}
\newcommand{\demi}{\frac{1}{2}}
\newcommand{\ie}{{\it i.e.}\,\,}
\newcommand{\norm}[1]{\left\|{#1}\right\|}
\newcommand{\tcb}{\textcolor{blue}}
\newcommand{\bpa}[1]{\Big({#1}\Big)}
\def\<{\langle}
\def\>{\rangle}
\renewcommand*{\backrefalt}[4]{%
\ifcase #1 %
(Not cited)%
\or
(Cited on p.~#2)%
\else
(Cited on pp.~#2)%
\fi
}
\setlist[enumerate]{label=$(\roman*)$,leftmargin=3pt,align=left}
\begin{document}

\title{Fast convex optimization via closed-loop time scaling of gradient dynamics}
\titlerunning{Accelerated dynamics  for  convex optimization}

\author{Hedy Attouch \and Radu Ioan Bo\c{t} \and Dang-Khoa Nguyen}
\authorrunning{H. Attouch, R.I. Bo\c{t}, D.-K. Nguyen} 

\institute{
Hedy Attouch \at IMAG, Univ. Montpellier, CNRS, Montpellier, France\\
\email{hedy.attouch@umontpellier.fr} \and 
Radu Ioan Bo\c{t}
\at Faculty of Mathematics, University of Vienna, Oskar-Morgenstern-Platz 1, 1090 Vienna, Austria,\\  \email{radu.bot@univie.ac.at}\and 
Dang-Khoa Nguyen
\at Faculty of Mathematics and Computer Science, University of Science, Ho Chi Minh City, Vietnam,
\at Vietnam National University, Ho Chi Minh City, Vietnam, \\\email{ndkhoa@hcmus.edu.vn}
}

\date{\today}

\maketitle

\begin{abstract}
  In a Hilbert setting, for convex differentiable optimization, we develop a general framework for adaptive accelerated gradient methods. They are based on damped inertial dynamics where the coefficients are designed in a closed-loop way. 
 Specifically, the damping is a feedback control of the velocity, or of the gradient of the objective function.
For this, we develop  a closed-loop version of the  time scaling and averaging technique introduced by the authors.
We thus obtain  autonomous inertial dynamics
which involve vanishing viscous damping and  implicit Hessian driven damping.
By simply using the convergence rates for the continuous steepest descent  and Jensen's inequality, without the need for further Lyapunov analysis, we show that the trajectories have several remarkable properties at once: they ensure  fast convergence of  values,  fast convergence of the   gradients towards zero, and they converge to optimal solutions. Our approach leads to parallel algorithmic results,
that we study in the case of proximal algorithms.
These are among the very first general results of this type obtained using autonomous dynamics.  Since the proposed numerical methods are based on proximal techniques, the results can be extended to a broader class, specifically to the problem of minimizing a proper, lower semicontinuous, and convex function. Numerical experiments are conducted to demonstrate the efficiency of the proposed methods.
\end{abstract}

\medskip

\keywords{fast convex optimization; damped inertial dynamic; time scaling; averaging; closed-loop control; Nesterov and Ravine algorithms; Hessian driven damping; proximal algorithms}

\medskip

\noindent \textbf{AMS subject classification} 37N40, 46N10, 49M30, 65B99, 65K05, 65K10, 90B50, 90C25

\bigskip

\section{Introduction}

In a real Hilbert space $\cH$, we develop a dynamic approach to the rapid resolution of convex optimization problems which relies on inertial dynamics whose damping is designed as a closed-loop control.
We consider the  minimization problem

\vspace{-2mm}
\begin{equation}\label{basic-min}
\min \left\lbrace  f(x) : \, x\in\cH  \right\rbrace,
\end{equation}
where, throughout the paper, we make the following  assumptions on the function $f$ to be minimized

\vspace{-2mm}
\begin{equation}\label{eq:basic_hypo}
(\mathcal A) \, \begin{cases}
f: \cH \rightarrow \R  \text{ is a convex function of  class }    {\mathcal C}^1; \, S= \argmin_{\cH} f \neq \emptyset ;
\vspace{1mm}\\
\nabla f   \text{ is Lipschitz continuous on the bounded sets of } \cH.
\end{cases}
\end{equation}
Our study is part of the close links between dissipative dynamical systems and optimization algorithms, the latter being obtained by temporal discretization of the continuous dynamics.
Our study comes as a natural extension of the authors' previous work \cite{ABN} where the technique of time scaling and averaging
was used in an open-loop way, giving rise to non-autonomous damped inertial dynamics with fast convergence properties.
In the present paper, we take advantage of the simplicity and flexibility of  this technique to develop it in a closed-loop way. This will give rise to autonomous damped inertial dynamics with fast convergence properties.
Recall that the low-resolution ODE   obtained by Su, Boyd, and  Cand\`es \cite{SBC} of the accelerated gradient method of Nesterov, together with the corresponding high-resolution ODE \cite{ACFR}, \cite{SDJS} (which involves an additional Hessian driven damping term)  are non-autonomous dynamics, the coefficient of viscous friction being of the form $\alpha/t$. Our study therefore opens a new path in the field of first-order adaptive optimization methods.

\subsection{Time scale and averaging: the open-loop approach}
Let us briefly explain the time scaling and averaging method in the  open-loop case on a model example (see \cite{ABN} for more details). Then we will look at how to develop a corresponding
closed-loop approach.
As the basic starting dynamic, we consider the  continuous steepest descent
\begin{equation}\label{SD}
{\rm (SD)} \quad   \dot{z}(s) + \nabla f (z(s)) =0,
\end{equation}
for which we have the classical convergence result
$$
f \left( z \left( s \right) \right) -\inf\nolimits_{\cH} f = o \left( \dfrac{1}{s} \right) \textrm{ as } s \to + \infty .
$$
Then, we make the change of time variable $s=\tau (t)$ in (SD), where $\tau (\cdot)$ is an increasing  function from $\mathbb R_{+}$ to $\mathbb R_{+}$, continuously differentiable, and  satisfying $\lim_{t \to +\infty}\tau (t) = + \infty$. Setting $y(t):= z(\tau(t))$, we get 

\begin{equation}\label{change var2-b}
 \dot{y} (t)      +   \dot{\tau}(t) \nabla f(y(t)) =0.
\end{equation}
The convergence rate becomes
\begin{equation}\label{SD_rescale_1}
   f (y(t))- \inf\nolimits_{\cH} f = o \left( \dfrac{1}{\tau(t)} \right) \textrm{ as } t \to + \infty.
\end{equation}
Taking $\tau(\cdot)$ which grows faster than the identity, makes the solution trajectories unchanged but travelled faster.
The price to pay is that \eqref{change var2-b} is a non-autonomous dynamic in which the coefficient in front of the gradient term tends to infinity as $t \to +\infty$. 
This prevents from using gradient methods to discretize it. Recall that for gradient methods the step size has to be less than or equal to twice the inverse of the Lipschitz constant of the gradient. To overcome this difficulty we come with the second step of our method which is averaging.
Let us attach to $y(\cdot)$ the new function $x: [t_0, +\infty[ \to \cH$ defined by 

\begin{equation}\label{change var24}
\dot{x}(t) + \frac{1}{\dot{\tau}(t)}(x(t)-y(t))  = 0 ,
\end{equation}
with $x(t_0) =x_0$ given in $\cH$. We shall explain further the averaging interpretation.
Equivalently

\begin{equation}\label{change var240}
y(t)=   x(t)+  \dot{\tau}(t) \dot{x}(t)  .
\end{equation}
 \noindent By temporal derivation of  \eqref{change var240} we get
 
\begin{equation}\label{change var25}
\dot{y}(t) =  \dot{x}(t)+ \ddot{\tau}(t) \dot{x}(t)  + \dot{\tau}(t) \ddot{x}(t)   .
\end{equation}
\noindent Replacing $y(t)$ and $\dot{y}(t)$  as given by \eqref{change var240} and 
\eqref{change var25} in 
\eqref{change var2-b}, we get

\begin{equation}\label{change var27}
\ddot{x}(t) + \dfrac{1+ \ddot{\tau}(t)}{\dot{\tau}(t)}\dot{x}(t) + \nabla f\Big(x(t)+   \dot{\tau}(t) \dot{x}(t) \Big)  = 0 .
\end{equation}
In doing so, we passed from the  first-order  differential equation    \eqref{change var2-b} to the second-order differential equation   \eqref{change var27}, with the advantage that now the coefficient in front of the gradient is fixed.
Let us now particularize the time scale $\tau (\cdot)$. Taking
\begin{equation}\label{change var275_b}
 \tau (t)   = \dfrac{t^2}{2(\alpha -1)} ,
\end{equation}
gives $\frac{1+ \ddot{\tau}(t)}{\dot{\tau}(t)} = \frac{\alpha}{t}$, and the corresponding dynamic with implicit Hessian driven damping

\begin{equation}\label{implicit10}
\ddot{x}(t) + \frac{\alpha}{t} \dot{x}(t) + \nabla f\Big(x(t)+   \dfrac{t}{\alpha -1} \dot{x}(t) \Big)  = 0 .
\end{equation}
In this dynamic, the Hessian driven damping appears in an implicit form. This type of dynamic was initiated in \cite{ALP}, see also \cite{MJ} for a related autonomous system  in the case of a strongly convex function $f$. The rationale justifying the use of the term ``implicit" comes from the observation that by a Taylor expansion (as $t \to +\infty$ we have $t\dot{x}(t) \to 0$ which justifies the use of Taylor expansion), we have

\[
\nabla f\bpa{x(t)+\dfrac{t}{\alpha -1}\dot{x}(t)}\approx \nabla f (x(t)) + \dfrac{t}{\alpha -1}\nabla^2 f(x(t))\dot{x}(t) ,
\]

\noindent thus making the Hessian damping appear indirectly in \eqref{implicit10}.
Because of its important role in attenuating the oscillations,
several recent studies have been devoted to inertial dynamics combining the asymptotic vanishing damping with the geometric Hessian-driven damping (coined sometimes Newton-type inertial dynamics); see e.g., \cite{ABCR,APR2,ACFR,ACFR-Opti,AFK,BCL,CBFP,ML1,SDJS}. In turn, the corresponding algorithms, among which IGAHD enjoys several favorable properties, introduce a correction term in the Nesterov accelerated gradient method (see \cite{Nest1,Nest2}) which reduces the oscillatory aspects. 

 Note that in \eqref{implicit10} the coefficient of the Hessian damping is proportional to the inverse of the viscosity damping. Thus asymptotically when the viscous damping tends towards zero, and therefore can cause many small oscillations to appear, the coefficient of the Hessian driven damping tends towards infinity, and therefore has an effective effect on the attenuation of the oscillations.
This  is the situation considered by Attouch-Bo\c{t}-Nguyen \cite{ABN}, who obtained convergence rates comparable to those associated with the Nesterov accelerated gradient method.
A major advantage of this approach is that there is no need to do a Lyapunov analysis, we only use the classical convergence rate for the continuous steepest descent. Moreover, the convergence of the trajectories is a direct consequence of the known results for the steepest descent.

\subsection{Closed-loop control}

The idea is to exploit the  time scaling and averaging method and the fact that (SD) provides several quantities which are  increasing and converge to $+ \infty$  as $t \to +\infty$, so which are eligible for time scaling. This will enable us to perform  time scaling and averaging in 
a closed-loop way. Indeed, in (SD), the velocity and the norm of the gradient are monotonically decreasing to zero. So, the idea is to use their inverse for defining the time scaling. Specifically, in a first result we are going to define the derivative of the time scaling $\tau(\cdot)$ as a function of the inverse of the speed. This means acceleration of the time scaling  when the speed decreases. Following this approach,  we will obtain in Theorem \ref{thm:basic_inertial_vel}  the following model result.

\begin{theorem}
Suppose that  $f \colon \cH \to \R$ satisfies $(\mathcal A)$.
	Let us choose the positive parameters according  to $q >0$, $p \geq 1$, and $\gamma >1$. Let
	$x \colon \left[ t_{0} , + \infty \right[ \to \cH$ be a solution trajectory of the following system
	
	\begin{equation}
		\label{change var2540vv-intro}
		\begin{cases}
			\ddot{x}(t)+
			\dfrac{(1+\gamma)\dot{\tau}(t)^2 -\tau(t)\ddot{\tau}(t)} {\tau(t)\dot{\tau}(t)} \dot{x}(t)+ \gamma
			\frac{\dot{\tau}(t)^2}{\tau(t)} \nabla f \left(x(t)+  \dfrac{1}{\gamma} \dfrac{\tau(t)} {\dot{\tau}(t)} \dot{x}(t) \right) &=0 \vspace{2mm} \\	
			\tau \left( t \right) - \dfrac{1}{q^{q}} \left( t_{0} + \displaystyle\int_{t_{0}}^{t} \left[ \lambda \left( r \right) \right] ^{\frac{1}{q}} dr \right) ^{q} & = 0 \vspace{2mm} \\
			\left[ \lambda \left( t \right) \right] ^{p} \left\lvert \dot{\tau} \left( t \right) \right\rvert ^{p-1} \left\lVert \nabla f \left( x \left( t \right) + \dfrac{1}{\gamma} \dfrac{\tau(t)} {\dot{\tau}(t)} \dot{x} \left( t \right) \right) \right\rVert ^{p-1} & = 1 .
		\end{cases}
	\end{equation}

	\medskip
	
	\noindent Then  we have the fast convergence of values: as $t\to +\infty$
	
	\begin{equation}\label{SD_rescale_1200bbc}
		f (x(t))- \inf\nolimits_{\cH} f = o \left( \dfrac{1}{t^{1+q-\frac{1}{p}}} \right) .
	\end{equation}
	Moreover, the solution trajectory $x(t)$ converges weakly as $t \to +\infty$, and its limit belongs to $S=\argmin f$.
\end{theorem}

\noindent As a special case, take $p=1$, $q=2$.  Then, the last equation of  \eqref{change var2540vv-intro} gives  $\lambda (t) \equiv 1$. According to this,  the second equation of \eqref{change var2540vv-intro} gives $\tau (t)={\frac{t^2}{4}}$, and we find a case with time scaling in an open-loop form. After elementary calculation, the first equation of \eqref{change var2540vv-intro} is written as

$$
\ddot{x}(t)+
			\dfrac{\alpha} {t} \dot{x}(t)+
			\dfrac{\alpha-1}{2}\nabla f \left( x(t)+ \dfrac{t} {\alpha-1} \dot{x}(t) \right)=0,
			$$
with $\alpha = 2\gamma +1 >3$, and the convergence rate of the values becomes
\begin{equation}\label{SD_rescale_1200bbc8}
		f (x(t))- \inf\nolimits_{\cH} f = o \left( \dfrac{1}{t^{2}} \right) .
	\end{equation}
We therefore recover the results obtained by the authors in the case of the open loop, giving the optimal convergence rates for general convex differentiable optimization.
This inertial formulation may seem at first glance  complicated. Indeed it is equivalent to the first-order system in time and space

\begin{equation}
		\label{defi lambda-vel_b_intro}
		\begin{cases}
			\dot{y} \left( t \right) + \dot{\tau} \left( t \right) \nabla f \left( y \left( t \right) \right) & = 0  \vspace{1mm}\\	
			\dot{x}(t) + \gamma \dfrac{\dot{\tau}(t)}{\tau(t)} x(t)  - \gamma \dfrac{\dot{\tau}(t)}{\tau(t)}y(t) & = 0 \vspace{1mm} \\	
			\tau \left( t \right) - \dfrac{1}{q^{q}} \left( t_{0} + \displaystyle\int_{t_{0}}^{t} \left[ \lambda \left( r \right) \right] ^{\frac{1}{q}} dr \right) ^{q} & = 0 \vspace{1mm} \\
			\left[ \lambda \left( t \right) \right] ^{p} \left\lVert \dot{y} \left( t \right) \right\rVert ^{p-1} & = 1,
		\end{cases}
	\end{equation}
	
\smallskip
	
\noindent whose temporal discretization  provides corresponding optimization algorithms, see  Theorem \ref{thm:prox-conv-31}.

\subsection{Link with the existing literature}

Contrary to the rich literature that has been devoted to non-autonomous damped inertial methods and their links with the fast first-order optimization algorithms  for general convex optimization (in particular the  Nesterov accelerated gradient method), only a small number of papers have been devoted to these questions, based on  autonomous methods. Indeed the heavy ball method of Polyak only provides the asymptotic convergence rate $1/t$ for general convex functions.  The idea is therefore to see if we can mimic the fast convergence properties of the Su, Boyd, and  Cand\`es dynamic  model (see \cite{SBC}) of the Nesterov accelerated gradient method, using autonomous dynamics. A natural idea is to design the damping term, on which is based the optimization properties of the system, in a closed-loop way. In this direction, we can mention the  following  contributions.

\smallskip

a) Our study has a natural link with works devoted to regularized Newton methods  for solving monotone inclusions (and \eqref{basic-min} in particular).
Given a general maximally monotone operator $A : \cH \rightrightarrows \cH$, to overcome the ill-posed character of the  continuous Newton method, in line with \cite{ASv}, Attouch, Redont and Svaiter have  studied in \cite{ARS} the following closed-loop dynamic version of the Levenberg-Marquardt method

\vspace{-1mm}
\begin{eqnarray*}
 \left\{
\begin{array}{rcl}
v(t)  \in     A(x(t)) \hspace{2cm} \\
\rule{0pt}{12pt}
 \|v(t)\|^{\gamma}  \dot{x}(t)  + \beta   \dot{v}(t) + v(t) =0 .
 \end{array}\right.
\end{eqnarray*}

\noindent When $\gamma >1$, they showed the  well-posedness of the above system, and analyzed its convergence properties.
When $A= \nabla f$  this system writes 

$$
\|\nabla f (x(t))\|^{\gamma}  \dot{x}(t)  + \beta   \nabla^2 f (x(t))\dot{x}(t) +\nabla f (x(t))  =0.
$$

\noindent Thus, its inertial version

$$
\ddot{x}(t) + \|\nabla f (x(t))\|^{\gamma}  \dot{x}(t)   + \beta   \nabla^2 f (x(t))\dot{x}(t) +\nabla f (x(t))  =0
$$

\noindent  falls within the framework  of our study with the damping  equal to a closed-loop control of the norm of the gradient.
The techniques developed in \cite{ARS} are particularly useful for studying the well-posedness of  dynamics with implicit features.

\medskip

b)  Although significantly different, our approach has several points in common with the  article by  Lin and Jordan \cite{LJ}.
In this article, the authors study the   closed-loop dynamical system

\begin{equation}
	\label{ds:LJ:fi}
	\begin{cases}
		\dot{y} \left( t \right) + \dot{\tau} \left( t \right) \nabla f \left( x \left( t \right) \right) & = 0 \vspace{1mm}\\
		\dot{x} \left( t \right) + \frac{\dot{\tau} \left( t \right)}{\tau \left( t \right)} \left( x \left( t \right) - y \left( t \right) \right) + \frac{\left[ \dot{\tau} \left( t \right) \right] ^{2}}{\tau \left( t \right)} \nabla f \left( x \left( t\right) \right) & = 0 \vspace{1mm} \\
		\tau \left( t \right) - \dfrac{1}{4} \left( \int_{0}^{t} \sqrt{\lambda \left( r \right)} dr + c \right) ^{2} & = 0 \vspace{1mm} 
		\\
		\left[ \lambda \left( t \right) \right] ^{p} \left\lVert \nabla f \left( x \left(t \right) \right) \right\rVert ^{p-1} & = \theta ,
	\end{cases}
\end{equation}

\noindent where $c > 0$ and $0<\theta< 1$. The corresponding second-order in time damped inertial system writes as follows

\vspace{-2mm}
\begin{equation}
	\label{ds:LJ:fi-inertial}
\ddot{x}(t)+ \dfrac{2\left[ \dot{\tau}(t) \right] ^{2} -\tau(t)\ddot{\tau}(t)} {\tau(t)\dot{\tau}(t)} \dot{x}(t)+
			\dfrac{\left[ \dot{\tau}(t) \right] ^{2}}{\tau(t)} \nabla^2 f \left( x(t) \right)\dot{x}(t) + \frac{\dot{\tau}(t) (\dot{\tau}(t) + \ddot{\tau}(t)  )}{\tau(t)}\nabla f \left( x(t) \right) =0.
\end{equation}
In the above system, the Hessian driven damping comes in an explicit way because of the structure of the first equation which differs from the structure of the continuous steepest descent. In contrast,  in our approach, the first equation is the rescaled continuous steepest descent, and  the Hessian driven damping comes   implicitly.
 Let us highlight some advantages of our approach. 

\smallskip

$\bullet$ Our system is introduced in a natural way by using the time scaling and averaging method. This makes unnecessary to perform a Lyapunov analysis for the inertial system. It has already been done for the continuous steepest descent. This results in a significantly simplified mathematical analysis.

\smallskip

$\bullet$ Our dynamic model contains an additional parameter $q$ which, when $q=2$, gives the setting of Lin and Jordan, and which, when judiciously tuned, gives better convergence rates.

\smallskip

$\bullet$ Our approach provides the weak convergence of the trajectories to optimal solutions.

\smallskip

We shall return later to the precise comparison between the two systems.

\smallskip

c) In \cite{ABC},  Attouch, Bo\c{t} and Csetnek study the convergence properties of 
the Autonomous Damped Inertial Gradient Equation

 \begin{equation*}
\mbox{\rm (ADIGE)}    \qquad \ddot{x}(t) + \mathcal G \Big( \dot{x}(t),  \nabla f({x}(t)), \nabla^2 f({x}(t))\Big)    +  \nabla f (x(t)) = 0,
\end{equation*}
where  the damping term $\mathcal G \Big( \dot{x}(t),  \nabla f (x(t)),  \nabla^2 f({x}(t)) \Big) $ acts as a closed-loop control.
They pay particular attention to the role played by the parameter $r>1$ in the asymptotic convergence analysis of the dynamic 

$$
\ddot x(t) +  \|   \dot x(t)\|^{r-2} \dot x(t)+ \nabla f(x(t))=0.
$$

\noindent They show that the case $r=2$  separates the weak damping ($r>2$) from the strong damping ($r<2$),  which emphasizes the importance of this case. These questions have also been considered by Haraux and Jendoubi 
in \cite{HJ}.

\smallskip

d) In \cite{Song}, Song, Jiang, and  Ma develop an interesting technique for accelerating high-order algorithms under general H\"older continuity assumption.  Their continuous-time framework reduces to an inertial system without
Hessian-driven damping in the first-order setting, which has been proven to be an inaccurate surrogate. 
Although underlying their approach, the acceleration via time scaling, the averaging technique, and the closed-loop tuning of the coefficients are not clearly identified.

\subsection{Organization of the paper}

After a general presentation of the article in the introduction, we provide in Section 
\ref{sec:SD-estimate} a general estimate of the time scaling for the continuous steepest descent when it is defined in a closed-loop way. This is crucial for the rest of the paper.
Then  we specialize these results to  situations of particular interest, and examine in details the case of closed-loop systems induced respectively by  velocities, and then by  gradients. In Section \ref{sec:accelerated}, which is the main part of the paper, we develop the next important step in our approach, which is the averaging operation. This provides  accelerated 
damped inertial dynamics that are autonomous and with fast convergence properties. Finally, in Section \ref{prox} we analyze the fast convergence properties of proximal algorithms which come naturally from the temporal discretization of the continuous dynamics.

\section{Closed-loop time scaling of the steepest descent}\label{sec:SD-estimate}

\subsection{Formulation of the closed-loop time scaling}
Given $t_{0} \geq 0, \, q > 0$, and $p\geq 1$,  the time scale function $\tau \colon \left[ t_{0} , + \infty \right[ \to \R_{++}$ is defined by

\begin{equation}
		\label{defi lambda-vel_b_general}
		\begin{cases}
			\dot{y} \left( t \right) + \dot{\tau} \left( t \right) \nabla f \left( y \left( t \right) \right) & = 0  \vspace{1mm}\\	
			\tau \left( t \right) - \dfrac{1}{q^{q}} \left( t_{0} + \displaystyle\int_{t_{0}}^{t} \left[ \lambda \left( r \right) \right] ^{\frac{1}{q}} dr \right) ^{q} & = 0 \vspace{1mm} \\
			\left[ \lambda \left( t \right) \right] ^{p} \left[ \cG \left( y \left( t \right) \right) \right] ^{p-1} & = 1,
		\end{cases}
	\end{equation}

\noindent where $\cG (\cdot) $ is a given   positive, continuous function  that depends on the information of the trajectory $y(\cdot)$.
This general formalism allows us to unify the various situations coming from different choices of the time scaling as a feedback control of the state of the system.
 For example $\cG$ may be a function of  $y, \dot{y}, f \left( y \right), \nabla f \left( y \right)$ and/or any mixture combination of them.
Then the function $\lambda (\cdot)$ is continuous and it links the coefficient of $\nabla f$, namely $\dot{\tau} (\cdot)$, with the solution trajectory $y(\cdot)$.\\
As a useful result,  note that for every $t \geq t_{0}$, it holds
\begin{eqnarray}
	\dot{\tau} \left( t \right) &=& \dfrac{1}{q^{q-1}} \left( \displaystyle\int_{t_{0}}^{t} \left[ \lambda \left( r \right) \right] ^{\frac{1}{q}} dr + t_{0} \right) ^{q-1} \left[ \lambda \left( t \right) \right] ^{\frac{1}{q}} \nonumber \\
	&=& \left[ \tau \left( t \right) \right] ^{\frac{q-1}{q}} \left[ \lambda \left( t \right) \right] ^{\frac{1}{q}} > 0 . \label{time-scale-1}
\end{eqnarray}

\noindent Moreover, the relations \eqref{defi lambda-vel_b_general} allow us to cover the open-loop case. 
In particular, when $p=1$ it holds $\lambda \left( t \right) = 1$ for every $t \geq t_{0}$. This yields for every $q > 0$
\begin{equation*}
	\tau \left( t\right) = \left( \dfrac{t}{q} \right) ^{q} .
\end{equation*}
Taking further $q := 1$, then $\tau \left( t \right)$ becomes the \emph{regular time} in variable $t$, namely $\tau \left( t \right) = t$ for every $t \geq t_{0}$.\\
Let us  specify the interpretation of \eqref{defi lambda-vel_b_general} as a steepest descent dynamic which is rescaled in time in a closed-loop way.
\begin{proposition}
\label{prop equiv}
Suppose that $f \colon \cH \to \R$ satisfies $(\mathcal A)$.
Let $t_{0} \geq 0,\,  q > 0$, $p \geq 1$ and $y \colon \left[ t_{0} , + \infty \right[ \to \cH$ be a solution trajectory of the  system
\eqref{defi lambda-vel_b_general}. Suppose that
\begin{equation*}
	\lim_{s \to + \infty} \tau \left( s \right) = + \infty .
\end{equation*}
Then $y(\cdot)$ is a solution trajectory of a time rescaled continuous steepest descent {\rm (SD)}, as described below:\\
 Let $s_{0} = \tau \left( t_{0} \right)$ and $z \colon \left[ s_{0} , + \infty \right[ \to \cH$ be a solution trajectory of the following system
\begin{equation}
	\dot{z} \left( s \right) + \nabla f \left( z \left( s \right) \right) = 0 .
\end{equation}
Then we have
\begin{equation*}
	y \left( t \right) = z \left( \tau \left( t \right) \right) \qquad \forall t \geq t_{0} ,
\end{equation*}
and there exists a continuously differentiable function $\sigma \colon \left[ s_{0} , + \infty \right[ \to \R_{++}$ such that
\begin{equation*}
	z \left(s \right) = y \left( \sigma \left( s \right) \right) \qquad \forall s \geq s_{0} .
\end{equation*}
\end{proposition}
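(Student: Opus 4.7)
The plan is to exploit the fact that the first equation of \eqref{defi lambda-vel_b_general}, namely $\dot{y}(t) + \dot{\tau}(t) \nabla f(y(t)) = 0$, is already a time-rescaled steepest descent: the factor $\dot{\tau}(t) > 0$ is exactly the derivative of a time change. The whole statement will follow once $\tau$ is known to be a $C^{1}$-diffeomorphism from $\left[ t_{0} , +\infty \right[$ onto $\left[ s_{0} , +\infty \right[$, because then $z := y \circ \tau^{-1}$ will be the desired steepest descent trajectory.

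First I would establish the regularity and invertibility of $\tau$. Continuity and strict positivity of $\lambda$ follow from the third line of \eqref{defi lambda-vel_b_general} rewritten as $\lambda(t) = \left[ \cG(y(t)) \right]^{(1-p)/p}$, together with the hypothesis that $\cG$ is positive and continuous. The integral representation in the second line then makes $\tau$ of class $C^{1}$, and formula \eqref{time-scale-1} yields $\dot{\tau}(t) > 0$ for every $t \geq t_{0}$. Combined with $\tau(t_{0}) = s_{0}$ and the standing hypothesis $\lim_{t\to+\infty}\tau(t) = +\infty$, this shows that $\tau$ is a strictly increasing $C^{1}$-bijection onto $\left[ s_{0} , +\infty \right[$, and the inverse function theorem provides $\sigma := \tau^{-1} \in C^{1}(\left[ s_{0} , +\infty \right[; \left[ t_{0} , +\infty \right[)$ with $\dot{\sigma}(s) = 1/\dot{\tau}(\sigma(s))$.

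Second, I would define $z(s) := y(\sigma(s))$ and differentiate. The chain rule gives $\dot{z}(s) = \dot{y}(\sigma(s))\,\dot{\sigma}(s) = \dot{y}(\sigma(s))/\dot{\tau}(\sigma(s))$, and substituting the first equation of \eqref{defi lambda-vel_b_general} at $t = \sigma(s)$, namely $\dot{y}(\sigma(s)) = -\dot{\tau}(\sigma(s))\nabla f(y(\sigma(s)))$, yields $\dot{z}(s) = -\nabla f(z(s))$, which is precisely (SD). The two identities $y(t) = z(\tau(t))$ and $z(s) = y(\sigma(s))$ are then immediate from $\sigma \circ \tau = \mathrm{id}$ and $\tau \circ \sigma = \mathrm{id}$.

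The only non-routine point is confirming the $C^{1}$ regularity and strict monotonicity of $\tau$, which ultimately rests on the continuity and positivity of $\cG$ along the trajectory $y(\cdot)$; once $\tau$ is known to be a $C^{1}$-diffeomorphism, the remainder is a textbook change of time variable, and the two relations between $y$ and $z$ are dual reformulations of the same fact.
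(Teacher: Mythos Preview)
Your proof is correct and follows essentially the same approach as the paper: both arguments establish that $\tau$ is a strictly increasing bijection from $[t_0,+\infty[$ onto $[s_0,+\infty[$ (using the positivity and continuity of $\lambda$ together with the hypothesis $\lim_{t\to+\infty}\tau(t)=+\infty$), set $\sigma=\tau^{-1}$, define $z=y\circ\sigma$, and verify via the chain rule that $z$ solves (SD). Your version is slightly more explicit about the $C^{1}$ regularity of $\sigma$ via the inverse function theorem, but the structure and key ideas are identical.
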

\begin{proof}
We already interpreted how to go from a solution trajectory $z(\cdot)$ of (SD) to the closed-loop system above via the time scaling function $\tau(\cdot)$.
Let us now show the reverse direction.
Let $y \colon \left[ t_{0} , + \infty \right[ \to \cH$ be a solution trajectory of \eqref{defi lambda-vel_b_general}.
We have that $\lambda$ is continuous and positive on $\left[ t_{0} , + \infty \right[$, therefore $\tau$ is a monotonically increasing function, hence  injective.
On the other hand, we have $t_{0} = \tau \left( t_{0} \right) = \left( \frac{t_{0}}{q} \right) ^{q}$. Since by assumption $\lim_{t \to + \infty} \tau \left( t \right) = + \infty$, this means $\tau$ is a continuous function whose image contains $\left[ s_{0} , + \infty \right[$, hence surjective.
Combining these premises, we have shown that $\tau$ is a bijection, which means it is invertible.

\noindent Set $\sigma \equiv \tau^{-1}$ and   make the change of time variable $t := \sigma \left( s \right)$ in \eqref{defi lambda-vel_b_general}. Let us define
\begin{equation*}
	z \left( s \right) = y \left( \sigma \left( s \right) \right) = y \left( \tau^{-1} \left( s \right) \right) .
\end{equation*}
Then by the chain rule, we have
\begin{equation*}
	\dot{z} \left( s \right) = \dot{y} \left( \tau^{-1} \left( s \right) \right) \dfrac{1}{\dot{\tau} \left( \tau^{-1} \left( s \right) \right)} = \dot{y} \left( \sigma \left( s \right) \right) \dfrac{1}{\dot{\tau} \left( \sigma \left( s \right) \right)} .
\end{equation*}
This leads to
\begin{equation*}
	\dot{z} \left( s \right) + \nabla f \left( z \left( s \right) \right) = 0 .
\end{equation*}
In other words, $z \colon \left[ s_{0} , + \infty \right[ \to \cH$ is a solution trajectory of (SD).
\qed
\end{proof}

The above assertion allows us to transfer the convergence results of (SD) to some closed-loop systems.
In particular, given a time scaling function $\tau(\cdot)$ as described above, by making the change of time variable $s := \tau \left( t \right)$, we obtain the following results from Theorem \ref{SD_pert_thm} in the appendix applied to the unperturbed continuous steepest descent system.
\begin{eqnarray}
&&\displaystyle{\int_{t_0}^{+\infty} } \dfrac{\tau \left( t \right)}{\dot{\tau} \left( t \right)} \left\lVert \dot{y} \left(t \right)  \right\rVert ^{2} dt  < + \infty , \label{change var vel} \\
&&\displaystyle{\int_{t_0}^{+\infty} } \tau \left( t \right) \dot{\tau} \left( t \right) \left\lVert \nabla f \left( y \left( t \right) \right) \right\rVert ^{2} dt  < + \infty , \label{change var grad} \\
&&f (y(t))- \inf\nolimits_{\cH} f  = o \left( \dfrac{1}{\tau(t)} \right) \textrm{ as } t \to + \infty, \label{change var func}\\
&&  \left\lVert \nabla f \left( y \left( t \right) \right) \right\rVert = o \left( \dfrac{1}{\tau(t)} \right) \textrm{ as } t \to + \infty. \label{change var norm grad}
\end{eqnarray}

\subsection{Lower bound estimate of the time scaling $\tau(t)$}

 As key ingredient of our approach, the next step is to establish a lower bound for $\tau(t)$ in terms of $t$. This will reflect the acceleration of our dynamic via time scaling and allow us to achieve fast  convergence rates. For this, we will need the following technical lemma, which can be seen as a nonlinear Gronwall result.
 
\begin{lemma}\label{lemma tau}
Suppose that there exists $C_{0} > 0$ and $b > a \geq 0$ such that
\begin{equation}
	\int_{t_{0}}^{t} \left[ \tau \left( r \right) \right] ^{a} \left[ \lambda \left( r \right) \right] ^{-b} dr \leq C_{0} < + \infty \quad \forall t \geq t_0.
\end{equation}
Then there exists $C_{1} > 0$ such that
\begin{equation}\label{tau s}
	\tau \left( t \right) \geq C_{1} \left( t - t_{0} \right) ^{\frac{qb+1}{b-a}} \quad \forall t \geq t_0.
\end{equation}
\end{lemma}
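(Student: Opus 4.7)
The natural first move is to eliminate the integral definition of $\tau$ by introducing $u(t) := t_{0}+\int_{t_0}^{t}[\lambda(r)]^{1/q}\,dr$. Then $\tau(t)=u(t)^{q}/q^{q}$ and $\dot u(t)=[\lambda(t)]^{1/q}$, so the hypothesis rewrites as
\begin{equation*}
\int_{t_0}^{t} u(r)^{qa}\,\dot u(r)^{-qb}\,dr \le q^{qa}\,C_{0} \qquad \forall t\ge t_{0},
\end{equation*}
and proving the claimed inequality reduces to establishing a bound of the form $u(t)\ge \kappa\,(t-t_{0})^{(qb+1)/(q(b-a))}$, since raising to the $q$-th power and dividing by $q^{q}$ then recovers the exponent $(qb+1)/(b-a)$ and transforms the constant into the claimed $C_{1}$.

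To obtain such a bound, the plan is to control $t-t_{0}=\int_{t_0}^{t} 1\,dr$ from above by H\"older's inequality. Writing
\begin{equation*}
1 = \bigl[u(r)^{qa}\dot u(r)^{-qb}\bigr]^{\alpha}\cdot\bigl[u(r)^{-qa\alpha/(1-\alpha)}\dot u(r)^{qb\alpha/(1-\alpha)}\bigr]^{1-\alpha}
\end{equation*}
and applying H\"older with conjugate exponents $1/\alpha$ and $1/(1-\alpha)$, the first factor is controlled by $(q^{qa}C_{0})^{\alpha}$ thanks to the hypothesis. The decisive choice is $\alpha = 1/(qb+1)$, which forces $qb\alpha/(1-\alpha)=1$ and $qa\alpha/(1-\alpha)=a/b$, so that the second factor reduces to the exact derivative $u(r)^{-a/b}\dot u(r)=\frac{d}{dr}\bigl[\frac{b}{b-a}u(r)^{(b-a)/b}\bigr]$. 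Dropping the subtracted $u(t_{0})^{(b-a)/b}$ only strengthens the upper bound, and the elementary exponent check $\frac{b-a}{b}\cdot\frac{qb}{qb+1}=\frac{q(b-a)}{qb+1}$ yields $t-t_{0}\le C\,u(t)^{q(b-a)/(qb+1)}$ for a constant $C$ depending only on $q,a,b,C_{0}$. Inverting this inequality gives the desired lower bound on $u(t)$, and then on $\tau(t)$.

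The proof should be mostly a careful manipulation rather than one with a conceptual obstacle; the main thing is to keep track of the H\"older exponent arithmetic and to verify the one analytic point that $-a/b>-1$, so that the antiderivative $u^{(b-a)/b}/((b-a)/b)$ is a genuine power rather than a logarithm. This is automatic from $b>a\ge 0$, which also ensures $b>0$ and $(b-a)/b\in(0,1]$, so the Hölder construction is well defined. No Lyapunov analysis is needed: the entire argument is a nonlinear Gronwall-type manipulation on the scalar quantity $u$.
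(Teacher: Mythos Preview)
Your argument is correct. The change of variable $u(t)=t_0+\int_{t_0}^t\lambda^{1/q}$ translates the hypothesis as you say, the H\"older split with $\alpha=1/(qb+1)$ forces the second factor to be $\int u^{-a/b}\dot u$, and since $b>a\ge0$ ensures $-a/b>-1$ and $u(t_0)=t_0\ge0$, the explicit integration and subsequent inversion go through cleanly.

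The paper's proof uses the same two ingredients---a H\"older inequality with exponents $(qb+1,\tfrac{qb+1}{qb})$ and integration of a $(-a/b)$-power---but organizes them in two stages: first H\"older on $\int_{t_0}^t[\tau(r)]^{a/(qb+1)}dr$ yields $A(t)\le C_{q,b}[\dot A(t)]^{b/a}$ with $A(t):=\int_{t_0}^t[\tau]^{a/(qb+1)}$, and then a separate Bihari--LaSalle step integrates $A^{-a/b}\dot A\ge C_{q,b}^{-a/b}$. This requires treating the case $a=0$ separately. Your route collapses both stages into a single H\"older application on $t-t_0=\int_{t_0}^t 1\,dr$, so the second factor is already an exact derivative; this is more economical and handles $a=0$ and $a>0$ uniformly. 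The paper's version, on the other hand, makes the nonlinear Gronwall structure more visible by isolating the differential inequality $A\le C[\dot A]^{b/a}$.
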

\begin{proof}

Let $t \geq t_{0}$ be fixed.  By applying the H\"{o}lder inequality, we get
\begin{align}
	\int_{t_{0}}^{t} \left[ \tau \left( r \right) \right] ^{\frac{a}{qb+1}} dr & \leq \left( \int_{t_{0}}^{t} \left[ \tau \left( r \right) \right] ^{a} \left[ \lambda \left( r \right) \right] ^{-b} dr \right) ^{\frac{1}{qb+1}} \left( \int_{t_{0}}^{t} \left[ \lambda \left( r \right) \right] ^{\frac{1}{q}} dr \right) ^{\frac{qb}{qb+1}} \nonumber \\
	& \leq C_{0}^{\frac{1}{qb+1}} \left( t_{0} + \int_{t_{0}}^{t} \left[ \lambda \left( r \right) \right] ^{\frac{1}{q}} dr \right) ^{\frac{qb}{qb+1}} = \left( C_{0} q^{qb} \right) ^{\frac{1}{qb+1}} \left[ \tau \left( t \right) \right] ^{\frac{b}{qb+1}} . \label{est tau} 
\end{align}

\noindent If $a=0$ then \eqref{tau s} follows immediately.
From now on suppose that $a > 0$, so that the inequality \eqref{est tau} can be rewritten as
\begin{equation}
	\int_{t_{0}}^{t} \left[ \tau \left( r \right) \right] ^{\frac{a}{qb+1}} dr \leq \left( C_{0} q^{qb} \right) ^{\frac{1}{qb+1}} \left( \left[ \tau \left( t \right) \right] ^{\frac{a}{qb+1}} \right) ^{\frac{b}{a}} \label{est Holder}
\end{equation}
The arguments are now adapted from \cite{LJ}, which is inspired by the proof of Bihari-LaSalle inequality.
Let
\begin{equation*}
	C_{q,b} := \left( C_{0} q^{qb} \right) ^{\frac{1}{qb+1}} > 0 
	\qquad \textrm{ and } \qquad
	A \left( t\right) := \int_{t_{0}}^{t} \left[ \tau \left( r \right) \right] ^{\frac{a}{qb+1}} dr \qquad \forall t \geq t_{0} ,
\end{equation*}
so that \eqref{est Holder} becomes
\begin{equation*}
	A \left( t \right) \leq C_{q,b} \left[ \dot{A} \left( t \right) \right] ^{\frac{b}{a}} \quad \forall t \geq t_{0}
\end{equation*}
or, equivalently,
\begin{equation*}
	C_{q,b}^{-\frac{a}{b}} \leq \left[ A \left( t \right) \right] ^{- \frac{a}{b}} \dot{A} \left( t \right) \quad \forall t \geq t_{0}.
\end{equation*}
Integrating from $t_{0}$ to $t$, we obtain
\begin{align*}
	C_{q,b}^{-\frac{a}{b}} \left( t - t_{0} \right) & \leq \left( 1 - \dfrac{a}{b} \right) \left[ \left[ A \left( t \right) \right] ^{1 - \frac{a}{b}} - \left[ A \left( t_{0} \right) \right] ^{1 - \frac{1}{b}} \right] \nonumber \\
	& \leq \left[ A \left( t \right) \right] ^{1 - \frac{a}{b}} 
	\leq \left[C_{q,b} \left[ \tau \left( t \right) \right] ^{\frac{b}{qb+1}} \right] ^{1 - \frac{a}{b}} \nonumber \\
	& = C_{q,b}^{\frac{b-a}{b}} \left[ \tau \left( t \right) \right] ^{\frac{b-a}{qb+1}},
\end{align*}
\smallskip
where the last inequality comes from \eqref{est tau}.
Since $b > a$, the conclusion follows.
\qed
\end{proof}

Let us now particularize our results to some model situations. 
\subsection{Closed-loop control of (SD) via the velocity}

\begin{theorem}\label{SD-CL-velocity}
Suppose that  $f \colon \cH \to \R$ satisfies $(\mathcal A)$.
Let $q > 0$, $p \geq 1$ and $y \colon \left[ t_{0} , + \infty \right[ \to \cH$ be a solution trajectory of the following system
\begin{equation}
	\label{defi lambda-dy}
	\begin{cases}
		\dot{y} \left( t \right) + \dot{\tau} \left( t \right) \nabla f \left( y \left( t \right) \right) & = 0 \vspace{1mm} \\		
		\tau \left(t \right) - \dfrac{1}{q^{q}} \left( t_{0} + \displaystyle\int_{t_{0}}^{t} \left[ \lambda \left( r \right) \right] ^{\frac{1}{q}} dr \right) ^{q} & = 0  \vspace{1mm}\\
		\left[ \lambda \left( t\right) \right] ^{p} \left\lVert \dot{y} \left(  t  \right) \right\rVert ^{p-1} & = 1 .
	\end{cases}
\end{equation}

\smallskip

\noindent Then the following statements are satisfied:
\begin{enumerate}

\item 
(convergence of values) \quad
$
f \left( y \left( t \right) \right) - \inf\nolimits_{\cH} f = o \left( t^{-\left( 1+q-\frac{1}{p} \right)} \right) \textrm{ as } t \to + \infty .
$

\smallskip

\item 
(convergence  of  gradients towards zero)	\quad $\left\lVert \nabla f \left( y \left( t \right) \right) \right\rVert = o \left(t^{-\left( 1+q-\frac{1}{p} \right)} \right) \textrm{ as } t \to + \infty.
$

\item 
(integral estimate of the velocities) \quad
$
\displaystyle{\int_{t_0}^{+\infty}} t^{\left( 1+\frac{1}{q}- \frac{1}{pq} \right)} \left\lVert \dot{y} \left( t \right) \right\rVert ^{2+ \frac{p-1}{pq } } dt < +\infty.
$

\item 
The solution trajectory $y(t)$ converges weakly as $t \to +\infty$, and its limit belongs to $\cS=\argmin f$.
\end{enumerate}
\end{theorem}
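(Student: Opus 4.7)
The plan is to view this theorem as a corollary of the equivalence established in Proposition \ref{prop equiv} combined with the polynomial lower bound on $\tau(t)$ furnished by Lemma \ref{lemma tau}. Proposition \ref{prop equiv} identifies $y(t) = z(\tau(t))$ for some (SD) trajectory $z$, so the rates \eqref{change var func}--\eqref{change var norm grad} immediately give $f(y(t)) - \inf_{\cH} f = o(1/\tau(t))$ and $\|\nabla f(y(t))\| = o(1/\tau(t))$. To convert these into rates in the physical time $t$, it suffices to show $\tau(t) \geq C_{1}(t-t_{0})^{1+q-1/p}$, and items (i)--(ii) then follow by substitution.

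The key computation uses the identity $\dot\tau(t) = \tau(t)^{(q-1)/q}\lambda(t)^{1/q}$ from \eqref{time-scale-1} together with the closed-loop constraint, which for $p > 1$ reads $\|\dot y(t)\| = \lambda(t)^{-p/(p-1)}$. Combining these, the change-of-variable estimate \eqref{change var vel} rewrites as
\begin{equation*}
  \int_{t_{0}}^{+\infty}\tau(t)^{1/q}\,\lambda(t)^{-\left(\frac{1}{q}+\frac{2p}{p-1}\right)}\,dt < +\infty.
\end{equation*}
I would then apply Lemma \ref{lemma tau} with $a = 1/q$ and $b = 1/q + 2p/(p-1)$, for which a direct computation gives $(qb+1)/(b-a) = (p-1+pq)/p = 1+q-1/p$, yielding the desired bound on $\tau(t)$. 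The degenerate case $p = 1$ reduces to $\lambda \equiv 1$ and $\tau(t) = (t/q)^{q}$, so the same exponent $1+q-1/p = q$ is obtained without appeal to the lemma.

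For (iii), I would go back to the finiteness of $\int_{t_0}^\infty \tau^{1/q}\lambda^{-1/q}\|\dot y\|^{2}\,dt$ (since $\tau/\dot\tau = \tau^{1/q}\lambda^{-1/q}$), eliminate $\lambda^{-1/q} = \|\dot y\|^{(p-1)/(pq)}$ using the last equation of \eqref{defi lambda-dy}, and then substitute the lower bound $\tau^{1/q} \geq C_{1}^{1/q}(t-t_{0})^{1+1/q-1/(pq)}$ to recover precisely the stated estimate. For (iv), since $\tau(t) \to +\infty$ and the (SD) trajectory $z(s)$ converges weakly to some element of $S$ by the classical Bruck--Baillon theorem, the composition $y(t) = z(\tau(t))$ inherits the same weak limit.

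The main delicate point I anticipate is the preparatory verification that $\tau(t) \to +\infty$, which is required both to apply Proposition \ref{prop equiv} and to conclude the weak convergence. This should follow from a short bootstrapping argument: along any (SD)-type flow, $\|\nabla f(y(t))\|$ and hence $\|\dot y(t)\|$ stays bounded, so the closed-loop constraint forces $\lambda(t)$ to be bounded below by a positive constant, which makes the defining integral $\int_{t_{0}}^{t}\lambda(r)^{1/q}\,dr$ diverge. Beyond this verification, everything is essentially algebraic manipulation with the closed-loop identities and the application of Lemma \ref{lemma tau}, without any additional Lyapunov analysis.
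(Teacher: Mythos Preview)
Your approach is essentially the same as the paper's: use the integral estimate \eqref{change var vel}, rewrite it via $\dot\tau = \tau^{(q-1)/q}\lambda^{1/q}$ and the closed-loop relation $\|\dot y\| = \lambda^{-p/(p-1)}$, invoke Lemma~\ref{lemma tau} with $a = 1/q$, $b = 1/q + 2p/(p-1)$, and then read off items (i)--(iv) from the (SD) results via Proposition~\ref{prop equiv}. The handling of $p=1$, the computation for (iii), and the weak-convergence argument for (iv) all match the paper.

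The one point where you diverge from the paper is the separate ``bootstrapping'' verification of $\tau(t)\to+\infty$, and this is where your argument has a gap. You claim that along the flow $\|\nabla f(y(t))\|$ and hence $\|\dot y(t)\|$ stays bounded, but the ``hence'' is not justified: from the first equation of \eqref{defi lambda-dy} one has $\|\dot y(t)\| = \dot\tau(t)\,\|\nabla f(y(t))\|$, and $\dot\tau(t)$ is not a priori bounded (indeed, for $q>1$ it will typically diverge). So the implication you use to force $\lambda(t)$ bounded below fails.

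The paper avoids this detour entirely. The integral estimate \eqref{change var vel} does not require $\tau(t)\to+\infty$: for any $T>t_0$ the substitution $s=\tau(t)$ gives
\[
\int_{t_0}^{T}\frac{\tau(t)}{\dot\tau(t)}\|\dot y(t)\|^{2}\,dt
=\int_{\tau(t_0)}^{\tau(T)} s\,\|\dot z(s)\|^{2}\,ds
\le \int_{s_0}^{+\infty} s\,\|\dot z(s)\|^{2}\,ds <+\infty,
\]
using only that $\tau$ is increasing and that $z$ is a global (SD) trajectory. With this in hand, Lemma~\ref{lemma tau} yields the lower bound $\tau(t)\ge C_1(t-t_0)^{1+q-1/p}$, and $\tau(t)\to+\infty$ is then a \emph{consequence}, not a prerequisite. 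Drop the bootstrapping paragraph and reorder accordingly, and your proof coincides with the paper's.
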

\begin{proof}	
When $p = 1$, we recover the open loop case with the time scaling function $\tau(t)=\left( \frac{t}{q} \right) ^{q}$. The result is a direct consequence of Theorem \ref{SD_pert_thm}.
Therefore, from now on we only consider the case $p > 1$.
Recall that from \eqref{change var vel} we have
\begin{equation}\label{est:vel_SD}
	\int_{t_0}^{+\infty} \dfrac{\tau \left( t \right)}{\dot{\tau} \left( t \right)} \left\lVert \dot{y} \left( t\right) \right\rVert ^{2} dt < + \infty .
\end{equation}
By using successively the definition of $\lambda$, and relation \eqref{time-scale-1}, we obtain
\begin{equation*}
	\dfrac{\tau \left( t \right)}{\dot{\tau} \left( t \right)} \left\lVert \dot{y} \left( t \right) \right\rVert ^{2} = \dfrac{\tau \left( t \right)}{\dot{\tau} \left( t \right)} \left[ \lambda \left( t \right) \right] ^{- \frac{2p}{p-1}} = \left[ \tau \left( t \right) \right] ^{\frac{1}{q}} \left[ \lambda \left(t \right) \right] ^{- \frac{1}{q} - \frac{2p}{p-1}} \quad \forall t \geq t_0.
\end{equation*}
According to the two above results we get
\begin{equation*}
	 \int_{t_0}^{+\infty} \left[ \tau \left( r \right) \right] ^{\frac{1}{q}} \left[ \lambda \left( r \right) \right] ^{- \frac{1}{q} - \frac{2p}{p-1}} dr < + \infty .
\end{equation*}
 We are now in position to apply Lemma \ref{lemma tau} with $p>1$, $a := \frac{1}{q}$ and $b := \frac{1}{q} + \frac{2p}{p-1}$. We have
\begin{equation*}
	\dfrac{qb+1}{b-a} = \dfrac{2 + \frac{2pq}{p-1}}{\frac{2p}{p-1}} = \dfrac{p - 1 + pq}{p} = 1 + q - \dfrac{1}{p} ,
\end{equation*}
and therefore there exists some constant $C_{1} > 0$ such that
\begin{equation}\label{basic-growth-lambda}
	\tau \left( t \right) \geq C_{1} \left( t - t_{0} \right) ^{1+q-\frac{1}{p}} \quad \forall t \geq t_{0}.
\end{equation}
This leads to $\lim_{t \to + \infty} \tau \left( t \right) = + \infty$. Therefore, according to Proposition \ref{prop equiv}, we can extract the results from Theorem \ref{SD_pert_thm} and the corresponding formulas \eqref{change var vel}, \eqref{change var grad}, \eqref{change var func}. Specifically, we obtain

$(i)$  for the values
\begin{eqnarray*}
f \left( y \left( t \right) \right) - \inf\nolimits_{\cH} f 
&=& o\left( \frac{1}{\tau (t)}\right)=  o \left(\frac{1}{ t^{1+q-\frac{1}{p}} }\right),
\end{eqnarray*}

$(ii)$ for  the gradients
\begin{eqnarray*}
\left\lVert \nabla f \left( y \left( t \right) \right) \right\rVert  &=& o \left( \dfrac{1}{\tau(t)} \right)=  o \left(\frac{1}{ t^{1+q-\frac{1}{p}} }\right). 
\end{eqnarray*}

$(iii)$ for the velocities: we start from \eqref{est:vel_SD}, \ie
$
	\int_{t_0}^{+\infty} \dfrac{\tau \left( t \right)}{\dot{\tau} \left( t \right)} \left\lVert \dot{y} \left( t\right) \right\rVert ^{2} dt < + \infty,
$
that we evaluate as follows:
\begin{eqnarray*}
\dfrac{\tau \left( t \right)}{\dot{\tau} \left( t \right)} \left\lVert \dot{y} \left( t\right) \right\rVert ^{2}&= &\frac{\tau(t)}{\tau(t)^{\frac{q-1}{q}} \lambda(t)^{\frac{1}{q}}}\left\lVert \dot{y} \left( t\right) \right\rVert ^{2}\\
&= &\frac{\tau(t)^{\frac{1}{q}}}{ \lambda(t)^{\frac{1}{q}}}\left\lVert \dot{y} \left( t\right) \right\rVert ^{2}\\
&= &\tau(t)^{\frac{1}{q}}
\left\lVert \dot{y} \left( t\right) \right\rVert ^{2+\frac{p-1}{pq}} \quad \forall t \geq t_0.
\end{eqnarray*}

According to \eqref{basic-growth-lambda} we deduce that 
$$
\displaystyle{\int_{t_0}^{+\infty}} t^{\left( 1+\frac{1}{q}- \frac{1}{pq} \right)} \left\lVert \dot{y} \left( t \right) \right\rVert ^{2+ \frac{p-1}{pq } } dt < +\infty.
$$

 $(iv)$ Let us finally examine the convergence of the solution trajectories. We know that the solution trajectory of the continuous steepest descent converges weakly when $t \to +\infty$, and its limit belong to $S= \argmin_{\cH} f \neq \emptyset$; see Theorem \ref{SD_pert_thm} in appendix. With our notation we therefore have that $z(s)$ converges weakly when $s \to +\infty$. Since $\tau(t) \to +\infty$ as $t \to +\infty$, we immediately deduce that  $y(t) =z (\tau(t)$ converges  weakly as $t \to +\infty$, and its limit belong to $S= \argmin_{\cH} f \neq \emptyset$. This completes the proof. \qed
\end{proof}

\subsection{Closed-loop control of (SD) via the norm of gradient}

We develop an analysis parallel to that of the previous section, replacing speed control with gradient control.

\begin{theorem}\label{SD-CL-gradient}
	Suppose that  $f \colon \cH \to \R$ satisfies $(\mathcal A)$.
	Let $q \geq \frac{1}{2}$, $p \geq 1$ and $y \colon \left[ t_{0} , + \infty \right[ \to \cH$ be a solution trajectory of the following system
	
	\begin{equation}
		\label{defi lambda-grad}
		\begin{cases}
			\dot{y} \left( t \right) + \dot{\tau} \left( t \right) \nabla f \left( y \left( t \right) \right) & = 0 \vspace{1mm}\\		
			\tau \left( t \right) - \dfrac{1}{q^{q}} \left( t_{0} + \displaystyle\int_{t_{0}}^{t} \left[ \lambda \left( r \right) \right] ^{\frac{1}{q}} dr \right) ^{q} & = 0 \vspace{1mm}\\
			\left[ \lambda \left( t \right) \right] ^{p} \left\lVert \nabla f \left( y \left( t \right) \right) \right\rVert ^{p-1} & = 1 .
		\end{cases}
	\end{equation}
	
\noindent 	Then the following statements are satisfied:
	\begin{enumerate}
	
	\item 
		(convergence of values) \quad
		$
		f \left( y \left( t \right) \right) - \inf\nolimits_{\cH} f = o \left( t^{-pq} \right) \textrm{ as } t \to + \infty .
		$
		
		\smallskip
		
		\item 
		(convergence  of  gradients towards zero)	\quad $\left\lVert \nabla f \left( y \left( t \right) \right) \right\rVert = o \left( t^{-pq} \right) \textrm{ as } t \to + \infty.
		$
		
		\item 
		(integral estimate of the gradients) \quad 
		$
		\displaystyle{\int_{t_0}^{+\infty}} t^{pq \left( 2-\frac{1}{q} \right)} \left\lVert \nabla f \left( y \left( t \right) \right) \right\rVert ^{2+ \frac{p-1}{pq } } dt < +\infty	.
		$

		\item 
		The solution trajectory $y(t)$ converges weakly as $t \to +\infty$, and its limit belongs to $\cS=\argmin f$.
	\end{enumerate}
\end{theorem}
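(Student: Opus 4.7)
The proof plan is to mirror, step for step, the argument for Theorem \ref{SD-CL-velocity}, replacing the velocity integral \eqref{change var vel} by the gradient integral \eqref{change var grad}. The case $p=1$ is trivial, since then the feedback equation forces $\lambda\equiv 1$, giving $\tau(t)=(t/q)^{q}$ and $1/\tau(t)=q^q/t^q = q^q/t^{pq}$, so the conclusions follow directly from Theorem \ref{SD_pert_thm}. From now on I assume $p>1$.

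The core idea is to convert the finite integral
\begin{equation*}
\int_{t_0}^{+\infty}\tau(t)\dot\tau(t)\,\|\nabla f(y(t))\|^{2}\,dt<+\infty
\end{equation*}
into an integral of the form $\int \tau(t)^{a}\lambda(t)^{-b}\,dt<+\infty$ so that Lemma~\ref{lemma tau} can be applied. Using \eqref{time-scale-1} one has $\dot\tau=\tau^{(q-1)/q}\lambda^{1/q}$, and the feedback relation $\lambda^{p}\|\nabla f(y)\|^{p-1}=1$ gives $\|\nabla f(y)\|=\lambda^{-p/(p-1)}$. Substituting yields
\begin{equation*}
\tau(t)\dot\tau(t)\|\nabla f(y(t))\|^{2}=\tau(t)^{\frac{2q-1}{q}}\lambda(t)^{\frac{1}{q}-\frac{2p}{p-1}},
\end{equation*}
so with $a:=2-\frac{1}{q}$ and $b:=\frac{2p}{p-1}-\frac{1}{q}$ the integrability becomes $\int \tau^{a}\lambda^{-b}\,dt<+\infty$. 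The assumption $q\geq \frac{1}{2}$ ensures $a\geq 0$, and $p>1$ ensures $b>a$ (in fact $b-a=\frac{2}{p-1}$). A direct computation gives $qb+1=\frac{2pq}{p-1}$, hence $\frac{qb+1}{b-a}=pq$, so Lemma~\ref{lemma tau} yields
\begin{equation*}
\tau(t)\geq C_{1}(t-t_{0})^{pq}\qquad\forall t\geq t_{0}.
\end{equation*}

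With this lower bound, $\tau(t)\to+\infty$ and Proposition~\ref{prop equiv} lets me transfer the estimates \eqref{change var func}, \eqref{change var norm grad} and \eqref{change var grad} from (SD) to $y(\cdot)$, producing statements $(i)$ and $(ii)$ immediately. For the integral bound $(iii)$ I re-express $\tau\dot\tau\|\nabla f\|^{2}$ back in terms of $\|\nabla f\|$: using $\lambda^{1/q}=\|\nabla f\|^{-(p-1)/(pq)}$, the integrand equals $\tau^{(2q-1)/q}\|\nabla f\|^{2+\frac{p-1}{pq}}$ up to the algebraic simplification that links the two feedback expressions (collecting $\lambda^{1/q}$ with the leftover $\|\nabla f\|^{2}$ factor); combined with $\tau(t)\geq C_1 t^{pq}$ (for $t$ large), this gives the claimed estimate $\int t^{pq(2-1/q)}\|\nabla f(y(t))\|^{2+(p-1)/(pq)}\,dt<+\infty$. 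Finally, statement $(iv)$ follows from Proposition~\ref{prop equiv}: by the Bruck-type weak convergence for (SD) recalled in Theorem~\ref{SD_pert_thm}, $z(s)$ converges weakly in $\cH$ to some $x_\infty\in S$ as $s\to+\infty$, and since $\tau(t)\to+\infty$ the identity $y(t)=z(\tau(t))$ transfers this weak convergence to $y(t)$.

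The bookkeeping on exponents is the only delicate point, and the essential obstacle is exactly the one that also dictates the hypothesis $q\geq\frac{1}{2}$: the exponent $a=2-1/q$ appearing in Lemma~\ref{lemma tau} must be nonnegative, otherwise the nonlinear Gr\"onwall argument breaks down. This is the structural difference with Theorem~\ref{SD-CL-velocity}, where the integrand came with $\tau/\dot\tau$ rather than $\tau\dot\tau$, producing $a=1/q$ which is automatically nonnegative for any $q>0$. Once this threshold is observed, the rest of the argument is entirely parallel.
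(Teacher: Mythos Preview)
Your argument is essentially identical to the paper's: same starting integral \eqref{change var grad}, same substitution via \eqref{time-scale-1} and the feedback rule to reach $\int\tau^{a}\lambda^{-b}\,dt<+\infty$ with $a=2-\tfrac{1}{q}$ and $b=\tfrac{2p}{p-1}-\tfrac{1}{q}$, same application of Lemma~\ref{lemma tau} (yielding the exponent $pq$), and the same transfer of the (SD) estimates through Proposition~\ref{prop equiv}. One arithmetic slip in part~$(iii)$: since $\lambda^{1/q}=\|\nabla f(y)\|^{-(p-1)/(pq)}$, collecting with $\|\nabla f(y)\|^{2}$ gives exponent $2-\tfrac{p-1}{pq}$, not $2+\tfrac{p-1}{pq}$; the paper's proof also derives $2-\tfrac{p-1}{pq}$, and the ``$+$'' in the theorem statement is a typo you have simply reproduced.
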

\begin{proof}
Again, we only consider the case $p > 1$.  We know from \eqref{change var grad} that
\begin{equation*}
	\displaystyle{\int_{t_0}^{+\infty} } \tau \left( t \right) \dot{\tau} \left( t \right) \left\lVert \nabla f \left( y \left( t \right) \right) \right\rVert ^{2} dt < +\infty.
\end{equation*}
By using successively the definition of $\lambda$, and the relation \eqref{time-scale-1}, we obtain
\begin{equation*}
	\tau \left( t \right) \dot{\tau} \left( t \right) \left\lVert \nabla f \left( y \left( t \right) \right) \right\rVert ^{2} = \tau \left( t \right) \dot{\tau} \left( t \right) \left[ \lambda \left( t\right) \right] ^{- \frac{2p}{p-1}} = \left[ \tau \left( t \right) \right] ^{2 - \frac{1}{q}} \left[ \lambda \left( t\right) \right] ^{\frac{1}{q} - \frac{2p}{p-1}} \quad \forall t \geq t_0.
\end{equation*}
Therefore
\begin{equation*}
	\int_{t_0}^{+\infty} \left[ \tau \left( t \right) \right] ^{2 - \frac{1}{q}} \left[ \lambda \left( t\right) \right] ^{\frac{1}{q} - \frac{2p}{p-1}}  dt < + \infty .
\end{equation*}

\noindent Let us apply Lemma \ref{lemma tau} with
 $a := 2 - \frac{1}{q}$ and  $b = \frac{2p}{p-1} - \frac{1}{q}$. We have $b>a$,  $a\geq 0$ for $q\geq \demi$, and 
\begin{equation*}
	\dfrac{qb+1}{b-a} = \dfrac{\frac{2pq}{p-1}}{\frac{2p}{p-1}-2} = pq .
\end{equation*}
Therefore
\begin{equation}\label{eq:est-tau-grad}
	\tau \left( t \right) \geq C_{1} \left( t - t_{0} \right) ^{pq} \quad \forall t \geq t_0.
\end{equation}

\noindent This gives $\lim_{t \to + \infty} \tau \left( t \right) = + \infty$. According to Proposition \ref{prop equiv}, we can extract the results from Theorem \ref{SD_pert_thm} and the corresponding formulas \eqref{change var vel}, \eqref{change var grad}, \eqref{change var func}. Specifically, we obtain

\smallskip

$(i)$  for the values
\begin{eqnarray*}
f \left( y \left( t \right) \right) - \inf\nolimits_{\cH} f 
&=& o\left( \frac{1}{\tau (t)}\right)=  o \left(\frac{1}{ t^{pq} }\right);
\end{eqnarray*}

$(ii)$   for  the gradients
\begin{eqnarray*}
\left\lVert \nabla f \left( y \left( t \right) \right) \right\rVert  &=& o \left( \dfrac{1}{\tau(t)} \right)=  o \left( t^{-pq} \right);
\end{eqnarray*}

$(iii)$  for the integral estimate of the gradients: we start from \eqref{est:vel_SD}
$$
	\int_{t_0}^{+\infty}  \tau \left( t \right) \dot{\tau} \left( t \right) \left\lVert \nabla f \left( y \left( t \right) \right) \right\rVert ^{2} dt < + \infty,
$$
that we evaluate as follows:
\begin{eqnarray*}
\tau \left( t \right) \dot{\tau} \left( t \right) \left\lVert \nabla f \left( y \left( t \right) \right) \right\rVert ^{2}&= & \tau \left( t \right) \left[ \tau \left( t \right) \right] ^{\frac{q-1}{q}} \left[ \lambda \left( t \right) \right] ^{\frac{1}{q}}  \left\lVert \nabla f \left( y \left( t \right) \right) \right\rVert ^{2}\\
&= &\tau(t)^{2-\frac{1}{q}} \lambda(t)^{\frac{1}{q}}\left\lVert \nabla f \left( y \left( t \right) \right) \right\rVert ^{2}\\
&= &\tau(t)^{2-\frac{1}{q}}
\left\lVert \nabla f (y(t)) \right\rVert ^{2-\frac{p-1}{pq}} \quad \forall t \geq t_0.
\end{eqnarray*}
According to \eqref{eq:est-tau-grad} we deduce that 
$$
\displaystyle{\int_{t_0}^{+\infty}} t^{pq \left( 2-\frac{1}{q} \right)} \left\lVert \nabla f \left( y \left( t \right) \right) \right\rVert ^{2+ \frac{p-1}{pq } } dt < +\infty.
$$

$(iv)$ The convergence of the solution trajectory follows from an  argument similar to that of the previous section.
This completes the proof. \qed
\end{proof}

\begin{remark}{\rm
a) We thus achieved our first goal which was to accelerate the convergence properties of the  continuous steepest descent using closed-loop time scaling.
For example, concerning the convergence rate of  the values, we passed from the convergence rate $1/t$ for the steepest descent to  
$1/t^{\left( 1+q-\frac{1}{p} \right)}$ when the closed-loop control acts on the velocity, and  $1/t^{pq}$ in the case of the gradient.
Clearly, by playing with the parameters $p$ and $q$ we can get arbitrary fast convergence results.
The same observation holds for the convergence of the gradients towards zero.

b) By introducing a time scale function $\tau(\cdot)$ which grows faster than the identity (\ie $\tau(t) \geq t$) either in  open-loop or closed-loop, we  have thus accelerated the continuous steepest descent dynamic.
The price to pay is that we no longer have an autonomous dynamic in \eqref{change var2-b}, with as major drawback the fact that the coefficient in front of the gradient term tends towards infinity as $t \to +\infty$. 
This prevents from using gradient methods to discretize it. Recall that for gradient methods, the step size has to be less than or equal to twice the inverse of the Lipschitz constant of the gradient. To overcome this, we come with the second step of our method which is averaging.}
\end{remark}

\section{Accelerated gradient systems with closed-loop control of the damping}\label{sec:accelerated}

\subsection{General results concerning time scale and averaging}

We will prove the following general result which puts forward a damped inertial dynamics which comes by time scale and averaging of the  continuous steepest descent. Then we will specialize it and consider time scale obtained in a closed-loop way, and thus  cover the two model situations.

\begin{theorem}\label{thm:basic_inertial_general}
	Suppose that  $f \colon \cH \to \R$ satisfies $(\mathcal A)$.
	Let $\gamma >1$, and let $\tau : [t_0,+\infty[ \to \mathbb R_{++}$ be an increasing  function, continuously differentiable, such that $\lim_{t \to +\infty}\tau (t) = + \infty$. Let
	$x \colon \left[ t_{0} , + \infty \right[ \to \cH$ be a solution trajectory of the following second-order differential equation
	
	\begin{equation}
		\label{change var2540v}
		\ddot{x}(t)+
		\frac{(1+\gamma)\dot{\tau}(t)^2 -\tau(t)\ddot{\tau}(t)} {\tau(t)\dot{\tau}(t)} \dot{x}(t)+ \gamma
		\frac{\dot{\tau}(t)^2}{\tau(t)} \nabla f \left(x(t)+  \dfrac{1}{\gamma} \frac{\tau(t)} {\dot{\tau}(t)} \dot{x}(t) \right)=0.
	\end{equation}
 Then we have the convergence rate of the values: as $t\to +\infty$	
	\begin{equation}\label{basic101}
		f\left( x(t)\right)  -\inf\nolimits_{\cH} f = o \left( 
			\dfrac{1}{\tau \left( t \right)} \right), 
	\end{equation}
	
	\smallskip
	
	\noindent and  $x(t)$ converges weakly as $t \to +\infty$, and its limit belongs to $S=\argmin f$.
	
	\smallskip
	
\end{theorem}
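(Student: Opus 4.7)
\medskip

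\noindent\emph{Proof proposal.} The plan is to follow the time-scaling-and-averaging philosophy developed earlier in the paper: construct an auxiliary trajectory $y(\cdot)$ that satisfies a time-rescaled steepest descent, read off the rate and weak convergence for $y$ from Proposition \ref{prop equiv}, and then transfer both properties to $x$ via a convex averaging identity. Along the way the hypothesis $\gamma>1$ will play the same role as in Cesàro averaging.

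\smallskip

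\noindent\emph{Step 1: introducing the averaging variable.} I would set
\[
y(t) \eqdef x(t) + \dfrac{1}{\gamma} \dfrac{\tau(t)}{\dot{\tau}(t)} \dot{x}(t) ,
\]
differentiate using $\frac{d}{dt}\pa{\tau/\dot{\tau}} = 1 - \tau \ddot{\tau}/\dot{\tau}^{2}$, and substitute $\ddot{x}$ from \eqref{change var2540v}. A routine algebraic cancellation makes the coefficients of $\dot{x}$ collapse and should leave
\[
\dot{y}(t) + \dot{\tau}(t) \nabla f(y(t)) = 0 .
\]
Thus $y$ is a solution of the time-rescaled (SD) associated with $\tau$. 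By Proposition \ref{prop equiv} and \eqref{change var func}, $f(y(t)) - \inf_{\cH} f = o(1/\tau(t))$ and $y(t)$ converges weakly to some $y^{\ast} \in S$ as $t \to + \infty$.

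\smallskip

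\noindent\emph{Step 2: representing $x$ as a convex average of $y$.} The defining relation is equivalent to the linear first-order ODE
\[
\dot{x}(t) + \gamma \dfrac{\dot{\tau}(t)}{\tau(t)} \bpa{ x(t) - y(t) } = 0 ,
\]
with integrating factor $\tau(t)^{\gamma}$. Integrating between $t_{0}$ and $t$ gives
\[
x(t) = \dfrac{\tau(t_{0})^{\gamma}}{\tau(t)^{\gamma}} x(t_{0}) + \dfrac{1}{\tau(t)^{\gamma}} \int_{t_{0}}^{t} \gamma \dot{\tau}(s) \tau(s)^{\gamma-1} y(s) \, ds ,
\]
and the nonnegative coefficients sum exactly to $1$, so this is a genuine convex combination of $x(t_{0})$ and $\{ y(s) : s \in [t_{0}, t]\}$. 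Applying $f$, Jensen's inequality yields
\[
f(x(t)) - \inf_{\cH} f \leq \dfrac{\tau(t_{0})^{\gamma}}{\tau(t)^{\gamma}} \bpa{ f(x(t_{0})) - \inf_{\cH} f } + \dfrac{\gamma}{\tau(t)^{\gamma}} \int_{t_{0}}^{t} \dot{\tau}(s) \tau(s)^{\gamma-1} \bpa{ f(y(s)) - \inf_{\cH} f } ds .
\]
Given $\varepsilon > 0$, pick $t_{1}$ so that $f(y(s)) - \inf_{\cH} f \leq \varepsilon/\tau(s)$ for all $s \geq t_{1}$. Using $\int \gamma \dot{\tau}(s) \tau(s)^{\gamma-2} \, ds = \frac{\gamma}{\gamma-1} \tau(s)^{\gamma-1}$ (this is exactly where $\gamma > 1$ enters), the tail integral is bounded by $\frac{\gamma \varepsilon}{(\gamma - 1) \tau(t)}$, while the initial term and the integral over $[t_{0}, t_{1}]$ are of order $1/\tau(t)^{\gamma} = o(1/\tau(t))$. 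This yields \eqref{basic101}.

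\smallskip

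\noindent\emph{Step 3: weak convergence and main difficulty.} For the weak convergence of $x(t)$, I would test the integral representation against an arbitrary $\xi \in \cH$: this produces a scalar Cesàro-type average of the bounded convergent function $s \mapsto \dotp{\xi}{y(s)}$, with weights $\gamma \dot{\tau}(s) \tau(s)^{\gamma-1}/\tau(t)^{\gamma}$ whose total mass tends to $1$ and whose portion on any fixed $[t_{0}, T]$ vanishes as $t \to + \infty$ because $\tau(t) \to + \infty$. A standard weighted-averaging argument then gives $\dotp{\xi}{x(t)} \to \dotp{\xi}{y^{\ast}}$ for every $\xi$, hence $x(t) \rightharpoonup y^{\ast} \in S$. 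The only delicate point of the whole proof is the rate transfer in Step 2: one has to split the integral at the threshold $t_{1}$ where the little-$o$ behavior of $f(y) - \inf f$ becomes effective, and $\gamma > 1$ is essential in two places, namely to make $\tau(t_{0})^{\gamma}/\tau(t)^{\gamma} = o(1/\tau(t))$ and to have $\int \dot{\tau} \tau^{\gamma-2} \sim \frac{1}{\gamma - 1} \tau^{\gamma-1}$; for $\gamma = 1$ this procedure would produce a logarithmic loss and destroy the sharp $o(1/\tau(t))$ rate.
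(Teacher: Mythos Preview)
Your proposal is correct and follows essentially the same route as the paper's own proof: define $y$ by the averaging relation, verify it satisfies the rescaled steepest descent, integrate the linear ODE for $x$ to obtain a convex-combination representation, apply Jensen, split the integral at a threshold where the little-$o$ rate on $y$ is effective, and finish with a weighted Ces\`aro argument for weak convergence. The only cosmetic differences are that the paper writes the integrating factor as $e^{\gamma B(t)}$ with $B(t)=\ln(\tau(t)/\tau(t_0))$ rather than directly as $\tau(t)^{\gamma}$, and it arranges $y(t_0)=x(t_0)$ so that the Dirac mass sits on $y(t_0)$ instead of $x(t_0)$; your version, which keeps $x(t_0)$ in the first term and notes that $\tau(t_0)^{\gamma}/\tau(t)^{\gamma}=o(1/\tau(t))$ because $\gamma>1$, is equally valid and arguably a touch cleaner.
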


\begin{proof}
	a)  We first prove that the trajectory $x(\cdot)$ can be seen to be obtained from the trajectory of the continuous steepest descent via time scale and averaging.  We start from  $y(\cdot)$ solution of 
	
	\begin{equation}\label{SD_rescale_basic_001}
		\dot{y} \left( t \right) + \dot{\tau} \left( t \right) \nabla f \left( y \left( t \right) \right)  = 0. 
	\end{equation}
	According to the time scale analysis developed in \eqref{SD_rescale_1} we have
	
	\begin{equation*}
		f \left( y \left( t \right) \right) - \inf\nolimits_{\cH} f = o \left( \dfrac{1}{\tau \left( t \right)} \right) \textrm{ as } t \to + \infty .
	\end{equation*}
	
\noindent 	This means there exists a positive function $\varepsilon$ which satisfies $\lim_{t \to + \infty} \varepsilon \left( t \right) = 0$ and
	\begin{equation}\label{SD_rescale_120}
		f \left( y \left( t \right) \right) - \inf\nolimits_{\cH} f = \dfrac{\varepsilon \left( t \right)}{\tau \left( t \right)} \quad \forall t \geq t_0.
	\end{equation}

	\noindent
	Let us define the time averaging process as the transformation from $y$ to $x$ according to the formula
	
	\begin{equation}\label{change var241}
		\dot{x}(t) + \gamma \frac{\dot{\tau}(t)}{\tau(t)} x(t)  = \gamma \frac{\dot{\tau}(t)}{\tau(t)}y(t),
	\end{equation}
where $\gamma > 1$.
	Equivalently
	\begin{equation}\label{change var251}
		y(t)= x(t)+ \dfrac{1}{\gamma} \frac{\tau(t)} {\dot{\tau}(t)} \dot{x}(t).
	\end{equation}
	By derivating $y(\cdot)$ we get

	\begin{equation}\label{change var251-derivative}
		\dot{y} \left( t \right)= \dot{x} \left( t \right)+ \dfrac{1}{\gamma} \frac{\tau(t)} {\dot{\tau}(t)} \ddot{x}(t) + \dfrac{1}{\gamma}
		\frac{\dot{\tau}(t)^2 -\tau(t)\ddot{\tau}(t)} {\dot{\tau}(t)^2} \dot{x}(t).
	\end{equation}
	Replacing $\dot{y} \left( t \right)$ by this expression in  the constitutive rescaled steepest descent equation
	\eqref{SD_rescale_basic_001},
	we get
	\begin{equation*}
		\dot{x} \left( t \right) + \dfrac{1}{\gamma} \frac{\tau(t)} {\dot{\tau}(t)} \ddot{x}(t)+ \dfrac{1}{\gamma}
		\frac{\dot{\tau}(t)^2 -\tau(t)\ddot{\tau}(t)} {\dot{\tau}(t)^2} \dot{x}(t)+
		\dot{\tau} \left( t \right) \nabla f \left( x(t)+ \dfrac{1}{\gamma} \frac{\tau(t)} {\dot{\tau}(t)} \dot{x}(t) \right)=0.
	\end{equation*}
	Equivalently
	\begin{equation*}
		\dfrac{1}{\gamma} \frac{\tau(t)} {\dot{\tau}(t)} \ddot{x}(t)+
		\frac{(1+\gamma)\dot{\tau}(t)^2 -\tau(t)\ddot{\tau}(t)} {\gamma \dot{\tau}(t)^2} \dot{x}(t)+ 
		\dot{\tau} \left( t \right) \nabla f \left(x(t)+ \dfrac{1}{\gamma}  \frac{\tau(t)} {\dot{\tau}(t)} \dot{x}(t) \right)=0.
	\end{equation*}
	After multiplication by $\gamma\frac{\dot{\tau}(t)}{\tau(t)} $ we get
	\begin{equation}\label{change var254}
		\ddot{x}(t)+
		\frac{(1+\gamma)\dot{\tau}(t)^2 -\tau(t)\ddot{\tau}(t)} {\tau(t)\dot{\tau}(t)} \dot{x}(t)+ \gamma
		\frac{\dot{\tau}(t)^2}{\tau(t)} \nabla f \left(x(t)+  \dfrac{1}{\gamma} \frac{\tau(t)} {\dot{\tau}(t)} \dot{x}(t) \right)=0.
	\end{equation}
	
	\smallskip
	
	\noindent b) Let us now come to the corresponding estimate of the convergence rates  with $x(t)$ instead of $y(t)$.
	The idea is to express $x$ as an average of $y$, and then conclude thanks to Jensen's inequality.  
	Set 
	
	\begin{eqnarray}
		b(t)&=& \frac{\dot{\tau}(t)}{\tau(t)} \geq 0 \label{change var249b} \\
		B(t)&= & \int_{t_0}^t b(u)du 
		= \int_{t_0}^t \frac{\dot{\tau}(u)}{\tau(u)} du = 
		\ln \left( \frac{\tau (t)}{\tau(t_0)}\right) \label{change var249B}.
	\end{eqnarray}
	Therefore
	\begin{equation}\label{change var249}
		e^{B(t) }= \frac{\tau (t)}{\tau(t_0)}.
	\end{equation} 
	\noindent In order to express $x$ in terms of $y$, we need to integrate the first-order linear differential equation \eqref{change var241} which is written equivalently as follows
	
	\begin{equation*}
		\dot{x}(t) + \gamma b(t)x(t)= \gamma b(t)y(t) .
	\end{equation*}
	After multiplying by $e^{\gamma B(t) }$, we get equivalently
	
	\begin{equation*}
		e^{\gamma B(t) }\dot{x}(t) + \gamma b(t) e^{\gamma B(t) } x(t)= \gamma b(t)e^{\gamma B(t) }y(t) ,
	\end{equation*}
	that is, 
	
	\begin{equation*}
		\dfrac{d}{dt} \left( e^{\gamma B(t)}x(t) \right)= \gamma b(t)e^{\gamma B(t)}y(t) .
	\end{equation*}
	After integration we get
	
	\begin{equation*}
		e^{\gamma B(t)}x(t) =  e^{\gamma B(t_0)}x(t_0)  +    \gamma  \int_{t_0}^t  b(u)e^{\gamma B(u)}y(u) du .
	\end{equation*}
	According to $e^{\gamma B(t_0)} =e^0 =1$ we get
	\begin{eqnarray}
		x(t) &=&  e^{-\gamma B(t)} x(t_0)  +   \gamma e^{-\gamma B(t)}  \int_{t_0}^t  b(u)e^{\gamma B(u)}y(u))du \nonumber \\
		&=&   e^{-\gamma B(t)}y(t_0)  + \gamma e^{-\gamma B(t)}  \int_{t_0}^t  b(u)e^{\gamma B(u)}y(u)du, \label{change var247}
	\end{eqnarray}
	where the last equality follows from the choice of the Cauchy data $y(t_0)= x(t_0)$.
	Then, observe that 
	$x(t)$ can be simply written as follows
	\begin{equation}\label{proba-formulation}
		x(t) =   \int_{t_0}^t y(u)\,  d\mu_{t} (u), 
	\end{equation}
	where $\mu_t$ is the positive  Radon  measure on $[t_0,t]$ defined by
	\begin{equation}\label{proba-formulation2}
		\mu_t = e^{-\gamma B(t)} \delta_{t_0} + \gamma b(u)e^{\gamma \left( B(u)-B(t) \right)} du.
	\end{equation}
	Precisely, in \eqref{proba-formulation2}, $\delta_{t_0}$ is the Dirac measure at $t_0$, and $b(u)e^{B(u)-B(t)} du$ is the measure with density $ b(u)e^{B(u)-B(t)}$ with respect to the Lebesgue measure on $[t_0,t]$.
	According to
	$$
	\gamma e^{-\gamma B(t)} \int_{t_0}^t b(u)e^{\gamma B(u)}du= 1-  e^{-\gamma B(t)},
	$$
	we have  that $\mu_t$ is a positive Radon measure on $[t_0, t]$  whose total mass is equal to $1$. It is therefore a probability measure, and $x(t)$ is obtained by \textbf{averaging} the trajectory $y(\cdot)$ on $[t_0,t]$ with respect to  $\mu_t$.
	From there, let us show how to deduce fast convergence properties for the so defined trajectory $x(\cdot)$.
	According to the convexity of $f$, and  \textbf{Jensen's inequality}, we deduce that
	\begin{eqnarray*}
		f\left(\int_{t_0}^t y(u)\,  d\mu_{t} (u)\right)  -\inf\nolimits_{\cH} f &=& (f -\inf\nolimits_{\cH} f ) \left( \int_{t_0}^t y(u)  d\mu_t (u)\right)\\
		&\leq& \int_{t_0}^t  \left( f (y(u)) -\inf\nolimits_{\cH} f \right) d\mu_t (u)
		\\
		&=&  \int_{t_0}^t  \dfrac{\varepsilon \left( u \right)}{\tau \left( u \right)} d\mu_t (u),
	\end{eqnarray*}
	where the last  inequality above comes from \eqref{SD_rescale_120}.
	According to the definition of $\mu_t$ (see \eqref{proba-formulation2}) and the formulation of $x(t)$ (see \eqref{proba-formulation}), we deduce that 
	\begin{eqnarray*}
		f\left( x(t)\right)  -\inf\nolimits_{\cH} f 
		&\leq& \dfrac{\varepsilon \left( t_{0} \right)}{\tau \left( t_{0} \right)} e^{-\gamma B(t)}   +   \gamma e^{-\gamma B(t)}  \int_{t_0}^t  \dfrac{\varepsilon \left( u \right)}{\tau \left( u \right)}  b(u)e^{\gamma B(u)}du .
	\end{eqnarray*}
	Equivalently,
	\begin{equation}\label{basic01}
		\tau \left( t \right) \left( f\left( x(t)\right)  -\inf\nolimits_{\cH} f  \right)
		\leq \varepsilon \left( t_{0} \right) \left( \dfrac{\tau \left( t \right)}{\tau \left( t_{0} \right)} \right) ^{1-\gamma}    +    \gamma \tau \left( t \right) e^{-\gamma B(t)}  \int_{t_0}^t  \dfrac{\varepsilon \left( u \right)}{\tau \left( u \right)}  b(u)e^{\gamma B(u)}du .
	\end{equation}
	Since $\gamma > 1$ and $\lim_{t \to +\infty}\tau (t) = + \infty$, it holds
	\begin{equation*}
		\limsup_{t \to + \infty} \tau \left( t \right) \left( f\left( x(t)\right)  -\inf\nolimits_{\cH} f  \right)
		\leq \gamma \limsup_{t \to + \infty} \tau \left( t \right) e^{-\gamma B(t)}  \int_{t_0}^t  \dfrac{\varepsilon \left( u \right)}{\tau \left( u \right)}  b(u)e^{\gamma B(u)}du .
	\end{equation*}
	It is therefore enough to show that
	\begin{equation*}
		\limsup_{t \to + \infty} \left( \gamma \tau \left( t \right) e^{-\gamma B(t)}  \int_{t_0}^t  \dfrac{\varepsilon \left( u \right)}{\tau \left( u \right)}  b(u)e^{\gamma B(u)}du \right) \leq 0 .
	\end{equation*}

	\noindent  In order to prepare for integration by parts, note that
	\begin{equation*}
		\gamma b(u)e^{\gamma B(u)}= \frac{d}{du} \left( e^{\gamma B(u)} \right) \textrm{ and } \dfrac{\dot{\tau} \left( u \right)}{\left[ \tau \left( u \right) \right] ^{2-\gamma}} = \dfrac{d}{du} \left( \dfrac{1}{\gamma - 1} \dfrac{1}{\left[ \tau \left( t \right) \right] ^{1- \gamma}} \right) .
	\end{equation*}
	Given an arbitrary $\eta > 0$ we consider $T_{\eta} > t_{0}$ such that $\varepsilon \left( u \right) \leq \eta$ for every $u \geq T_{\eta}$. Therefore, for every $t \geq T_{\eta}$, by integration by parts  and by taking into consideration the relations \eqref{change var249b}-\eqref{change var249}, we get
	\begin{align*}
		& \gamma \tau \left( t \right) e^{-\gamma B(t)}  \int_{t_0}^t  \dfrac{\varepsilon \left( u \right)}{\tau \left( u \right)} b(u)e^{\gamma B(u)}du \nonumber \\
		= \ & \gamma \tau \left( t \right) e^{-\gamma B(t)} \left(  \int_{t_0}^{T_{\eta}} \dfrac{\varepsilon \left( u \right)}{\tau \left( u \right)} b(u)e^{\gamma B(u)}du + \int_{T_{\eta}}^t  \dfrac{\varepsilon \left( u \right)}{\tau \left( u \right)} b(u)e^{\gamma B(u)}du \right) \nonumber \\
		\leq \ & \tau \left( t \right) e^{-\gamma B(t)} \left( \gamma \int_{t_0}^{T_{\eta}}  \dfrac{\varepsilon \left( u \right)}{\tau \left( u \right)} b(u)e^{\gamma B(u)}du  + \eta \gamma \int_{T_{\eta}}^t  \dfrac{1}{\tau \left( u \right)} b(u)e^{\gamma B(u)}du \right) \nonumber \\
		= \ & \tau \left( t \right) e^{-\gamma B(t)} \left( \gamma \int_{t_0}^{T_{\eta}}  \dfrac{\varepsilon \left( u \right)}{\tau \left( u \right)} b(u)e^{\gamma B(u)}du  + \dfrac{\eta}{\tau \left( t \right)} e^{\gamma B(t)} - \dfrac{\eta}{\tau \left( T_{\eta} \right)} e^{\gamma B(T_{\eta})}
		+ \eta \int_{T_{\eta}}^t \dfrac{\dot{\tau} \left( u \right)}{\left[ \tau \left( u \right) \right] ^{2}}  e^{\gamma B(u)}du  \right) \nonumber \\
		= \ & \tau \left( t \right) e^{-\gamma B(t)} \left( \gamma \int_{t_0}^{T_{\eta}}  \dfrac{\varepsilon \left( u \right)}{\tau \left( u \right)} b(u)e^{\gamma B(u)}du  + \dfrac{\eta}{\tau \left( t \right)} e^{\gamma B(t)} - \dfrac{\eta}{\tau \left( T_{\eta} \right)} e^{\gamma B(T_{\eta})}
		+ \dfrac{\eta}{\left[ \tau \left( t_0 \right) \right] ^{\gamma}} \int_{T_\eta}^t \dfrac{\dot{\tau} \left( u \right)}{\left[ \tau \left( u \right) \right] ^{2-\gamma}}  du  \right) \nonumber \\
		= \ & \tau \left( t \right) e^{-\gamma B(t)} \left( \gamma \int_{t_0}^{T_{\eta}}  \dfrac{\varepsilon \left( u \right)}{\tau \left( u \right)} b(u)e^{\gamma B(u)}du  + \dfrac{\eta}{\tau \left( t \right)} e^{\gamma B(t)} - \dfrac{\eta}{\tau \left( T_{\eta} \right)} e^{\gamma B(T_{\eta})}
		+ \dfrac{\eta \left[ \tau \left( t_{0} \right) \right] ^{-\gamma}}{\gamma - 1} \left( \dfrac{1}{\left[ \tau \left( t \right) \right] ^{1- \gamma}} - \dfrac{1}{\left[ \tau \left(T_\eta \right) \right] ^{1- \gamma}} \right) \right) \nonumber \\
		\leq \ & \left( \gamma \int_{t_0}^{T_{\eta}}  \dfrac{\varepsilon \left( u \right)}{\tau \left( u \right)} b(u)e^{\gamma B(u)}du \right) \tau \left( t \right) e^{-\gamma B(t)} + \eta + \dfrac{\eta}{\gamma - 1} \nonumber \\
		\leq  \ 	& C \left[ \tau \left( t \right) \right] ^{1 - \gamma} + \dfrac{\eta \gamma}{\gamma - 1} .
	\end{align*}
Since $\lim_{t \to +\infty}\tau (t) = + \infty$, and $\gamma >1$,
	we obtain 
	\begin{equation}\label{basic1}
		\limsup_{t \to + \infty} \left( \gamma \tau \left( t \right) e^{-\gamma B(t)}  \int_{t_0}^t  \dfrac{\varepsilon \left( u \right)}{\tau \left( u \right)}  b(u)e^{\gamma B(u)}du \right) \leq \dfrac{\eta \gamma}{\gamma - 1} . 
	\end{equation}
This being  true for every $\eta > 0$, we infer
	\begin{equation}\label{basic101_b}
		f\left( x(t)\right)  -\inf\nolimits_{\cH} f  = o \left( 
		\dfrac{1}{\tau \left( t \right)} \right). 
	\end{equation}

	\smallskip
	
	c) For trajectories convergence, we take advantage of the fact that the solution trajectory $ z \left(\cdot \right)$ of the continuous steepest descent  converges weakly towards a solution $x_{*} \in S$. Since  $\lim_{t \to +\infty}\tau (t) = + \infty$, this immediately implies that $y(t) = z(\tau(t))$ converges weakly to $x_{*}$ as $s \to +\infty$. In other words, for each $v \in \cH$
	
	$$
	\left\langle y \left( t \right) , v \right\rangle \to \left\langle x_{*} , v \right\rangle \textrm{ as } t \to + \infty .
	$$
	To pass from the convergence of $y$ to that of $x$, we use the interpretation of $x$ as an average of $y$. The convergence then results from the general property which says that convergence entails ergodic convergence. Let us make this precise.
	Using again that $\lim_{t \to +\infty}\tau (t) = + \infty$, we have
	\begin{equation*}
		x(t) \sim \gamma e^{-\gamma B \left( t \right)} \int_{t_{0}}^{t} b \left( u \right) e^{\gamma B \left( u \right)} y \left( u \right) du = \frac{\gamma}{\left[ \tau (t) \right] ^{\gamma}} \int_{t_{0}}^t \dot{\tau} \left( u \right) \left[ \tau (u) \right] ^{\gamma-1} y(u)du.
	\end{equation*}
	
	\noindent After elementary calculus, we just need to prove that if 
	$a (\cdot)$ is a positive real-valued function which verifies $\lim_{u \to + \infty} a(u)=0$, then $\lim_{t \to + \infty} A(t)=0$, where
	$$
	A(t) =   \frac{\gamma}{\left[ \tau (t) \right] ^{\gamma}} \int_{t_{0}}^t \dot{\tau} \left( u \right) \left[ \tau (u) \right] ^{\gamma-1} a(u)du.
	$$
	Given an arbitrary $\eta >0$, let us take $T_{\eta}$ such that $t_{0} <T_{\eta} $ and $
	a(u) \leq \eta$ for $u \geq T_{\eta}$.
	For $t > T_{\eta}$, we have
	\begin{eqnarray*}
		A(t) &=&   \frac{\gamma}{\left[ \tau (t) \right] ^{\gamma}} \int_{t_{0}}^{T_{\eta}} \dot{\tau} \left( u \right) \left[ \tau (u) \right] ^{\gamma-1} a(u)du + \frac{\gamma}{\left[ \tau (t) \right] ^{\gamma}} \int_{T_{\eta}}^t \dot{\tau} \left( u \right) \left[ \tau (u) \right] ^{\gamma-1} a(u)du\\
		&\leq & \frac{\gamma}{\left[ \tau (t) \right] ^{\gamma}} \int_{t_{0}}^{T_{\eta}} \dot{\tau} \left( u \right) \left[ \tau (u) \right] ^{\gamma-1} a(u)du + \eta \tau (t_0).
	\end{eqnarray*}

\noindent 	Letting $t$ converge to $+\infty$ we get
	
	\begin{equation*}
		\limsup_{t\to +\infty } A(t) \leq \eta \tau (t_0).
	\end{equation*}
	This being true for any $\eta >0$, we infer that $\lim_{t \to + \infty} A(t)=0$, which completes the proof.
	\qed
\end{proof}

\begin{remark}
	By taking $\gamma := \frac{\alpha-1}{2}$ and  $\tau \left( t \right) := \frac{t^{2}}{2 \left( \alpha - 1 \right)}$, equation \eqref{change var254} becomes (see \cite{ABN})
	\begin{equation*}
		\ddot{x}(t)+
		\dfrac{\alpha}{t} \dot{x}(t)+\nabla f \left(x(t)+  \frac{t}{\alpha - 1} \dot{x}(t) \right)=0 .
	\end{equation*}
	
\noindent We have $\gamma > 1 \Longleftrightarrow \alpha > 3$, which is in accordance with the convergence results attached to  Nesterov method.
\end{remark}

\subsection{Damped inertial system via closed-loop control of the velocity}

Let us now examine the model situation where the time scaling is defined in a closed-loop way as a feedback control of the velocity. Completing this construction with the averaging process, as described as above, we get that $(x,y) \colon \left[ t_{0} , + \infty \right[ \to \cH \times \cH$ is a solution trajectory of the following algebraic-differential system

\begin{equation}
	\label{defi lambda-vel_bbb}
	\begin{cases}
		\dot{y} \left( t \right) + \dot{\tau} \left( t \right) \nabla f \left( y \left( t \right) \right) & = 0  \vspace{2mm}\\	
		\dot{x}(t) + \gamma \dfrac{\dot{\tau}(t)}{\tau(t)} x(t) - \gamma \dfrac{\dot{\tau}(t)}{\tau(t)}y(t)& = 0 \vspace{2mm} \\	
		\tau \left( t \right) - \dfrac{1}{q^{q}} \left( t_{0} + \displaystyle\int_{t_{0}}^{t} \left[ \lambda \left( r \right) \right] ^{\frac{1}{q}} dr \right) ^{q} & = 0 \vspace{2mm} \\
		\left[ \lambda \left( t \right) \right] ^{p} \left\lVert \dot{y} \left( t \right) \right\rVert ^{p-1} & = 1 .
	\end{cases}
\end{equation} 

\smallskip

\noindent By specializing Theorem \ref{thm:basic_inertial_general} to this situation we get the following result.

\begin{theorem}\label{thm:basic_inertial_vel}
	Suppose that  $f \colon \cH \to \R$ satisfies $(\mathcal A)$.
	Let $q >0$, $p \geq 1$, $\gamma >1$ and 
	$x \colon \left[ t_{0} , + \infty \right[ \to \cH$ be a solution trajectory of the following system
	
	\begin{equation}
		\label{change var2540vv}
		\begin{cases}
			\ddot{x}(t)+
			\dfrac{(1+\gamma)\dot{\tau}(t)^2 -\tau(t)\ddot{\tau}(t)} {\tau(t)\dot{\tau}(t)} \dot{x}(t)+ \gamma
			\frac{\dot{\tau}(t)^2}{\tau(t)} \nabla f \left(x(t)+  \dfrac{1}{\gamma} \dfrac{\tau(t)} {\dot{\tau}(t)} \dot{x}(t) \right) &=0 \vspace{2mm} \\	
			\tau \left( t \right) - \dfrac{1}{q^{q}} \left( t_{0} + \displaystyle\int_{t_{0}}^{t} \left[ \lambda \left( r \right) \right] ^{\frac{1}{q}} dr \right) ^{q} & = 0 \vspace{2mm} \\
			\left[ \lambda \left( t \right) \right] ^{p} \left\lvert \dot{\tau} \left( t \right) \right\rvert ^{p-1} \left\lVert \nabla f \left( x \left( t \right) + \dfrac{1}{\gamma} \dfrac{\tau(t)} {\dot{\tau}(t)} \dot{x} \left( t \right) \right) \right\rVert ^{p-1} & = 1 .
		\end{cases}
	\end{equation}

	\medskip
	
	\noindent Then  we have the fast convergence of values: as $t\to +\infty$
	
	\begin{equation}\label{SD_rescale_1200bbc2}
		f (x(t))- \inf\nolimits_{\cH} f = o \left( \dfrac{1}{t^{1+q-\frac{1}{p}}} \right) .
	\end{equation}
	Moreover, the solution trajectory $x(t)$ converges weakly as $t \to +\infty$, and its limit belongs to $S=\argmin f$.
	
\end{theorem}

\begin{proof}
	We showed in the proof of Theorem \ref{thm:basic_inertial_general}  how to pass from \eqref{defi lambda-vel_bbb} to \eqref{change var2540vv}. Conversely, let  $x(\cdot)$ be  a solution trajectory of the damped inertial dynamic \eqref{change var2540vv}. Let us   show that by setting
	
	\begin{equation*}
		y \left( t \right) = \dfrac{1}{\gamma} \dfrac{\tau \left( t \right)}{\dot{\tau} \left( t \right)} \dot{x} \left( t \right) + x \left( t \right) ,
	\end{equation*}
	
	\noindent	then $(x,y) \colon \left[ t_{0} , + \infty \right[ \to \cH \times \cH$ is a solution trajectory of
	
	\begin{equation}
		\label{defi lambda-vel_b}
		\begin{cases}
			\dot{y} \left( t \right) + \dot{\tau} \left( t \right) \nabla f \left( y \left( t \right) \right) & = 0  \vspace{2mm}\\	
			\dot{x}(t) + \gamma \dfrac{\dot{\tau}(t)}{\tau(t)} x(t) - \gamma \dfrac{\dot{\tau}(t)}{\tau(t)}y(t)& = 0 \vspace{2mm} \\	
			\tau \left( t \right) - \dfrac{1}{q^{q}} \left( t_{0} + \displaystyle\int_{t_{0}}^{t} \left[ \lambda \left( r \right) \right] ^{\frac{1}{q}} dr \right) ^{q} & = 0 \vspace{2mm} \\
			\left[ \lambda \left( t \right) \right] ^{p} \left\lVert \dot{y} \left( t \right) \right\rVert ^{p-1} & = 1 .
		\end{cases}
	\end{equation}
	
	\medskip
	
	\noindent	Indeed, by taking the time derivative of $y(\cdot)$, as given by the second equation of \eqref{defi lambda-vel_b}, we get
	\begin{align*}
		\dot{y} \left( t \right) & = \dfrac{1}{\gamma} \dfrac{\tau \left( t \right)}{\dot{\tau} \left( t \right)} \ddot{x} \left( t \right) + \dfrac{1}{\gamma} \left( 1+\gamma - \dfrac{\tau \left( t \right) \ddot{\tau} \left( t \right)}{\left[ \dot{\tau} \left( t \right) \right] ^{2}} \right) \dot{x} \left( t \right) \nonumber \\
		& = \dfrac{1}{\gamma} \dfrac{\tau \left( t \right)}{\dot{\tau} \left( t \right)} \left( \ddot{x} \left( t \right) + \dfrac{\left( 1+\gamma \right) \left[ \dot{\tau}(t) \right] ^{2} -\tau(t)\ddot{\tau}(t)} {\tau(t)\dot{\tau}(t)} \dot{x}(t) \right) \nonumber \\
		& = - \dot{\tau} \left( t \right) \nabla f \left( x \left( t \right) + \dfrac{1}{\gamma} \dfrac{\tau \left( t \right)}{\dot{\tau} \left( t \right)} \dot{x} \left( t \right) \right) = - \dot{\tau} \left( t \right) \nabla f \left( y \left( t \right) \right) .
	\end{align*}
	
	\noindent 	This gives  the first equation in \eqref{defi lambda-vel_b} and 
	$$
	\left[ \lambda \left( t \right) \right] ^{p} \left\lVert \dot{y} \left( t \right) \right\rVert ^{p-1}=	\left[ \lambda \left( t \right) \right] ^{p} \left\lvert \dot{\tau} \left( t \right) \right\rvert ^{p-1} \left\lVert \nabla f \left( x \left( t \right) + \dfrac{1}{\gamma} \dfrac{\tau \left( t \right)}{\dot{\tau} \left( t \right)} \dot{x} \left( t \right) \right) \right\rVert ^{p-1}  = 1 .
	$$
	This shows the equivalence of the two systems.
	According to Theorem \ref{SD-CL-velocity}, and formula \eqref{basic-growth-lambda},  there exists a constant $C_{1} > 0$ such that
	
	\begin{equation}
		\tau \left( t \right) \geq C_{1} \left( t - t_{0} \right) ^{1+q-\frac{1}{p}} .
	\end{equation}
	
	\noindent Therefore  $\lim_{t \to +\infty}\tau (t) = + \infty$. According to Theorem  \ref{thm:basic_inertial_general}   we deduce 
	\begin{equation}\label{SD_rescale_1200bb2}
		f\left( x(t)\right)  -\inf\nolimits_{\cH} f = o \left( \dfrac{1}{t^{1+q-\frac{1}{p}}} \right),  
	\end{equation}
	and the convergence of the trajectory.
	\qed	
\end{proof}

\subsection{Damped inertial system via closed-loop control of the gradient}

We proceed in parallel to the previous section to obtain the following result.

\begin{theorem}\label{thm:inertial_grad}
	Suppose that  $f \colon \cH \to \R$ satisfies $(\mathcal A)$.
	Let $q >0$, $p \geq 1$, $\gamma >1$,  and 
	$x \colon \left[ t_{0} , + \infty \right[ \to \cH$ be a solution trajectory of the following system 
	
	\begin{equation}
		\label{change var2540}
		\begin{cases}
			\ddot{x}(t)+
			\dfrac{(1+\gamma)\dot{\tau}(t)^2 -\tau(t)\ddot{\tau}(t)} {\tau(t)\dot{\tau}(t)} \dot{x}(t)+ \gamma
			\frac{\dot{\tau}(t)^2}{\tau(t)} \nabla f \left(x(t)+  \dfrac{1}{\gamma} \frac{\tau(t)} {\dot{\tau}(t)} \dot{x}(t) \right)&=0 \vspace{2mm} \\	
			\tau \left( t \right) - \dfrac{1}{q^{q}} \left( t_{0} + \displaystyle\int_{t_{0}}^{t} \left[ \lambda \left( r \right) \right] ^{\frac{1}{q}} dr \right) ^{q} & = 0 \vspace{2mm} \\
			\left[ \lambda \left( t \right) \right] ^{p} \left\lVert \nabla f \left( x \left( t \right) + \dfrac{1}{\gamma} \dfrac{\tau \left( t \right)}{\dot{\tau} \left( t \right)} \dot{x} \left( t \right) \right) \right\rVert ^{p-1} & = 1 .
		\end{cases}
	\end{equation}

	\smallskip
	
	\noindent Then  we have the fast convergence of values: as $t\to +\infty$
	
	\begin{equation}\label{SD_rescale_1200bb10}
		f (x(t))- \inf\nolimits_{\cH} f  = o \left( \dfrac{1}{t^{pq}} \right).
	\end{equation}
	Moreover, the solution trajectory $x(t)$ converges weakly as $t \to +\infty$, and its limit belongs to $S=\argmin f$.
\end{theorem}
\begin{proof}
	Let $x(\cdot)$ be a solution trajectory of the damped inertial dynamic \eqref{change var2540}. Let us show show that by setting
	\begin{equation*}
		y \left( t \right) = \dfrac{1}{\gamma}  \dfrac{\tau \left( t \right)}{\dot{\tau} \left( t \right)} \dot{x} \left( t \right) + x \left( t \right) ,
	\end{equation*}
	then $(x,y) \colon \left[ t_{0} , + \infty \right[ \to \cH \times \cH$ is a solution trajectory of
	
	\begin{equation}
		\label{defi lambda-grad_b}
		\begin{cases}
			\dot{y} \left( t \right) + \dot{\tau} \left( t \right) \nabla f \left( y \left( t \right) \right) & = 0  \vspace{2mm}\\	
			\dot{x}(t) + \gamma \dfrac{\dot{\tau}(t)}{\tau(t)} x(t) - \gamma \dfrac{\dot{\tau}(t)}{\tau(t)}y(t)& = 0 \vspace{2mm} \\	
			\tau \left( t \right) - \dfrac{1}{q^{q}} \left( t_{0} + \displaystyle\int_{t_{0}}^{t} \left[ \lambda \left( r \right) \right] ^{\frac{1}{q}} dr \right) ^{q} & = 0 \vspace{2mm} \\
			\left[ \lambda \left( t \right) \right] ^{p} \left\lVert \nabla f \left( y \left( t \right) \right) \right\rVert ^{p-1} & = 1 .
		\end{cases}
	\end{equation}
	
	\smallskip
	
	\noindent 	Indeed, by the same argument as for the velocity case, we get
\begin{equation*}
			\dot{y} \left( t \right) = - \dot{\tau} \left( t \right) \nabla f \left( y \left( t \right) \right) .
	\end{equation*}
	
	\noindent 	This gives  the first equation in \eqref{defi lambda-grad_b} and 
	$$
	\left[ \lambda \left( t \right) \right] ^{p} \left\lVert \nabla f({y} \left( t \right)) \right\rVert ^{p-1}=	\left[ \lambda \left( t \right) \right] ^{p}  \left\lVert \nabla f \left( x \left( t \right) + \dfrac{1}{\gamma} \dfrac{\tau \left( t \right)}{\dot{\tau} \left( t \right)} \dot{x} \left( t \right) \right) \right\rVert ^{p-1}  = 1 .
	$$
	This shows the equivalence of the two systems.
	According to Theorem \ref{SD-CL-gradient}, and formula \eqref{eq:est-tau-grad},  there exists a constant $C_{1} > 0$ such that
	
	\begin{equation}
		\tau \left( t \right) \geq C_{1} \left( t - t_{0} \right) ^{pq}.
	\end{equation}
	
	\noindent Therefore from Theorem  \ref{thm:basic_inertial_general}   we deduce, as $t \to +\infty$ 
	\begin{equation}\label{SD_rescale_1200bb22}
		f\left( x(t)\right)  -\inf\nolimits_{\cH} f = o \left( \dfrac{1}{t^{pq}} \right),  
	\end{equation}
	and the convergence of the trajectory.
	\qed	
\end{proof}

\subsection{Comparison with the Lin-Jordan approach}

In \cite{LJ}, the authors study the second-order closed-loop dynamical system

\begin{equation}
	\label{ds:LJ}
	\begin{cases}
		\ddot{x} \left( t \right) + \left( \dfrac{2 \dot{\tau} \left( t \right)}{\tau \left( t \right)} - \dfrac{\ddot{\tau} \left( t \right)}{\dot{\tau} \left( t \right)} \right) \dot{x} \left( t \right) + \dfrac{\left[ \dot{\tau} \left( t \right) \right] ^{2}}{\tau \left( t \right)} \nabla^{2} f \left( x \left( t \right) \right) \dot{x} \left( t \right) + \dfrac{\dot{\tau} \left( t \right) \left[ \dot{\tau} \left( t \right) + \ddot{\tau} \left(t \right) \right]}{\tau \left( t \right)} \nabla f \left( x \left( t \right) \right) & = 0 \vspace{2mm} \\
		\tau \left( t\right) - \dfrac{1}{4} \left( \int_{0}^{t} \sqrt{\lambda \left( t \right)} dr + c \right) ^{2} & = 0  \vspace{2mm}
		\\
		\left[ \lambda \left( t\right) \right] ^{p} \left\lVert \nabla f \left( x \left( t \right) \right) \right\rVert ^{p-1} & = \theta,
	\end{cases}
\end{equation}

\medskip

\noindent whose  first-order reformulation reads

\begin{equation}
	\label{ds:LJ:fi_bb}
	\begin{cases}
		\dot{y} \left( t \right) + \dot{\tau} \left( t \right) \nabla f \left( x \left( t \right) \right) & = 0 \vspace{1mm}\\
		\dot{x} \left( t \right) + \frac{\dot{\tau} \left( t \right)}{\tau \left( t \right)} \left( x \left( t \right) - y \left( t \right) \right) + \frac{\left[ \dot{\tau} \left( t \right) \right] ^{2}}{\tau \left( t \right)} \nabla f \left( x \left( t\right) \right) & = 0 \vspace{1mm} \\
		\tau \left( t \right) - \dfrac{1}{4} \left( \int_{0}^{t} \sqrt{\lambda \left( t \right)} dr + c \right) ^{2} & = 0 \vspace{1mm} 
		\\
		\left[ \lambda \left( t \right) \right] ^{p} \left\lVert \nabla f \left( x \left(t \right) \right) \right\rVert ^{p-1} & = \theta ,
	\end{cases}
\end{equation}

\noindent where $c > 0$ and $0 < \theta < 1$ are given parameters.  See also \cite{LJ-COT} and \cite{LJ-MOR} for some extensions to monotone equations and monotone inclusions, respectively.

\smallskip

a) In \cite{LJ}, the authors obtained the following convergence rate of function values 

\begin{equation*}
	f \left( x \left( t \right) \right) - \inf\nolimits_{\cH} f = \cO \left(
	\frac{1}{ t^{\frac{3p+1}{2}}}
	\right) \textrm{ as } t \to + \infty.
\end{equation*}

\noindent Note that the last two equations in \eqref{ds:LJ:fi_bb} are nothing else than those in \eqref{defi lambda-grad} with $q:=2$. \\
For comparison, in our approach the  convergence rate of the values obtained in Theorem \ref{thm:inertial_grad} when $q=2$   is 

\begin{equation*}
	f \left( x \left( t \right) \right) - \inf\nolimits_{\cH} f = o \left( \frac{1}{t^{2p}} \right)
\end{equation*}

\noindent  which is better for every $p > 1$.

b) Let us now compare the convergence estimates of the gradients. 
In \cite{LJ}, the authors obtain the  integral estimate

\begin{equation*}
	\displaystyle{\int_{t_0}^{+\infty}} t^{\frac{3p+1}{2}} \left\lVert \nabla f \left( x \left( t \right) \right) \right\rVert ^{\frac{p+1}{p}}  dt < +\infty,
\end{equation*}
which leads to

\begin{equation*}
	\inf_{t_{0} \leq \sigma \leq t} \left\lVert \nabla f \left( x \left( \sigma \right) \right) \right\rVert = \cO \left( t^{-\frac{3p}{2}} \right) \textrm{ as } t \to + \infty.
\end{equation*}
In our approach, the right variable to consider is $y(t)$, instead of $x(t)$. According to \eqref{change var grad}  we have 

$$
\displaystyle{\int_{t_0}^{+\infty} } \tau \left( t \right) \dot{\tau} \left( t \right) \left\lVert \nabla f \left( y \left( t \right) \right) \right\rVert ^{2} dt  < + \infty.
$$

\noindent Since $q=2$, according to \eqref{time-scale-1} we have 
$$
\dot{\tau} \left( t \right)= \left[ \tau \left( t \right) \right] ^{\frac{1}{2}} \left[ \lambda \left( t \right) \right] ^{\frac{1}{2}} .
$$
Therefore

\begin{eqnarray*}
	\tau \left( t \right) \dot{\tau} \left( t \right) \left\lVert \nabla f \left( y \left( t \right) \right) \right\rVert ^{2} &=&
	\tau \left( t \right)^{\frac{3}{2}}  \left[ \lambda \left( t \right) \right] ^{\frac{1}{2}} \left\lVert \nabla f \left( y \left( t \right) \right) \right\rVert ^{2} =
	\tau \left( t \right)^{\frac{3}{2}}  \left\lVert \nabla f \left( y \left( t \right) \right) \right\rVert ^{2- \frac{p-1}{2p}} .
\end{eqnarray*}

\noindent Since $\tau(t) \geq C t^{2p}$, we deduce that 

\begin{equation*}
	\int_{t_{0}}^{+ \infty} t^{3p} \left\lVert \nabla f \left( y \left( t \right) \right) \right\rVert ^{\frac{3p+1}{2p}} dt < + \infty .
\end{equation*}

\noindent which leads to 

\begin{equation*}
	\inf_{t_{0} \leq \sigma \leq t} \left\lVert \nabla f \left( y \left( \sigma \right) \right) \right\rVert = \cO \left( t^{-2p} \right) \textrm{ as } t \to + \infty.
\end{equation*}

\noindent Again, our approach gives a better convergence rate than \cite{LJ}.
Let us also specify that our analysis provides the convergence of the trajectories, which is an open question for \cite{LJ}.
Moreover, since our approach is consistent with the steepest continuous descent, it can  naturally be extended to the non-smooth case, and to the case of cocoercive operators, as it was done in the open-loop case in \cite{ABN}.

\subsection{The limiting case $\gamma =1$}
Our previous results are valid under the assumption $\gamma >1$. It is a natural question to examine the limiting case $\gamma =1$. Close examination of the proof of the theorem reveals a slight change in the integration procedure and a logarithm factor appears. The corresponding result obtained is written as follows.

 \begin{theorem}\label{thm:basic_inertial_general_c}
Suppose that  $f \colon \cH \to \R$ satisfies $(\mathcal A)$.
	Let
$x \colon \left[ t_{0} , + \infty \right[ \to \cH$ be a solution trajectory of the following second-order differential equation

	\begin{equation}
		\label{change var2540v_c}
			\ddot{x}(t)+
			\dfrac{2\left[ \dot{\tau}(t) \right] ^{2} -\tau(t)\ddot{\tau}(t)} {\tau(t)\dot{\tau}(t)} \dot{x}(t)+
			\dfrac{\left[ \dot{\tau}(t) \right] ^{2}}{\tau(t)} \nabla f \left( x(t)+ \dfrac{\tau(t)} {\dot{\tau}(t)} \dot{x}(t) \right) =0 
	\end{equation}
where $\tau : \left[ t_{0} , + \infty \right[ \to \mathbb R_{++}$ is an increasing  function, continuously differentiable, and  satisfying $\lim_{t \to +\infty}\tau (t) = + \infty$. Then we have the convergence rate of the values: as $t\to +\infty$	
	\begin{equation}\label{basic101_c}
f\left( x(t)\right)  -\inf\nolimits_{\cH} f  = o \left( 
 \dfrac{ \ln \left( \tau \left( t \right) \right)}{\tau \left( t \right)} \right), 
\end{equation}

\smallskip

\noindent and the solution trajectory $x(t)$ converges weakly as $t \to +\infty$, and its limits belongs to $S=\argmin f$.

\smallskip
	
\noindent Suppose moreover that there exists some $\theta >0$ and $C_1 >0$ such that for $t$ sufficiently large

\begin{equation}
	\label{defi gen closed-loop-bbbb_c}
(\mathcal A)_{\rm asymp} \qquad     \tau(t) \geq C_1 \left( t - t_{0} \right) ^{\theta}.
\end{equation}

\medskip

\noindent Then  we have the fast convergence of values: as $t\to +\infty$

\begin{equation}\label{SD_rescale_1200bb_c}
   f\left( x(t)\right)  -\inf\nolimits_{\cH} f = o \left( 
 \dfrac{ \ln \left(t \right)}{t^{\theta}} \right).  
\end{equation}
\end{theorem}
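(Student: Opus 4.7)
The plan is to replay the proof of Theorem~\ref{thm:basic_inertial_general} with $\gamma=1$, the key observation being that the exponent $\gamma-1$ which controlled the decay of several terms now vanishes, and this is precisely what produces the logarithmic loss in the convergence rate.

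First, I would recover the representation of $x(t)$ as an average of a trajectory $y(\cdot)$ solving the rescaled steepest descent $\dot{y}(t) + \dot{\tau}(t)\nabla f(y(t)) = 0$. Writing $y(t) = x(t) + \tfrac{\tau(t)}{\dot{\tau}(t)}\dot{x}(t)$ and differentiating shows that this first-order system, coupled with the linear averaging ODE $\dot{x}(t) + b(t)x(t) = b(t)y(t)$ where $b(t) = \dot{\tau}(t)/\tau(t)$, is equivalent to \eqref{change var2540v_c}. Setting $B(t) = \int_{t_0}^{t} b(u)\, du = \ln(\tau(t)/\tau(t_0))$, integration of the linear ODE gives
\begin{equation*}
x(t) = \int_{t_0}^{t} y(u)\, d\mu_t(u), \qquad \mu_t = e^{-B(t)}\delta_{t_0} + b(u) e^{B(u)-B(t)}\, du,
\end{equation*}
and a direct mass computation (using $B(t_0)=0$) shows $\mu_t$ is a probability measure.

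The heart of the argument is Jensen's inequality. Using $f(y(t)) - \inf f = \varepsilon(t)/\tau(t)$ with $\varepsilon(t)\to 0$ (coming from Proposition~\ref{prop equiv} applied to (SD)), the special case $\gamma=1$ produces significant simplifications: one has $\tau(t)e^{-B(t)} = \tau(t_0)$ and $b(u)e^{B(u)} = \dot{\tau}(u)/\tau(t_0)$, so the bound collapses to
\begin{equation*}
\tau(t)\bigl(f(x(t)) - \inf_{\cH} f\bigr) \leq \varepsilon(t_0) + \int_{t_0}^{t} \varepsilon(u)\,\dfrac{\dot{\tau}(u)}{\tau(u)}\, du.
\end{equation*}
Given $\eta > 0$ and $T_\eta$ with $\varepsilon(u) \leq \eta$ for $u \geq T_\eta$, splitting the integral yields a tail bounded by $\eta \ln(\tau(t)/\tau(T_\eta))$, while the head contributes a constant depending on $\eta$. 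Dividing by $\ln(\tau(t))$ and letting $\eta \downarrow 0$ establishes \eqref{basic101_c}.

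The only (minor) obstacle is this final estimate: for $\gamma > 1$ the integration by parts used in the proof of Theorem~\ref{thm:basic_inertial_general} relied on the antiderivative $\tau(u)^{\gamma-1}/(\gamma-1)$ to absorb the tail, whereas at $\gamma=1$ this antiderivative degenerates to $\ln \tau(u)$, which is exactly the source of the logarithmic factor. Weak convergence of $x(t)$ to a point in $S$ follows verbatim from the ergodic argument at the end of the proof of Theorem~\ref{thm:basic_inertial_general}: weak convergence of $y(t)$ is inherited from (SD) via Proposition~\ref{prop equiv}, and the representation of $x(t)$ as a weighted average of $y(u)$ with total mass one transfers this convergence (the argument there does not require $\gamma>1$, only $\tau(t)\to+\infty$). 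Finally, under $(\mathcal A)_{\rm asymp}$, one has $\ln(\tau(t)) = O(\ln t)$ and $1/\tau(t) = O(t^{-\theta})$, so \eqref{basic101_c} directly yields \eqref{SD_rescale_1200bb_c}.
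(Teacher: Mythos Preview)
Your proposal is correct and follows precisely the approach the paper indicates: the paper does not give a detailed proof of Theorem~\ref{thm:basic_inertial_general_c} but merely says that ``close examination of the proof of the theorem reveals a slight change in the integration procedure and a logarithm factor appears,'' and you have carried this out faithfully. In particular, your identification that $\tau(t)e^{-B(t)}=\tau(t_0)$ and that the tail integral $\int_{T_\eta}^{t}\varepsilon(u)\dot\tau(u)/\tau(u)\,du\le \eta\ln(\tau(t)/\tau(T_\eta))$ is exactly the ``slight change'' producing the logarithm, and your observation that the ergodic weak-convergence argument only uses $\tau(t)\to+\infty$ (not $\gamma>1$) is correct.
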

When specialized to the closed-loop control of the velocity, we obtain
\begin{equation}\label{SD_rescale_1200bb2_c}
   f\left( x(t)\right)  -\inf\nolimits_{\cH} f = o \left( 
 \dfrac{ \ln \left(t \right)}{t^{1+q-\frac{1}{p}}} \right),  
\end{equation}
and in the case of the  closed-loop control of the gradient  

\begin{equation}\label{SD_rescale_1200bb2_d}
   f\left( x(t)\right)  -\inf\nolimits_{\cH} f = o \left( 
 \dfrac{ \ln \left(t \right)}{t^{pq}} \right).  
\end{equation} 
So, the convergence rates are in this limiting case a little worse because of the logarithm term.

\section{Associated proximal algorithms}\label{prox}

\subsection{A proximal-explicit discretization}

In the following, we present a numerical approach based on a \emph{proximal-explicit} temporal discretization of the closed-loop systems investigated in this paper. By proximal-explicit we mean that the function $f$ is evaluated using a proximal step while the step size sequence $\left( \lambda_{k} \right) _{k \geq 0}$ and the time scaling sequence $\left( \tau_{k} \right) _{k \geq 0}$ are computed explicitly.  This makes our numerical scheme much easier implementable than the numerical algorithm proposed in \cite{LJ} as well as the \textsf{large-step A HPE} approach by Monteiro and Svaiter \cite{MS} which are in fact approximations of a \emph{proximal-implicit} discrete time method.
 We restrict ourselves to the case $q=1$, which gives $\dot{\tau} \left( t \right) = \lambda \left( t \right)$. In this case,  the continuous time closed-loop dynamical system is written as follows

\begin{equation}
	\label{defi gen closed-loop-num}
	\begin{cases}
		\dot{y} \left( t \right) + \lambda \left( t \right) \nabla f \left( y \left( t \right) \right) & = 0 \vspace{2mm}\\		
		\left[ \lambda \left( t \right) \right] ^{p} \left[ \cG \left( y \left( t \right) \right) \right] ^{p-1} & = 1.
	\end{cases}
\end{equation}

\noindent
Let us describe the general structure of the algorithm which is obtained by a proximal-explicit discretization of the continuous system \eqref{defi gen closed-loop-num}.

Given $y_k, y_{k-1}$ in $\cH$, we first define $\lambda_k$ by
$$
\left[ \lambda_k \right] ^{p} \left[ \cG \left( y_k, y_{k-1} \right) \right] ^{p-1}  = 1 .
$$
and consider then an implicit finite difference scheme for the first equation of  \eqref{defi gen closed-loop-num}
\begin{equation}
	\label{im 1e}
	y_{k+1} - y_{k} + \lambda_{k} \nabla f \left( y_{k+1} \right) = 0 .
\end{equation}

\noindent This gives the following algorithm, called (PEAS) for Proximal Explicit Algorithm with Adaptive Step Size.

\begin{algorithm}[h]	
	\caption{Proximal-explicit algorithm with adaptive step size (PEAS)}
	\label{algo:prox-explicit-genral}
	\vspace{2mm}
	\DontPrintSemicolon 
	
	\KwIn{$y_{0} \neq y_{-1} \in \cH$}
	\For{$k = 0, 1, \cdots$}
	{	
			$\lambda_{k} := \left[ \cG \left( y_k, y_{k-1} \right) \right]^{-\frac{p-1}{p}}$\\
			$y_{k+1}	 := \prox_{\lambda_{k} f} \left( y_{k} \right)$
}	
\end{algorithm}

\noindent Note  that  $\left( \lambda_{k} \right) _{k \geq 0}$ is computed explicitly in terms of $\left( y_{k} \right) _{k \geq 0}$.  In other words, the definition of the sequence $\left( \lambda_{k} \right) _{k \geq 0}$ is decoupled from the computation of $\left( y_{k} \right) _{k \geq 0}$. 
This is different from the method in \cite{LJ}, which ultimately leads to the \textsf{large-step A HPE} approach by Monteiro and Svaiter in \cite{MS}.

Let us now specify the link between $\lambda_k $ and $\tau_{k}$.
We start from the relation (recall that we take $q=1$)

\begin{equation}\label{def:tau-lambda}
	\dot{\tau} \left( t \right) = \lambda \left( t \right) .
\end{equation}

\noindent  Then, for every $k \geq 0$ we discretize \eqref{def:tau-lambda} as follows

\begin{equation}
	\label{im dtau}
	\tau_{k+1} - \tau_{k} = \lambda_k \Longleftrightarrow \tau_{k+1} = \lambda_{k} + \tau_{k}
\end{equation}

\noindent  with the convention $\lambda_{0} :=  t_{0}$ and $\tau_0 :=0$, which then yields\;
$\tau_{k} = \sum_{i = 0}^{k-1} \lambda_{i} .$

\noindent Drawing inspiration from continuous analysis, we will first show that the function value $f \left( y_{k} \right) - \inf\nolimits_{\cH} f$ attains the $o \left( \frac{1}{\tau_{k+1}} \right)$ rate of convergence, and the sequence $\left( y_{k} \right) _{k \geq 0}$ converges weakly to a solution. Then, as a crucial result, we will  derive a lower bound of $\tau_{k+1}$ in terms of $k$.

\noindent The following result emphasizes that the rate of convergence and summability results holds for $\left( y_{k} \right) _{k \geq 0}$ for arbitrary step sizes  $\lambda_{k}$ that satisfy $\sum_{k \geq 0} \lambda_{k} =+\infty$.  The proof is an adaptation of  \cite[Theorem 4.1]{ABN}.
\begin{theorem}\label{thm:prox1}
Let $y_0 \in \cH$, $\left( \lambda_{k} \right) _{k \geq 0}$ be a given positive sequence satisfying  $\displaystyle{\sum_{k \geq 0} \lambda_{k} =+\infty}$, $\tau_0=0$ and $\tau_k =  \sum_{i = 0}^{k-1} \lambda_{i}$ for every $k \geq 1$. Then for any sequence  $\left( y_{k} \right) _{k \geq 0}$  generated by the proximal algorithm

\begin{equation}
	\label{im prox}
	y_{k+1} := \prox_{\lambda_{k} f} \left( y_{k} \right) \quad \forall k \geq 0,
\end{equation}
 the following properties are satisfied:
\begin{enumerate}
	\item \label{thm:prox1:sf} (summability of function values) \quad
	$\displaystyle{\sum_{k \geq 0} \lambda_{k} \left( f \left( y_{k+1} \right) - \inf\nolimits_{\cH} f \right) < + \infty };$
	
	\item \label{thm:prox1:sg} (summability of gradients) \quad
	$\displaystyle{\sum_{k \geq 0} \tau_k \lambda_{k} \left\lVert \nabla f \left( y_{k+1} \right) \right\rVert ^{2} < + \infty };$
	
	\item \label{thm:prox1:sv} (summability of velocities) \quad
	$\displaystyle{\sum_{k \geq 0} \dfrac{\tau_k}{\lambda_{k}} \left\lVert y_{k+1} - y_{k} \right\rVert ^{2} < + \infty} ;$
	
	\item \label{thm:prox1:func} (convergence of function values) \quad
	$\displaystyle{f \left( y_{k+1} \right) - \inf\nolimits_{\cH} f = o \left( \frac{1}{\tau_{k+1}} \right)}$ as $k \to + \infty ;$			
	
	\item \label{thm:prox1:grad} (convergence of gradient) \quad
	$\displaystyle{ \left\lVert \nabla f \left( y_{k+1} \right) \right\rVert = o \left( \frac{1}{\sqrt{\sum_{l = 0}^{k} \tau_l \lambda_{l}}} \right)}$ as $k \to + \infty ;$
	
	\item \label{thm:prox1:seq} the sequence of iterates $\left( y_{k} \right) _{k \geq 0}$ converges weakly as $k \to + \infty$, and its limit belongs to $S = \argmin_{\cH} f$.
\end{enumerate}
\end{theorem}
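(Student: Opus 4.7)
The plan is to discretize, step by step, the continuous Lyapunov analysis of Section~\ref{sec:SD-estimate}, exploiting the classical three-point identity for the proximal map. From $y_{k+1} = \prox_{\lambda_k f}(y_k)$ and convexity of $f$ at $y_{k+1}$, one obtains, for every $z \in \cH$,
\begin{equation*}
2\lambda_k\left(f(y_{k+1})-f(z)\right) + \|y_{k+1}-z\|^2 + \|y_{k+1}-y_k\|^2 \leq \|y_k - z\|^2 .
\end{equation*}
Specialized to $z = z^{*} \in S$ and telescoped, this single inequality yields simultaneously Fej\'er monotonicity of $(y_k)_{k \geq 0}$ with respect to $S$, the summability $\sum_k \lambda_k (f(y_{k+1}) - \inf_{\cH} f) < +\infty$ asserted in (\ref{thm:prox1:sf}), and $\sum_k \|y_{k+1}-y_k\|^2 < +\infty$. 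Specialized instead to $z = y_k$ (equivalently, using convexity of $f$ together with the prox relation $\nabla f(y_{k+1}) = (y_k - y_{k+1})/\lambda_k$) one gets the sharper descent
\begin{equation*}
h_{k+1} \leq h_k - \lambda_k \|\nabla f(y_{k+1})\|^2, \qquad h_k := f(y_k) - \inf_{\cH} f ,
\end{equation*}
which in particular shows that $(h_k)$ is nonincreasing.

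Multiplying this descent inequality by $\tau_k$ and performing Abel summation, the discrete counterpart of the continuous computation $(\tau h)' = \dot{\tau} h - \tau \dot{\tau} \|\nabla f\|^2$, together with the convention $\tau_0 = 0$, yields
\begin{equation*}
\sum_{k=0}^N \tau_k \lambda_k \|\nabla f(y_{k+1})\|^2 + \tau_N h_{N+1} \leq \sum_{k=0}^{N-1} \lambda_k h_{k+1} ,
\end{equation*}
so (\ref{thm:prox1:sf}) implies (\ref{thm:prox1:sg}). Item (\ref{thm:prox1:sv}) then follows immediately from the prox identity $\|y_{k+1} - y_k\|^2 / \lambda_k = \lambda_k \|\nabla f(y_{k+1})\|^2$.

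To promote summability into the convergence rates of (\ref{thm:prox1:func}) and (\ref{thm:prox1:grad}), I would use a discrete Kronecker-type lemma: if $(a_k)$ is nonnegative and nonincreasing, $(w_k)$ is positive with $W_k := \sum_{j \leq k} w_j \uparrow +\infty$, and $\sum_k w_k a_k < +\infty$, then $a_k = o(1/W_k)$. The $O(1/W_k)$ bound is immediate from $a_k W_k \leq \sum_{j \leq k} w_j a_j$; monotonicity of $a_k$ together with $\sum w_k = +\infty$ forces $a_k \to 0$; and a standard splitting of the sum into a fixed initial segment (controlled because $a_k \to 0$) and a tail (controlled by choosing the cut-off index large enough) upgrades $O$ to $o$. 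Applying the lemma with $(w_k, a_k) = (\lambda_k, h_k)$ gives (\ref{thm:prox1:func}); applying it with $(w_k, a_k) = (\tau_k \lambda_k,\, \|\nabla f(y_{k+1})\|^2)$ and then taking square roots gives (\ref{thm:prox1:grad}). The one not-entirely-obvious ingredient is the monotonicity of $\|\nabla f(y_{k+1})\|$ with variable step sizes, but this falls out of monotonicity of $\nabla f$ alone: setting $v_k := \nabla f(y_{k+1}) = (y_k - y_{k+1})/\lambda_k$, the monotonicity inequality $\langle v_k - v_{k-1},\, y_{k+1} - y_k\rangle \geq 0$ together with $y_{k+1} - y_k = -\lambda_k v_k$ yields $\|v_k\|^2 \leq \langle v_k, v_{k-1}\rangle$, and hence $\|v_k\| \leq \|v_{k-1}\|$ by Cauchy--Schwarz.

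Finally, for the weak convergence statement (\ref{thm:prox1:seq}) I would invoke Opial's lemma: Fej\'er monotonicity with respect to $S$ was already obtained in the first paragraph, and from (\ref{thm:prox1:func}) we have $f(y_k) \to \inf_{\cH} f$, so by weak lower semicontinuity of the convex continuous function $f$ every weak cluster point of $(y_k)$ belongs to $S$. The main obstacles I anticipate are not conceptual but bookkeeping ones, namely the Abel summation of the second paragraph and the cut-off argument of the Kronecker lemma, since the heart of the argument is a faithful transcription of the continuous analysis of Section~\ref{sec:SD-estimate} together with the discrete monotonicity facts collected above.
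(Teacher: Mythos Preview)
Your proposal is correct and follows essentially the same route as the paper. The paper likewise starts from the three-point inequality with $z=z_*\in S$ to get Fej\'er monotonicity and \ref{thm:prox1:sf}, derives the descent $h_k - h_{k+1}\ge \lambda_k\|\nabla f(y_{k+1})\|^2$, then multiplies by $\tau_k$ and combines with the first inequality to obtain \ref{thm:prox1:sg} (your Abel summation is an equivalent reformulation of their telescoping of the combined energy $\tau_k h_k + \tfrac12\|y_k-z_*\|^2$); it cites \cite[Lemma~22]{AC2} for the ``summable~$+$~monotone $\Rightarrow o(1/W_k)$'' step that you spell out as a Kronecker-type lemma; and it proves monotonicity of $\|\nabla f(y_{k+1})\|$ by the same computation you give, though it phrases it via cocoercivity rather than bare monotonicity of $\nabla f$.
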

\begin{proof}
Let $k \geq 0$ be fixed. Take $z_{*} \in S = \argmin f$. According to \eqref{im 1e} and the convexity of $f$, we deduce that
\begin{align}
	\dfrac{1}{2} \left\lVert y_{k+1} - z_{*} \right\rVert ^{2} & = \dfrac{1}{2} \left\lVert y_{k} - z_{*} \right\rVert ^{2} + \left\langle y_{k+1} - z_{*} , y_{k+1} - y_{k} \right\rangle - \dfrac{1}{2} \left\lVert y_{k+1} - y_{k} \right\rVert ^{2} \nonumber \\
	& = \dfrac{1}{2} \left\lVert y_{k} - z_{*} \right\rVert ^{2} - \lambda_{k} \left\langle y_{k+1} - z_{*} , \nabla f \left( y_{k+1} \right) \right\rangle - \dfrac{1}{2} \lambda_{k}^{2} \left\lVert \nabla f \left( y_{k+1} \right) \right\rVert ^{2} \nonumber \\
	& \leq \dfrac{1}{2} \left\lVert y_{k} - z_{*} \right\rVert ^{2} - \lambda_{k} \left( f \left( y_{k+1} \right) - \inf\nolimits_{\cH} f \right) - \dfrac{1}{2} \lambda_{k}^{2} \left\lVert \nabla f \left( y_{k+1} \right) \right\rVert ^{2} . \label{prox-al:04}
\end{align}
Statement \ref{thm:prox1:sf} follows from \cite[Lemma 5.31]{BaCo}. In addition, the limit $\lim_{k \to + \infty} \left\lVert y_{k} - z_{*} \right\rVert \in \R$ exists, which means that the first condition of the discrete Opial's lemma is fulfilled. 

On the other hand, the sequence $\left( f \left( y_{k} \right) - \inf\nolimits_{\cH} f \right) _{k \geq 0}$ is nonincreasing. Precisely,  we have for every $k \geq 0$ 
\begin{equation}\label{prox-al:05}
	\left(f \left( y_{k} \right) - \inf\nolimits_{\cH} f \right) - \left(f \left( y_{k+1} \right) - \inf\nolimits_{\cH} f \right) \geq \langle \nabla f \left( y_{k+1} \right) , y_k-y_{k+1} \rangle  = \lambda_{k} \left\lVert \nabla f \left( y_{k+1} \right) \right\rVert ^{2} \geq 0.
\end{equation}
According to \cite[Lemma 22]{AC2} we get
\begin{equation*}
	f \left( y_{k+1} \right) - \inf\nolimits_{\cH} f = o \left( \dfrac{1}{\sum_{i=0}^{k} \lambda_{i}} \right) ,
\end{equation*}
which proves \ref{thm:prox1:func}.

Let $k \geq 1$.  Multiplying both sides of \eqref{prox-al:05} by $\tau_k=\sum_{i = 0}^{k-1} \lambda_{i} > 0$, then adding the result into \eqref{prox-al:04}, we get 
\begin{align*}
	\tau_{k+1} \left( f \left( y_{k+1} \right) - \inf\nolimits_{\cH} f \right) + \dfrac{1}{2} \left\lVert y_{k+1} - z_{*} \right\rVert ^{2} 
	& \leq \tau_k \left(f \left( y_{k} \right) - \inf\nolimits_{\cH} f \right) + \dfrac{1}{2} \left\lVert y_{k} - z_{*} \right\rVert ^{2} \nonumber \\
	& \qquad - \dfrac{1}{2} \lambda_{k}^{2} \left\lVert \nabla f \left( y_{k+1} \right) \right\rVert ^{2} - \tau_k \lambda_{k} \left\lVert \nabla f \left( y_{k+1} \right) \right\rVert ^{2} .
\end{align*}
This implies 
\begin{equation*}
	\sum_{k \geq 0} \lambda_{k}^{2} \left\lVert \nabla f \left( y_{k+1} \right) \right\rVert ^{2} < + \infty
	\quad \textrm{ and } \quad
	\sum_{k \geq 1} \tau_k \lambda_{k} \left\lVert \nabla f \left( y_{k+1} \right) \right\rVert ^{2} < + \infty ,
\end{equation*}
which yields \ref{thm:prox1:sg}. From \eqref{im 1e}, we infer \ref{thm:prox1:sv}.  To deduce \ref{thm:prox1:grad}, it  suffices to show that the sequence $\left( \|\nabla f \left( y_{k} \right) \| \right) _{k \geq 0}$ is nonincreasing.
Indeed, it follows from \eqref{im prox} and the cocoercivity of $\nabla f$ that
\begin{align*}
	\dfrac{1}{2} \left\lVert \nabla f \left( y_{k+1} \right) \right\rVert ^{2}
	& = \dfrac{1}{2} \left\lVert \nabla f \left( y_{k} \right) \right\rVert ^{2} + \left\langle \nabla f \left( y_{k+1} \right) , \nabla f \left( y_{k+1} \right) - \nabla f \left( y_{k} \right) \right\rangle - \dfrac{1}{2} \left\lVert \nabla f \left( y_{k+1} \right) - \nabla f \left( y_{k} \right) \right\rVert ^{2} \nonumber \\
	& = \dfrac{1}{2} \left\lVert \nabla f \left( y_{k} \right) \right\rVert ^{2} - \dfrac{1}{\lambda_{k}} \left\langle y_{k+1} - y_{k} , \nabla f \left( y_{k+1} \right) - \nabla f \left( y_{k} \right) \right\rangle - \dfrac{1}{2} \left\lVert \nabla f \left( y_{k+1} \right) - \nabla f \left( y_{k} \right) \right\rVert ^{2} \nonumber \\
	& \leq \dfrac{1}{2} \left\lVert \nabla f \left( y_{k} \right) \right\rVert ^{2} .
\end{align*}
Taking into account also \ref{thm:prox1:sg}, we obtain \ref{thm:prox1:grad}.

\noindent Finally, according to the assumption  $\sum_{k \geq 0} \lambda_{k} =+\infty$, and  $(iv)$, we have that  $\lim_{k \to + \infty} f \left( y_{k} \right) = \inf\nolimits_{\cH} f$. Since $f$ is convex and lower semicontinuous, the second condition of Opial's lemma is also fulfilled. This gives the weak convergence of the sequence $\left( y_{k} \right) _{k \geq 0}$  to an element in $S=\argmin f$.
\qed
\end{proof}

Then,  we give a statement which can be seen as a discrete counterpart of Lemma \ref{lemma tau}. The result is more complex not only because we are in the discrete setting, but also because it allows  an explicit choice of the stepsize, as we will see later.

\begin{lemma}\label{lemma tau-k}
	Let $\left( \lambda_{k} \right) _{k \geq 0}$ be a positive sequence and $\left( \tau_{k} \right) _{k \geq 0}$ such that $\tau_0=0$ and $\tau_k =  \sum_{i = 0}^{k-1} \lambda_{i}$ for all $k \geq 1$. Suppose that there exist $C_{2}>0$ and $a, b, c \geq 0$ such that $b + c > a$ and
	\begin{equation*}
		\sum_{k \geq 0} \tau_{k}^{a} \lambda_{k}^{-b} \lambda_{k+1}^{-c} \leq C_{2} < + \infty
	\end{equation*}
	Then there exists $C_{3} > 0$ such that for every $k \geq 1$ it holds
	\begin{equation}\label{est tau-k}
		\tau_{k+1} \geq C_{3} k^{\frac{b+c+1}{b+c-a}} .
	\end{equation}
\end{lemma}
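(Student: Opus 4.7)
The plan is to discretize the argument of Lemma~\ref{lemma tau}. Set $\beta := a/(b+c+1)$ and $\gamma := (b+c)/(b+c+1)$, so that $0 \leq \beta < \gamma < 1$ by the hypothesis $b+c > a$. First I will mimic the H\"older step of the continuous proof. The algebraic identity
\begin{equation*}
\tau_k^{\beta} = \bigl(\tau_k^{a}\lambda_k^{-b}\lambda_{k+1}^{-c}\bigr)^{1/(b+c+1)} \lambda_k^{b/(b+c+1)} \lambda_{k+1}^{c/(b+c+1)}
\end{equation*}
combined with a discrete H\"older step (conjugate exponents $b+c+1$ and $(b+c+1)/(b+c)$) and a second H\"older step in the two $\lambda$-factors (conjugate exponents $(b+c)/b$ and $(b+c)/c$; the cases $b=0$ or $c=0$ reduce the inner step and are easier), followed by $\sum_{k=1}^{K}\lambda_k \leq \tau_{K+1}$ and $\sum_{k=1}^{K}\lambda_{k+1} \leq \tau_{K+2}$, will yield the key estimate
\begin{equation*}
A_K := \sum_{k=1}^{K}\tau_k^{\beta} \;\leq\; C_4\, \tau_{K+2}^{\gamma} \tag{$\ast$}
\end{equation*}
for some constant $C_4 = C_4(C_2,a,b,c) > 0$.

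Next I will turn this into a nonlinear recurrence. By construction $A_K - A_{K-1} = \tau_K^{\beta}$, while writing $(\ast)$ at index $K-2$ gives $\tau_K^{\beta} \geq (A_{K-2}/C_4)^{\beta/\gamma}$. Combining,
\begin{equation*}
A_K - A_{K-1} \;\geq\; C_4^{-\beta/\gamma}\, A_{K-2}^{\beta/\gamma} \qquad \forall K \geq 3 .
\end{equation*}

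Finally I extract a polynomial lower bound on $A_K$ via a discrete Bihari-LaSalle type argument. Set $\Phi_K := A_K^{(\gamma-\beta)/\gamma}$, whose defining exponent lies in $(0,1)$. Concavity of $x \mapsto x^{(\gamma-\beta)/\gamma}$ gives
\begin{equation*}
\Phi_K - \Phi_{K-1} \;\geq\; \tfrac{\gamma-\beta}{\gamma}\, A_K^{-\beta/\gamma}(A_K - A_{K-1}) \;\geq\; \tfrac{(\gamma-\beta) C_4^{-\beta/\gamma}}{\gamma}\, (A_{K-2}/A_K)^{\beta/\gamma}.
\end{equation*}
A dichotomy on indices then closes the argument: at indices with $A_K \leq 2A_{K-2}$ (``good'' indices), the ratio $(A_{K-2}/A_K)^{\beta/\gamma}$ is bounded below by $2^{-\beta/\gamma}$, so $\Phi_K - \Phi_{K-1}$ is bounded below by a positive constant; at indices with $A_K > 2 A_{K-2}$ (``bad'' indices), $\Phi_K > 2^{(\gamma-\beta)/\gamma}\Phi_{K-2}$ so $\Phi$ doubles along such indices. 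If bad indices form a positive-density subset of $\{3,\dots,N\}$ then $\Phi_N$ grows geometrically, whereas if they are asymptotically negligible then the good-index bound telescopes to $\Phi_N \geq c_0 N$; either way, $\Phi_N \gtrsim N$ for all $N$ large. Unwinding gives $A_K \geq c_1 K^{\gamma/(\gamma-\beta)} = c_1 K^{(b+c)/(b+c-a)}$, and feeding this back into $(\ast)$ yields $\tau_{K+2} \geq c_2 K^{1/(\gamma-\beta)} = c_2 K^{(b+c+1)/(b+c-a)}$; absorbing the index shift $K+2 \leftrightarrow K+1$ into the constant produces \eqref{est tau-k}.

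The main obstacle is this last step. In the continuous analogue, the ODE separation $A^{-a/b}dA \geq dt/C$ closes the proof in one line; in the discrete setting, the shift between $K$ and $K+2$ in $(\ast)$ obstructs a direct telescoping. A serviceable alternative to the dichotomy is an outright bootstrap: starting from the trivial bound $\tau_K \geq \tau_1$ and iteratively improving a provisional exponent $\tau_K \geq c\,k^{\delta_n}$ via the recursion $\delta_{n+1} = (\delta_n \beta + 1)/\gamma$, whose unique fixed point is $1/(\gamma-\beta) = (b+c+1)/(b+c-a)$ and to which the iteration converges because the contraction rate $\beta/\gamma = a/(b+c)$ lies in $[0,1)$.
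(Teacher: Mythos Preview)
Your main line is the paper's own argument: the H\"older step producing $(\ast)$ is exactly the paper's inequality, and your recursion $A_K - A_{K-1} \geq C_4^{-\beta/\gamma} A_{K-2}^{\beta/\gamma}$ is, after an index shift, the paper's inequality $C_{b+c}^{-a/(b+c)} \leq A_k^{-a/(b+c)}(A_{k+2}-A_{k+1})$. The paper's dichotomy is also on the ratio $A_{k+2}/A_k$ (phrased via $\psi(r)=r^{-a/(b+c)}$), so up to this point the two proofs coincide.

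The one substantive difference is how the dichotomy is closed, and here the paper is both simpler and tighter than your density argument. The paper observes that \emph{both} branches give a uniform additive increment of $\Phi$ over two steps: in your notation, at ``good'' indices $\Phi_K - \Phi_{K-2} \geq \Phi_K - \Phi_{K-1} \geq c$ (using that $\Phi$ is nondecreasing), and at ``bad'' indices $\Phi_K - \Phi_{K-2} \geq (2^{(\gamma-\beta)/\gamma}-1)\Phi_{K-2} \geq (2^{(\gamma-\beta)/\gamma}-1)\Phi_1 > 0$. Hence $\Phi_K - \Phi_{K-2} \geq C_4'>0$ for all $K$, and telescoping along even and odd indices gives $\Phi_N \geq c_0 N$ directly. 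Your density formulation (``positive density of bad indices forces geometric growth, otherwise the good increments telescope'') is not wrong, but it is an unnecessarily roundabout packaging of this same inequality and, as written, leaves a gap: ``positive density'' versus ``asymptotically negligible'' is not an exhaustive dichotomy valid for every $N$, so the phrase ``either way, $\Phi_N \gtrsim N$ for all $N$ large'' needs the cleaner argument above to be justified.

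Your bootstrap alternative is not a complete substitute. Each pass improves the exponent via $\delta_{n+1}=(\delta_n\beta+1)/\gamma$, which indeed converges to $1/(\gamma-\beta)$, but the multiplicative constant in $\tau_K \geq c_n K^{\delta_n}$ changes at every step (through the factor $1/(\delta_n\beta+1)$ in $\sum_{k\leq K}k^{\delta_n\beta}$ and the index shift $K+2\to K$). Without a separate argument that the constants stabilise (and that the validity threshold in $K$ does not drift to infinity), the bootstrap only yields $\tau_K \geq c_\varepsilon K^{(b+c+1)/(b+c-a)-\varepsilon}$ for each $\varepsilon>0$, which is strictly weaker than \eqref{est tau-k}.
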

\begin{proof}
By applying the H\"{o}lder inequality twice we get for all $k \geq 0$
\begin{align}
	\sum_{i = 0}^{k} \tau_{i}^{\frac{a}{b+c+1}} & \leq \left( \sum_{i = 0}^{k} \tau_{i}^{a} \lambda_{i}^{-b} \lambda_{i+1}^{-c} \right) ^{\frac{1}{b+c+1}} \left( \sum_{i = 0}^{k} \lambda_{i} \right) ^{\frac{b}{b+c+1}} \left( \sum_{i = 0}^{k} \lambda_{i+1} \right) ^{\frac{c}{b+c+1}} \nonumber \\
	& \leq C_{2}^{\frac{1}{b+c+1}} \left( \sum_{i = 0}^{k+1} \lambda_{i} \right) ^{\frac{b+c}{b+c+1}} = C_{2}^{\frac{1}{b+c+1}} \tau_{k+2} ^{\frac{b+c}{b+c+1}}. \label{est Holder-3} 
\end{align}

\noindent If $a=0$ then \eqref{est tau-k} follows immediately.
From now on we  suppose that $a > 0$. Inequality \eqref{est Holder-3} becomes
\begin{equation}
	\label{est Holder-seq}
	\sum_{i = 0}^{k} \tau_{i}^{\frac{a}{b+c+1}} \leq C_{2}^{\frac{1}{b+c+1}} \left( \tau_{k+2} ^{\frac{a}{b+c+1}} \right) ^{\frac{b+c}{a}} \quad \forall k \geq 0.
\end{equation}
Following the continuous counterpart, let us define
\begin{equation*}
	C_{b+c} := C_{2}^{\frac{1}{b+c+1}} > 0 
	\qquad \textrm{ and } \qquad
	A_{k} := \sum_{i = 0}^{k} \tau_{i}^{\frac{a}{b+c+1}} \qquad \forall k \geq 0
\end{equation*}
so that \eqref{est Holder-seq} becomes
\begin{equation*}
	A_{k} \leq C_{b+c} \left( A_{k+2} - A_{k+1} \right) ^{\frac{b+c}{a}} \quad \forall k \geq 0.
\end{equation*}
From here,

\begin{equation}
	\label{est rec}
	C_{b+c}^{-\frac{a}{b+c}} \leq A_{k}^{- \frac{a}{b+c}} \left( A_{k+2} - A_{k+1} \right) \quad \forall k \geq 1.
\end{equation}

\noindent For convenience, we define the following function $\psi \colon \R_{++} \to \R_{++}$ as $\psi \left( r \right) := r^{-\frac{a}{b+c}}$. It is clear that

\begin{equation*}
	\dfrac{d}{dr} \left( \dfrac{b+c}{b+c-a} r^{1-\frac{a}{b+c}} \right) = \psi \left( r \right) \qquad \textrm{ and } \qquad \dot{\psi} \left( r \right) = - \dfrac{a}{b+c} r^{-\frac{a}{b+c}-1} < 0 .
\end{equation*}

\smallskip

\noindent Since $\left( A_{k} \right) _{k \geq 0}$ is increasing, this means $\psi \left( A_{k+2} \right) \leq \psi \left( r \right) \leq \psi(A_k) $ for every $A_{k} \leq r \leq A_{k+2}$.

\smallskip

\noindent Let $k \geq 1$ fixed. We consider two separate cases.

\item[Case 1: $\psi \left( A_{k} \right) \leq 2 \psi \left( A_{k+2} \right)$.] Then \eqref{est rec} leads to
\begin{align*}
	C_{b+c}^{-\frac{a}{b+c}} & \leq A_{k}^{- \frac{a}{b+c}} \left( A_{k+2} - A_{k} \right) = \psi \left( A_{k} \right) \left( A_{k+2} - A_{k} \right) \nonumber \\
	& \leq 2 \psi \left( A_{k+2} \right) \left( A_{k+2} - A_{k} \right) = 2 \psi \left( A_{k+2} \right) \int_{A_{k}}^{A_{k+2}} 1dr \nonumber \\
	& \leq 2 \int_{A_{k}}^{A_{k+2}} \psi \left( r \right) dr = 2\dfrac{b+c}{b+c-a} \left( A_{k+2}^{1-\frac{a}{b+c}} - A_{k}^{1-\frac{a}{b+c}} \right) .
\end{align*}

\smallskip

\item[Case 2: $\psi \left( A_{k} \right) > 2 \psi \left( A_{k+2} \right)$.]
This is equivalent to $A_{k+2} > 2^{\frac{b+c}{a}} A_{k}$.
Since $b+c>a$, we can deduce further

\begin{equation*}
	A_{k+2}^{1-\frac{a}{b+c}} > 2^{\frac{b+c}{a}-1} A_{k}^{1-\frac{a}{b+c}} .
\end{equation*}
Consequently,
\begin{equation*}
	A_{k+2}^{1-\frac{a}{b+c}} - A_{k}^{1-\frac{a}{b+c}} > \left( 2^{\frac{b+c}{a}-1} - 1 \right) A_{k}^{1-\frac{a}{b+c}} \geq \left( 2^{\frac{b+c}{a}-1} - 1 \right) A_{1}^{1-\frac{a}{b+c}} ,
\end{equation*}
recall that the last inequality follows from the increasing property of $\left( A_{k} \right) _{k \geq 1}$.

\smallskip

\noindent In conclusion, for every $k \geq 0$ we have
\begin{equation*}
	A_{k+2}^{1-\frac{a}{b+c}} - A_{k}^{1-\frac{a}{b+c}} \geq C_{4} := \min \left\lbrace \frac{1}{2}\left( 1 - \dfrac{a}{b+c} \right) C_{b+c}^{-\frac{a}{b+c}} ,  \left( 2^{\frac{b+c}{a}-1} - 1 \right) A_{1}^{1-\frac{a}{b+c}}, A_{2}^{1-\frac{a}{b+c}} \right\rbrace > 0 .
\end{equation*}
Telescoping sum arguments combined with \eqref{est Holder-seq} imply for every $k \geq 1$
\begin{equation*}
	C_{4}k \leq A_{2k}^{1-\frac{a}{b+c}} - A_{0}^{1-\frac{a}{b+c}} \leq A_{2k}^{1-\frac{a}{b+c}} \leq C_{2}^{\frac{b+c-a}{(b+c)(b+c+1)}} \tau_{2k+2} ^{\frac{b+c-a}{b+c+1}} .
\end{equation*}
This  gives for every $k \geq 1$
$$
\tau_{2k+3} \geq \tau_{2k+2} \geq \widetilde C_{3} k^{\frac{b+c+1}{b+c-a}},
$$
where $\widetilde C_3 >0$.
We therefore deduce that there exists $C_3 >0$ such that
$$
\tau_{k+1} \geq C_3 k^{\frac{b+c+1}{b+c-a}} \quad \forall k \geq 1,
$$
which gives \eqref{est tau-k}.
\qed
\end{proof}

Following a plan identical to the continuous case, we successively consider the case where the control by feedback is formulated in terms of speed, then of gradient.

\subsection{Adaptive stepsize rules resulting from the discretization of the velocity based system}

In this subsection we specialize the algorithm (PEAS) to the case where 
$G(y_{k} , y_{k-1} )= \left\lVert y_{k} - y_{k-1} \right\rVert$.
\begin{algorithm}[h!]	
	\caption{Proximal algorithm with adaptive step size defined via velocity}
	\label{algo:prox-vbased}
	\vspace{2mm}
	\DontPrintSemicolon 
	
	\KwIn{$y_{0} \neq y_{-1} \in \cH$}
	\For{$k = 0, 1, \cdots$}
	{
		\uIf{$\nabla f \left( y_{k} \right) = 0$}{
			$\textbf{stop}$\;
		}
		\Else{
			$\lambda_{k} := \left\lVert y_{k} - y_{k-1} \right\rVert ^{-\frac{p-1}{p}}$\\
			$y_{k+1}	 := \prox_{\lambda_{k} f} \left( y_{k} \right)$\\
		}
	}
\end{algorithm}


\begin{theorem}\label{thm:prox-adapt-vel}
	Let $\left( y_{k} \right) _{k \geq 0}$ be the sequence generated by Algorithm {\rm \ref{algo:prox-vbased}}.
	Then it holds
	\begin{equation*}
		f \left( y_{k} \right) - \inf\nolimits_{\cH} f = o \left( \dfrac{1}{k^{2-\frac{1}{p}}} \right) \textrm{ as } k \to + \infty,
	\end{equation*}
	and the sequence of iterates $\left( y_{k} \right) _{k \geq 0}$ converges weakly as $k \to + \infty$, and its limit belongs to $S = \argmin_{\cH} f$.
\end{theorem}
\begin{proof}
	By the choice of the step size, we have from Theorem \ref{thm:prox1} \ref{thm:prox1:sv} that
	\begin{equation*}
		\sum_{k \geq 0} \dfrac{\tau_k}{\lambda_{k}} \left\lVert y_{k+1} - y_{k} \right\rVert ^{2} = \sum_{k \geq 0} \tau_k \lambda_{k}^{-1} \lambda_{k+1}^{-\frac{2p}{p-1}} < + \infty,
	\end{equation*}
where $\tau_0=0$ and $\tau_{k} := \sum_{i=0}^{k-1} \lambda_{i}$ for every $k \geq 1$. We are in position to apply Lemma \ref{lemma tau-k} with $\left( a , b , c \right) := \left( 1 , 1 , \frac{2p}{p-1} \right)$. We get 
\begin{equation}\label{eq:30-12-22}
\tau_{k+1} \geq C_{3} k^{2- \frac{1}{p}} \quad \forall k \geq 1. 
\end{equation}	
Therefore 	$\sum_{k \geq 0} \lambda_{k}= \lim_{k \rightarrow +\infty} \tau_{k} =+\infty$, and we can apply Theorem \ref{thm:prox1} to obtain
the conclusion.
	\qed
\end{proof}

\subsection{Adaptive stepsize resulting from the discretization of the gradient based system}

Now let us specialize the algorithm (PEAS) to the case where 
$G(y_{k} , y_{k-1} )= \left\lVert \nabla f (y_{k}) \right\rVert$.

\begin{algorithm}[h!]	
	\caption{Proximal algorithm with adaptive step size defined via gradient}
	\label{algo:prox-gbased}
	\DontPrintSemicolon 
	\KwIn{$y_{0} \in \cH$}
	\For{$k = 0, 1, \cdots$}
	{
		\uIf{$\nabla f \left( y_{k} \right) = 0$}{
			$\textbf{stop}$\;
		}
		\Else{
			$\lambda_{k} := \left\lVert \nabla f \left( y_{k} \right) \right\rVert ^{-\frac{p-1}{p}}$\\
			$y_{k+1}	 := \prox_{\lambda_{k} f} \left( y_{k} \right)$\\
		}
	}
\end{algorithm}

\begin{theorem}
	Let $\left( y_{k} \right) _{k \geq 0}$ be the sequence generated by Algorithm {\rm \ref{algo:prox-gbased}}.
	Then it holds
	\begin{equation*}
		f \left( y_{k+1} \right) - \inf\nolimits_{\cH} f = o \left( \frac{1}{k^{ 2- \frac{1}{p}}} \right) \textrm{ as } k \to + \infty
	\end{equation*}
	and the sequence of iterates $\left( y_{k} \right) _{k \geq 0}$ converges weakly as $k \to + \infty$, and its limit belongs to $S = \argmin_{\cH} f$.
\end{theorem}
\begin{proof}
In this case  we have from Theorem \ref{thm:prox1} \ref{thm:prox1:sg}

\begin{equation}\label{ineq_k_k+1}
	\sum_{k \geq 0} \tau_{k} \lambda_{k} \left\lVert \nabla f \left( y_{k+1} \right) \right\rVert ^{2} = \sum_{k \geq 0} \tau_{k} \lambda_{k} \lambda_{k+1}^{-\frac{2p}{p-1}} < + \infty,
\end{equation}
where $\tau_0=0$ and $\tau_{k} := \sum_{i=0}^{k-1} \lambda_{i}$ for every $k \geq 1$.

\noindent  Let us establish an inequality of the type

\begin{equation*}
	\left\lVert \nabla f \left( y_{k} \right) \right\rVert \leq C_k \left\lVert \nabla f \left( y_{k+1} \right) \right\rVert ,
\end{equation*}
for some sequence $C_k >0$ which is to be precised. We have for all $k \geq 0$
\begin{align}
	\left\lVert \nabla f \left( y_{k} \right) \right\rVert ^{2} & = \left\lVert \nabla f \left( y_{k+1} \right) \right\rVert ^{2} - 2 \left\langle \nabla f \left( y_{k+1} \right) , \nabla f \left( y_{k+1} \right) - \nabla f \left( y_{k} \right) \right\rangle + \left\lVert \nabla f \left( y_{k+1} \right) - \nabla f \left( y_{k} \right) \right\rVert ^{2} \nonumber \\
	& = \left\lVert \nabla f \left( y_{k+1} \right) \right\rVert ^{2} + \dfrac{2}{\lambda_{k}} \left\langle y_{k+1} - y_{k} , \nabla f \left( y_{k+1} \right) - \nabla f \left( y_{k} \right) \right\rangle + \left\lVert \nabla f \left( y_{k+1} \right) - \nabla f \left( y_{k} \right) \right\rVert ^{2} \nonumber \\
	& \leq \left\lVert \nabla f \left( y_{k+1} \right) \right\rVert ^{2} + \left( \dfrac{2L}{\lambda_{k}} + L^{2} \right) \left\lVert y_{k+1} - y_{k} \right\rVert ^{2} = \left( 1 + L \lambda_{k} \right) ^{2} \left\lVert \nabla f \left( y_{k+1} \right) \right\rVert ^{2} \nonumber \\
	& \leq  \left( 1 + L \lambda_{k+1} \right) ^{2} \left\lVert \nabla f \left( y_{k+1} \right) \right\rVert ^{2} \label{ineq_k_k+1-2}
\end{align}
where $L >0$ denotes the Lipschitz constant of $\nabla f$ on a bounded set containing the sequence $\left( y_{k} \right) _{k \geq 0}$. 

\noindent Combining \eqref{ineq_k_k+1} and \eqref{ineq_k_k+1-2}, we get
\begin{equation}\label{ineq_k_k+1-4}
	\sum_{k \geq 0} \tau_{k} \lambda_{k} \frac{1}{\left( 1 + L \lambda_{k+1} \right) ^{2}}
	\left\lVert \nabla f \left( y_{k} \right) \right\rVert ^{2} = \sum_{k \geq 0} \tau_{k} \lambda_{k} \frac{1}{\left( 1 + L \lambda_{k+1} \right) ^{2}} \lambda_{k}^{-\frac{2p}{p-1}} < + \infty.
\end{equation}

Let us now show that $\lim_{k \rightarrow +\infty} \lambda_{k}=+\infty$. 
According to the decreasing property of the  sequence $\left( f \left( y_{k} \right) - \inf\nolimits_{\cH} f \right) _{k \geq 0}$, by summing inequalities \eqref{prox-al:05}  we get
\begin{equation}\label{prox-al:055}
	\sum_{k \geq 0}  \lambda_{k} \left\lVert \nabla f \left( y_{k+1} \right) \right\rVert ^{2} < +\infty.
\end{equation}
From the closed-loop rule we deduce that 
\begin{equation}\label{prox-al:056}
	\sum_{k \geq 0}  \lambda_{k} \lambda_{k+1}^{-\frac{2p}{p-1}} < +\infty.
\end{equation}
Therefore
$$
\lim_{k \rightarrow +\infty} \lambda_{k} \lambda_{k+1}^{-\frac{2p}{p-1}} =0.
$$
Since $(\lambda_{k})_{k \geq 0}$ is increasing, let us denote by $l >0$ its limit.
If $l$ is finite then, by passing to the limit on the above inequality we get $l^{1-\frac{2p}{p-1}}=0$,  a clear contradiction with $l >0 $. Therefore
$$
\lim_{k \rightarrow +\infty} \lambda_{k}  = +\infty.
$$

\noindent In this case $\frac{1}{\left( 1 + L \lambda_{k+1} \right)^{2}} \sim 
\left(  L \lambda_{k+1} \right)^{-2}$, which gives
\begin{equation}\label{ineq_k_k+1-44}
	\sum_{k \geq 0} \tau_{k} \lambda_{k}^{1-\frac{2p}{p-1}} 
	 {\lambda_{k+1}}^{-2} < + \infty.
\end{equation}

\noindent We are in position to apply Lemma \ref{lemma tau-k} with $\left( a , b , c \right) := \left( 1 , \frac{2p}{p-1} -1 , 2 \right)$.
We get 
$$
\tau_{k+1} \geq C_{3} k^{ 2- \frac{1}{p}} \quad \forall k \geq 1. 
$$	
We have $\sum_{k \geq 0} \lambda_{k}= \lim_{k \rightarrow +\infty} \tau_{k} =+\infty$, and we can apply Theorem \ref{thm:prox1} to obtain,
as $k \to + \infty $
$$f \left( y_{k+1} \right) - \inf\nolimits_{\cH} f = o \left( \frac{1}{k^{ 2- \frac{1}{p}}} \right).
$$ 
This completes the proof. \qed
\end{proof}

\begin{remark}{\rm
Note that the closed-loop control of the velocity and the closed-loop control of the gradient give the same convergence rate of the values.
Clearly, we have obtained a faster convergence result compared to the classical Proximal Point Algorithm (PPA), which we also cover since it coincides with $p=1$ in both cases.}
\end{remark}

\section{Inertial proximal algorithms obtained by closed-loop damping}

Let us now consider the convergence properties of  the sequences $(x_k)_{k \geq 0}$ which are obtained by applying  the averaging process to the sequences generated by Algorithm \ref{algo:prox-vbased}.  Indeed, we limit our investigation to the closed loop control of the velocity, the case of the closed loop control of the gradient is very similar.
Let us discretize the continuous averaging relation
$$\dot{x}(t) + \dfrac{\dot{\tau}(t)}{\tau(t)} \left( x(t) - y(t) \right)= 0
$$

\noindent as follows (recall that, because of the choice $q=1$, we have 
$\dot{\tau}(t)= \lambda (t)$)
$$
x_{k+1} - x_{k} + \frac{\lambda_k}{\tau_{k+1}} \left( x_k -  y_{k+1}   \right)=0.
$$

\noindent Equivalently 

$$
x_{k+1} = \left( 1- \frac{\lambda_k}{\tau_{k+1}}\right)	  x_k  + \frac{\lambda_k}{\tau_{k+1}}  y_{k+1}. 
$$

\noindent This gives the following proximal inertial algorithm:

\begin{algorithm}[h!]	
	\caption{Proximal inertial algorithm with adaptive step size defined via velocity}
	\label{algo:prox-inertial}
	\DontPrintSemicolon 
	\KwIn{$\tau_{0} := 0$ and $x_0, y_{0} \neq y_{-1} \in \cH$}
	\For{$k = 0, 1, \cdots$}
	{
		\uIf{$\nabla f \left( y_{k} \right) = 0$}{
			$\textbf{stop}$\;
		}
		\Else{
			$\displaystyle{\lambda_{k} := \left\lVert y_{k} - y_{k-1} \right\rVert ^{-\frac{p-1}{p}}}$ \vspace{1mm}\\
			$\displaystyle{ y_{k+1}	 := \prox_{\lambda_{k} f} \left( y_{k} \right)}$ \vspace{1mm}
			\\
$\displaystyle{\tau_{k+1} := \tau_k + \lambda_k}$ \vspace{1mm}\\
			$\displaystyle{x_{k+1} : = \left( 1- \frac{\lambda_k}{\tau_{k+1}}\right)	  x_k  + \frac{\lambda_k}{\tau_{k+1}}  y_{k+1}.}$}
	}		
\end{algorithm}

\begin{theorem}\label{thm:prox-conv-31}
Let $\left( x_{k} \right) _{k \geq 0}$ be the sequence generated by Algorithm {\rm \ref{algo:prox-inertial}}.
Then it holds
$$f \left( x_{k} \right) - \inf\nolimits_{\cH} f = \mathcal O \left( \frac{1}{k^{2-\frac{1}{p}}} \right) \textrm{ as } k \to + \infty$$
and the sequence of iterates $\left(x_{k} \right) _{k \geq 0}$ converges weakly as $k \to + \infty$, and its limit belongs to $S = \argmin_{\cH} f$.
\end{theorem}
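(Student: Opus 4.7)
The plan is to lift the analysis of Theorem \ref{thm:prox-adapt-vel} (which handles the $y_k$-sequence) to the $x_k$-sequence by interpreting $x_k$ as a weighted average of the $y_k$'s, in direct analogy with the continuous-time averaging argument of Theorem \ref{thm:basic_inertial_general}. Concretely, I will first rewrite the update
$$x_{k+1} = \left(1 - \frac{\lambda_k}{\tau_{k+1}}\right) x_k + \frac{\lambda_k}{\tau_{k+1}} y_{k+1}$$
in telescoping form $\tau_{k+1} x_{k+1} = \tau_k x_k + \lambda_k y_{k+1}$, and unroll it using the initialization $\tau_0 = 0$ to get the explicit representation
$$x_{k+1} = \frac{1}{\tau_{k+1}} \sum_{i=0}^{k} \lambda_i y_{i+1}, \qquad \sum_{i=0}^{k} \frac{\lambda_i}{\tau_{k+1}} = 1.$$
This is the discrete counterpart of the probability-measure formulation \eqref{proba-formulation}--\eqref{proba-formulation2}: $x_{k+1}$ is a convex combination of $y_1,\dots,y_{k+1}$ with weights $\lambda_i/\tau_{k+1}$.

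Once this averaging identity is in hand, Jensen's inequality (using convexity of $f$) gives
$$f(x_{k+1}) - \inf_{\cH} f \leq \frac{1}{\tau_{k+1}} \sum_{i=0}^{k} \lambda_i \left( f(y_{i+1}) - \inf_{\cH} f \right).$$
The numerator is bounded uniformly in $k$ by the summability statement \ref{thm:prox1:sf} of Theorem \ref{thm:prox1}, namely $M := \sum_{i \geq 0} \lambda_i (f(y_{i+1}) - \inf_{\cH} f) < +\infty$. Hence $f(x_{k+1}) - \inf_{\cH} f \leq M/\tau_{k+1}$, and invoking the lower bound $\tau_{k+1} \geq C_3\, k^{2 - 1/p}$ from the proof of Theorem \ref{thm:prox-adapt-vel} (obtained via Lemma \ref{lemma tau-k}) yields the announced rate $f(x_k) - \inf_{\cH} f = \cO(1/k^{2-1/p})$.

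For the weak convergence of $(x_k)_{k\geq 0}$, I will combine the weak convergence of $(y_k)_{k \geq 0}$ to some $y_* \in S$, already granted by Theorem \ref{thm:prox-adapt-vel}, with a discrete Toeplitz/ergodic averaging argument: for every $v \in \cH$,
$$\dotp{x_{k+1}}{v} = \sum_{i=0}^{k} \frac{\lambda_i}{\tau_{k+1}} \dotp{y_{i+1}}{v},$$
where the weights $\lambda_i/\tau_{k+1}$ are nonnegative, sum to $1$, and satisfy $\lambda_i/\tau_{k+1} \to 0$ as $k \to \infty$ for each fixed $i$ (because $\tau_k \to +\infty$). Since $\dotp{y_i}{v} \to \dotp{y_*}{v}$, the Toeplitz lemma delivers $\dotp{x_{k+1}}{v} \to \dotp{y_*}{v}$, i.e.\ $x_k \rightharpoonup y_*$ weakly. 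The main obstacle is not really technical but conceptual: one must notice that the seemingly ad-hoc averaging coefficient $\lambda_k/\tau_{k+1}$ in Algorithm \ref{algo:prox-inertial} is precisely what makes $x_k$ a probability average of the $y_i$'s with respect to the discrete measure dual to $\mu_t$ in \eqref{proba-formulation2}; once this structural observation is made, the rest of the argument is a routine Jensen + Toeplitz combination that requires no new Lyapunov analysis.
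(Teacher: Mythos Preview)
Your proposal is correct and follows essentially the same route as the paper: derive the averaging identity $x_{k+1}=\tau_{k+1}^{-1}\sum_{i=0}^{k}\lambda_i y_{i+1}$ from the telescoping relation $\tau_{k+1}x_{k+1}-\tau_k x_k=\lambda_k y_{k+1}$, apply Jensen's inequality together with the summability \ref{thm:prox1:sf} of Theorem \ref{thm:prox1} and the lower bound \eqref{eq:30-12-22} on $\tau_{k+1}$, and deduce weak convergence of $(x_k)$ from that of $(y_k)$ via a weighted-average argument. The only cosmetic difference is that the paper names the last step ``Stolz--Ces\`aro'' while you invoke the Toeplitz lemma; both amount to the same computation here.
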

		
\begin{proof}
Let $k \geq 0$. By definition of $x_{k+1}$ we have
$$
\tau_{k+1} x_{k+1} = (\tau_{k+1}-\lambda_k )x_k + \lambda_{k} y_{k+1} 
$$
which gives
(recall that $\tau_{k+1} = \sum_{i=0}^k   \lambda_i$)
$$
\tau_{k+1} x_{k+1}   - \tau_{k} x_{k}= (\tau_{k+1}-\lambda_k )x_k + \lambda_{k} y_{k+1} - \tau_{k} x_{k}= ( \tau_{k+1} - \tau_{k} -\lambda_k )x_k +\lambda_{k} y_{k+1}  = \lambda_{k} y_{k+1}.
$$

\noindent Therefore, by telescoping arguments we obtain
\begin{equation}\label{eq:average}
 x_{k+1} = \frac{\sum_{i=0}^{k} \lambda_i y_{i+1}} {\tau_{k+1}} \quad \forall k \geq 0.
\end{equation}
By convexity of $f$ we infer
\begin{align*}
	f \left( x_{k+1} \right) - \inf\nolimits_{\cH} f
	& = \left( f - \inf\nolimits_{\cH} f \right) \left( x_{k+1} \right) = \left( f - \inf\nolimits_{\cH} f \right) \left( \frac{\sum_{i=0}^{k} \lambda_i y_{i+1}} {\tau_{k+1}} \right) \nonumber \\
	& \leq \dfrac{1}{\tau_{k+1}} \sum_{i=0}^{k} \lambda_i \left( f - \inf\nolimits_{\cH} f \right) \left( y_{i+1} \right) = \dfrac{1}{\tau_{k+1}} \sum_{i=0}^{k} \lambda_i \left( f \left( y_{i+1} \right) - \inf\nolimits_{\cH} f \right) ,
\end{align*}
By Theorem \ref{thm:prox1} \ref{thm:prox1:sf}, we have $\sum_{k \geq 0} \lambda_{k} \left( f \left( y_{k+1} \right) - \inf\nolimits_{\cH} f \right) < + \infty ,$ and by \eqref{eq:30-12-22} we have $\tau_{k+1} \geq k^{2-\frac{1}{p}}$, which gives the claim.
The weak convergence of $\left(x_{k} \right) _{k \geq 0}$ to an element in $S = \argmin_{\cH} f$ follows from the weak convergence of $\left(y_{k} \right) _{k \geq 0}$ and the Stolz-Ces\'aro Theorem.
\qed 
\end{proof}

\subsection{Geometric interpretation of Algorithm \ref{algo:prox-inertial}}

First note that Algorithm \ref{algo:prox-inertial} can be equivalently written as follows
\begin{equation}
 x_{k+1} = \left( 1- \frac{\lambda_k}{\tau_{k+1} }\right) x_k +  \frac{\lambda_k}{\tau_{k+1}} \prox_{\lambda_{k} f} \left( x_{k-1} + \frac{\tau_{k} }{\lambda_{k-1}} (x_k -  x_{k-1} )\right). 
\end{equation}
Since  $\frac{\tau_{k} }{\lambda_{k-1}} >1$, the algorithm first involves an extrapolation step (this is the inertial aspect), then a proximal step, and finally a relaxation step which balances the inertia effect and dampens the oscillations. This is shown in the figure below.
We set $\theta_k = \frac{\lambda_k}{\tau_{k+1}} \in ]0, 1[$.

\begin{figure}[!h] 
\setlength{\unitlength}{8cm}
\begin{picture}(0.5,0.7)(-0.65,-0.05)
\tcb{
\put(0.357,0.51){$y_k =  x_{k-1} +  \frac{1}{\theta_{k-1}} ( x_{k}  - x_{k-1})$}
\put(0.328,0.493){{\tiny $\bullet$}}
\put(0.369,0.582){$x_k$}
\put(0.348,0.565){{\tiny $\bullet$}}
\put(0.37,0.657){$x_{k-1}$}
\put(0.364,0.635){{\tiny $\bullet$}}
\put(0.273,0.485){{\tiny $\bullet$}}
\put(-0.4,0.485){$ x_{k+1} = \left( 1- \theta_k \right) x_k +  \theta_k \prox_{\lambda_{k} f} \left( y_k \right)$}
\put(0.255,0.388){$\prox_{\lambda_{k} f}(y_k) $}
\put(-0.08,0.29){$S$}
}
\qbezier(-0.1,0.2)(0.4,0.37)(-0.1,0.42)
\qbezier(-0.1,0.15)(0.57,0.4)(-0.1,0.48)
\qbezier(-0.1,0.02)(1.05,0.45)(-0.1,0.6)
\qbezier(0.,-0.01)(1.2,0.53)(-0.1,0.65)
\qbezier(0.16,-0.01)(1.42,0.59)(-0.1,0.709)
\put(-0.1,0.23){\line(1,1){0.16}}
\put(-0.02,0.23){\line(1,1){0.136}}
\put(-0.1,0.32){\line(1,1){0.085}}

\put(0.019,0.633){\line(5,-2){0.6}}
\put(0.34,0.504){\vector(-4,-3){0.13}}

\put(0.325,0.505){\line(1,4){0.035}}

\put(0.174,0.4){\line(5,6){0.145}}

\put(-0.2,-0.1){Fig.1 \, A geometrical illustration of Algorithm \ref{algo:prox-inertial}}
\end{picture}
\label{fig1NAG}
\end{figure}

\vspace{3mm}

Despite some analogies,  Algorithm \ref{algo:prox-inertial} is different from the relaxed inertial proximal algorithm (RIPA)  considered by Attouch and Cabot in \cite{AC-RIPA}, and which writes

\begin{equation}
	\label{def:RIPA}
	\begin{cases}
		y_k &=  x_{k} +  \alpha_{k} ( x_{k}  - x_{k-1}) \vspace{1mm}\\		
		x_{k+1} &= (1-\rho_k)y_k + \rho_k \prox_{\lambda_{k} f}(y_k)   
	\end{cases}
\end{equation}

As main difference,  in Algorithm \ref{algo:prox-inertial} the relaxation is taken between $x_k$ and $\prox_{\lambda_{k} f}(y_k)$, while in (RIPA) it is taken between $y_k$ and $\prox_ { \lambda_{k} f}(y_k)$. Consequently,  Algorithm \ref{algo:prox-inertial} involves a Hessian damping effect which is not present in (RIPA).  Note in Algorithm \ref{algo:prox-inertial} the balance between the extrapolation (inertial, acceleration) effect and the relaxation effect. Moreover, our construction provides coefficients which are generated automatically in closed loop way, whereas in (RIPA) they require subtle adjustment.
The importance of the relaxation technique when combined with inertia has been put to the fore in \cite{IH}.
According to \eqref{eq:average}, $x_{k+1}$ can be interpreted as an average of the $\{y_n:  0 \leq n \leq k+1 \}$, which makes our approach somewhat analogous to the nonlinear averaging technique developed in \cite{SAB}, where it is assumed that there is a unique minimizer.  Averaging techniques have also been used in \cite{PL} in the context of hybrid systems. Indeed, adjusting  the damping in a closed-loop  ad hoc manner bears  some analogy to restarting methods.

\subsection{Extension to the nonsmooth setting}

Since our proposed numerical algorithms are proximal methods,  they can be used also to minimize nonsmooth and convex functions. Given an optimization problem
\begin{equation}\label{pb:nonsmooth}
	\min \left\lbrace  f(x) : \, x\in\cH  \right\rbrace,
\end{equation}
where $f: \cH \rightarrow \R \cup \left\lbrace + \infty \right\rbrace$ is a proper, lower semicontinuous, and convex function with $S = \argmin_{\cH} f \neq \emptyset$,  one can equivalently consider the problem
\begin{equation}\label{pb:Moreau}
	\min \left\lbrace  f_{\gamma}(x) : \, x\in\cH  \right\rbrace,
\end{equation}
where $f_{\gamma} : \cH \rightarrow \R$ stands for the Moreau envelope of $f$ with parameter $\gamma > 0$,  defined as 
\begin{equation*}
	f_{\gamma} \left( x \right) = \inf_{y \in \cH} \left\lbrace  f \left( x \right) + \dfrac{1}{2 \gamma} \left\lVert x - y \right\rVert ^{2} \right\rbrace .
\end{equation*}
The two problems share the same optimal value and solution set $\argmin_{\cH} f = \argmin_{\cH} f_{\gamma}$,  while the Moreau envelope is convex and differentiable and it has a $\gamma^{-1}$-Lipschitz continuous gradient (\cite{BaCo,Beck}). The formulas for the proximal operators of $f$ and $f_{\gamma}$ are closely related by a simple convex combination, which makes no difference when solving \eqref{pb:nonsmooth} and \eqref{pb:Moreau} conceptually.  We can therefore exploit these premises to apply our methods to a broader class of functions that are only assumed to be proper, lower semicontinuous, and convex.

\section{Numerical experiments}

In this section, we carry out numerical experiments in order to demonstrate the effectiveness of the methods we have proposed.

Let $q \geq 1$ and $X \in \R^{m \times n}$.
We denote by $\norm{\cdot}_{\mathcal{S}_{q}}$ the Schatten $q$-norm of $X$,  which is defined as
\begin{equation*}
\norm{X}_{\mathcal{S}_{q}}^{q} = \sum_{i} \sigma_{i}^{q} \left( X \right) ,
\end{equation*}
where $\sigma_{i} \left( X \right)$ denote the $i^{th}$-singular value of $X$.  In case $q := 1$ it gives the nuclear norm,  which we denote by $\norm{\cdot}_{\ast}$,  and in case $q := 2$ it gives the Frobenius norm, which we denote by $\norm{\cdot}_{\mathsf{F}}$.  Further,  we denote by $E_{\lambda_{-},\lambda_{+}} \left( \R^{n \times n} \right)$  the set of positive semidefinite $n \times n$ matrices with eigenvalues belonging to $\left[ \lambda_{-} , \lambda_{+} \right]$. The 
 logarithm of a matrix $X \in \R^{n \times n}$ is the matrix $Y := \log \left( X \right) \in  \R^{n \times n}$ such that
\begin{equation*}
X = e^{Y} = \sum_{i \geq 0} \dfrac{1}{i!} Y^{i} .
\end{equation*}

The functions used in the numerical experiments were:
\begin{enumerate}[label=$(\alph*)$]
\item 
``\emph{$\log\det$ $+$ Nuclear norm}''
 \begin{equation*}
f: \R^{n \times n} \rightarrow \R \cup \left\lbrace + \infty \right\rbrace, \quad f \left( X \right) := \begin{cases}
- \log \left( \det \left( X \right) \right) + \mu \norm{X}_{\ast} & \textrm{ if } X \succ 0 , \nonumber \\
+ \infty & \textrm{ otherwise},
\end{cases}
\end{equation*}

\item 
``\emph{$\log\det$ $+$ Squared Frobenius norm}''
\begin{equation*}
f: \R^{n \times n} \rightarrow \R \cup \left\lbrace + \infty \right\rbrace, \quad f \left( X \right) := \begin{cases}
- \log \left( \det \left( X \right) \right) + \mu \norm{X}_{\mathsf{F}}^{2} & \textrm{ if } X \succ 0 , \nonumber \\
+ \infty & \textrm{ otherwise},
\end{cases}
\end{equation*}

 \item 
 ``\emph{$\log\det$ $+$ Bounds on eigenvalues}''
 \begin{equation*}
f: \R^{n \times n} \rightarrow \R \cup \left\lbrace + \infty \right\rbrace, \quad f \left( X \right) := \begin{cases}
 - \log \left( \det \left( X \right) \right) & \textrm{ if } X \in E_{\lambda_{-},\lambda_{+}} \left( \R^{n \times n} \right) , \nonumber \\
 + \infty & \textrm{ otherwise},
 \end{cases}
 \end{equation*}

\item 
``\emph{von Neumann entropy	$+$ Nuclear norm}''
\begin{equation*}
f: \R^{n \times n} \rightarrow \R \cup \left\lbrace + \infty \right\rbrace, \quad f \left( X \right) := \begin{cases}
\mathrm{trace} \left( X \log \left( X \right) \right) + \mu \norm{X}_{\ast} & \textrm{ if } X \succeq 0 , \nonumber \\
+ \infty & \textrm{ otherwise},
\end{cases}
\end{equation*}

\item 
``\emph{von Neumann entropy	$+$ Squared Frobenius norm}''
\begin{equation*}
f: \R^{n \times n} \rightarrow \R \cup \left\lbrace + \infty \right\rbrace, \quad f \left( X \right) := \begin{cases}
\mathrm{trace} \left( X \log \left( X \right) \right) + \mu \norm{X}_{\mathsf{F}}^{2} & \textrm{ if } X \succeq 0 , \nonumber \\
+ \infty & \textrm{ otherwise},
\end{cases}
\end{equation*}

 \item 
 ``\emph{von Neumann entropy	$+$ Bounds on eigenvalues}''
 \begin{equation*}
f: \R^{n \times n} \rightarrow \R \cup \left\lbrace + \infty \right\rbrace, \quad f \left( X \right) := \begin{cases}
 \mathrm{trace} \left( X \log \left( X \right) \right) & \textrm{ if } X \in E_{\lambda_{-},\lambda_{+}} \left( \R^{n \times n} \right) , \nonumber \\
 + \infty & \textrm{ otherwise},
 \end{cases}
 \end{equation*}

\item 
``\emph{Ridge $+$ Schatten $q$-penalty}''
\begin{equation*}
f: \R^{m \times n} \rightarrow \R,  \quad f \left( X \right) := \dfrac{1}{2} \norm{X}_{\mathsf{F}}^{2} + \mu \norm{X}_{\mathcal{S}_{q}}^{q} .
\end{equation*}
\end{enumerate}

For all cases we considered $\mu := 10^{-1}$ and solved the optimization problem \eqref{pb:Moreau} for $\gamma:=1$.  For this purpose we used Algorithm \ref{algo:prox-vbased} and Algorithm \ref{algo:prox-gbased} when $p:=2$, the classical proximal point algorithm (PPA) (\cite{BaCo,Beck}) and the special case of FISTA obtained in the absence of the smooth term (\cite{BT}),  which we also compared with each other. For the formulas of the proximal operators of the seven objective functions, see \cite{Beck,BCP,CCCP}.

We have set $n := 10^2$, and for the Ridge $+$ Schatten $q$-penalty we have chosen $m \in \left\lbrace 1 , 10^{2} , 10^{4} \right\rbrace$ to see to what extent the dimension of the matrix affects the numerical performance, also for different values of $q$.  The initial point $Y_{0}$ has been generated randomly, whereas for Algorithm \ref{algo:prox-vbased} we have simply added $1$ to all its entries to obtain $Y_{-1}$.

We have stopped the algorithms either when
\begin{equation*}
\dfrac{\norm{Y_{k+1}-Y_{k}}}{\norm{Y_{k+1}}} < \mathtt{Tol}
\end{equation*}
or if they a maximum allowed number of iterations $\mathtt{Ite\_max}$ has been exceeded.  We have set $\mathtt{Tol}:=10^{-16}$ and $\mathtt{Ite\_max} := 10^{3}$.  The performance of the four algorithms has been compared in terms of $f \left( \prox_{f} \left( Y_{k} \right) \right) - f_{*}$ in logarithmic scale,  where $f_*$ is the minimum value the objective function takes over all generated sequences.

According to the main convergence theorems,   for $p=2$,  Algorithm \ref{algo:prox-vbased} and Algorithm \ref{algo:prox-gbased} have rates of convergence of  $o \left( k^{-3/2} \right)$ as $k \rightarrow +\infty$,  which is faster than that of PPA, but worse than that of FISTA.  As Figure \ref{fig:PPA-FISTA-PEAS-sym} shows for the objective functions (a) - (f),  Algorithm \ref{algo:prox-vbased} and Algorithm \ref{algo:prox-gbased} outperform PPA and FISTA on almost all instances considered.  On the other hand, they do not exhibit oscillations, a phenomenon known to occur with momentum algorithms. 
The better convergence properties of the algorithms presented in this paper are confirmed for the different instances considered in th Figures \ref{fig:PPA-FISTA-PEAS-ridge-1e0} - \ref{fig:PPA-FISTA-PEAS-ridge-1e4} when minimizing the \emph{Ridge $+$ Schatten $q$-penalty function},  especially as the values for $q$ become larger.  We believe that the consistent improvements in the convergence of the function values observed for  Algorithm \ref{algo:prox-vbased} and Algorithm \ref{algo:prox-gbased} are due to the fact that these algorithms make much better use of local information and adjust the step size accordingly.

\graphicspath{ {plots/} }

\begin{figure}[!htb]
\centering
\begin{subfigure}{0.48\textwidth}
\includegraphics[width=\linewidth]{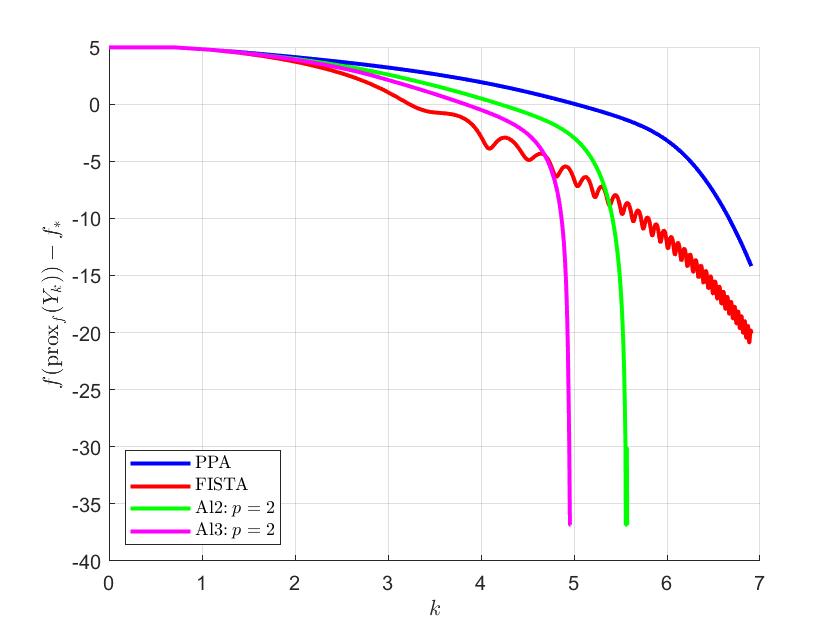}
\caption{``\emph{$\log\det$ $+$ Nuclear norm}''}
\par\medskip
\includegraphics[width=\linewidth]{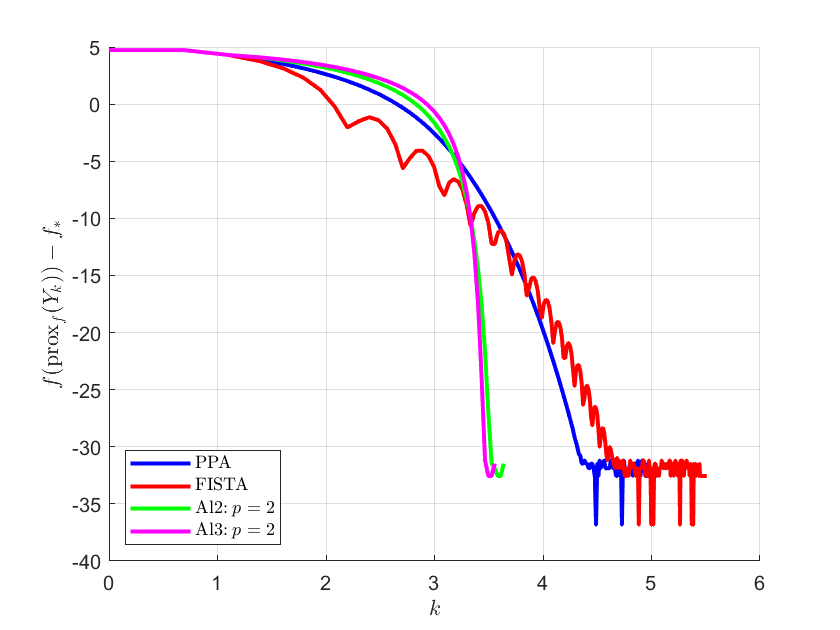}
\caption{``\emph{$\log\det$ $+$ Squared Frobenius norm}''}
\par\medskip
\includegraphics[width=\linewidth]{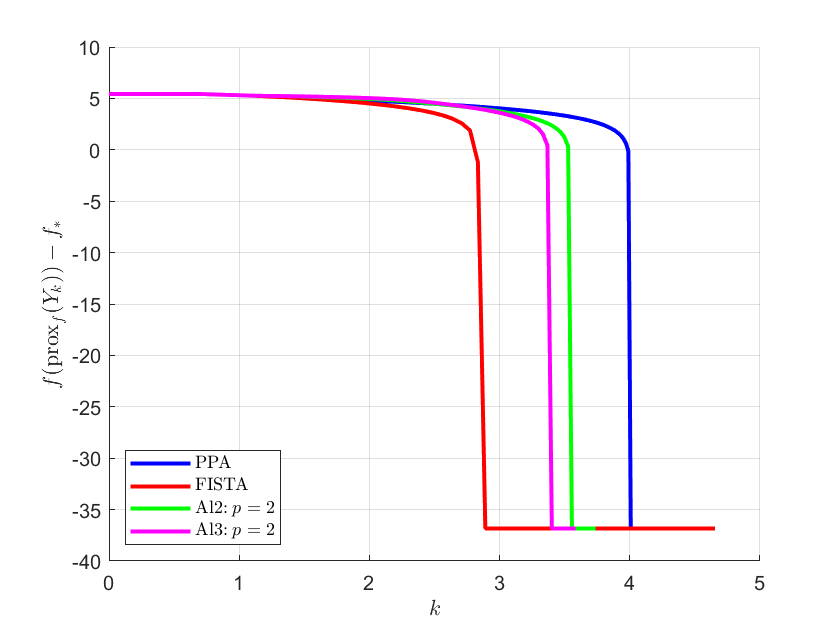}
\caption{``\emph{$\log\det$ $+$ Bounds on eigenvalues}''}
\end{subfigure}\hfill
\begin{subfigure}{0.48\textwidth}
\includegraphics[width=\linewidth]{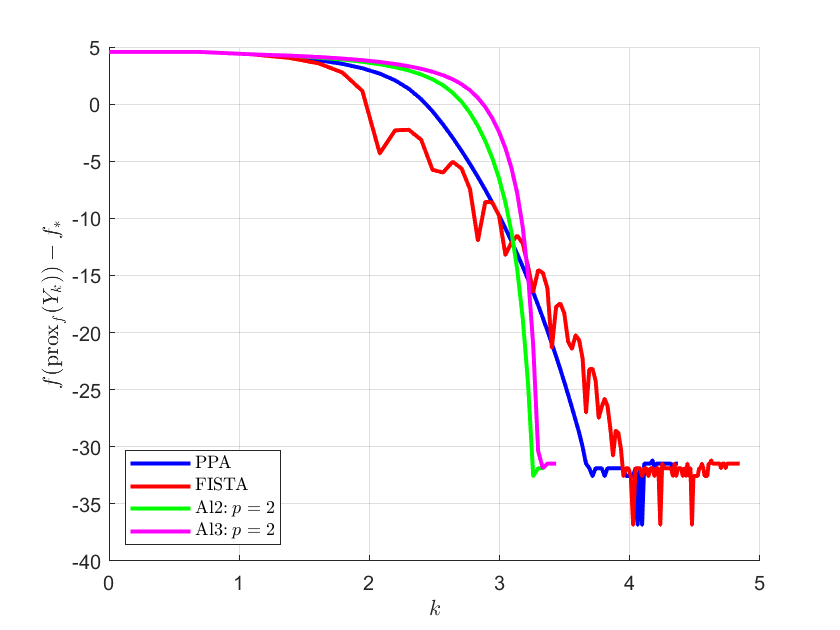}
\caption{``\emph{von Neumann entropy $+$ Nuclear norm}''}
\par\medskip
\includegraphics[width=\linewidth]{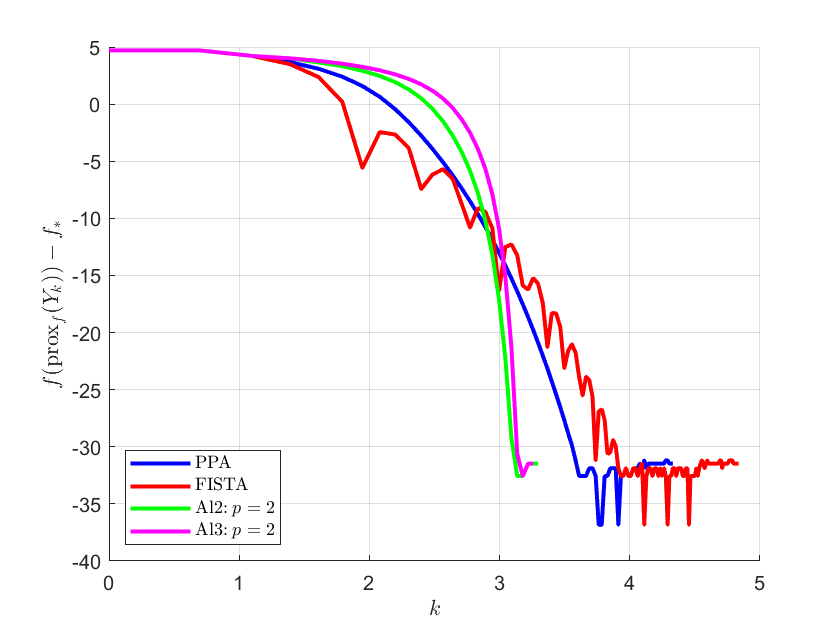}
\caption{``\emph{von Neumann entropy $+$ Squared Frobenius norm}''}
\par\medskip
\includegraphics[width=\linewidth]{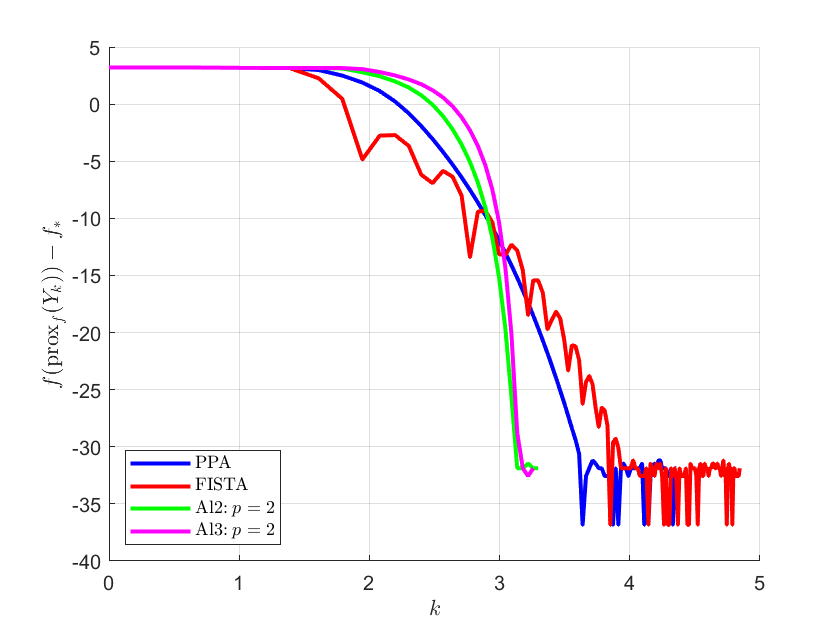}
\caption{``\emph{von Neumann entropy $+$ Bounds on eigenvalues}''}
\end{subfigure}
\caption{Numerical comparisons between PPA, FISTA, and Algorithm \ref{algo:prox-vbased} and Algorithm \ref{algo:prox-gbased}.}
\label{fig:PPA-FISTA-PEAS-sym}
\end{figure}

\begin{figure}[!htb]
\centering
\begin{subfigure}{0.48\textwidth}
\includegraphics[width=\linewidth]{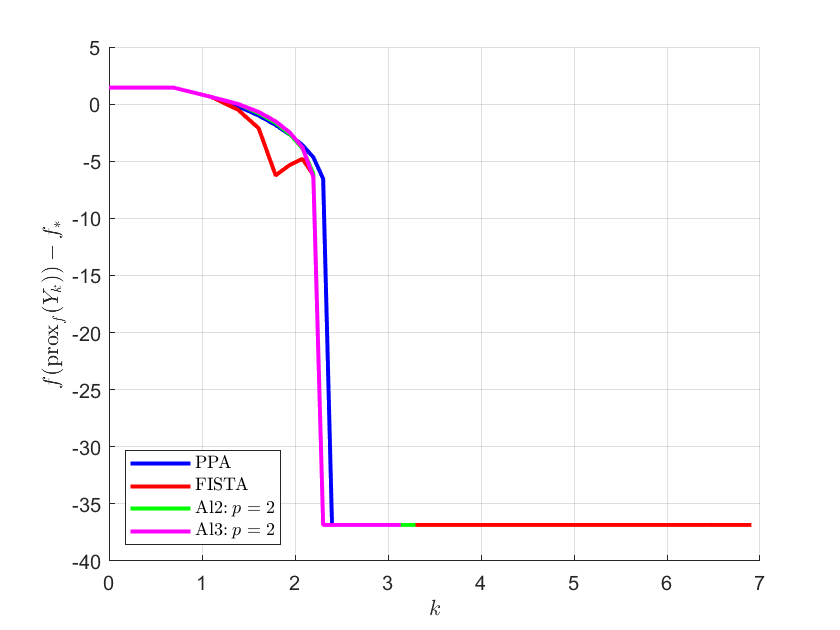}
\caption{``\emph{Ridge $+$ Nuclear norm}''}
\par\medskip
\includegraphics[width=\linewidth]{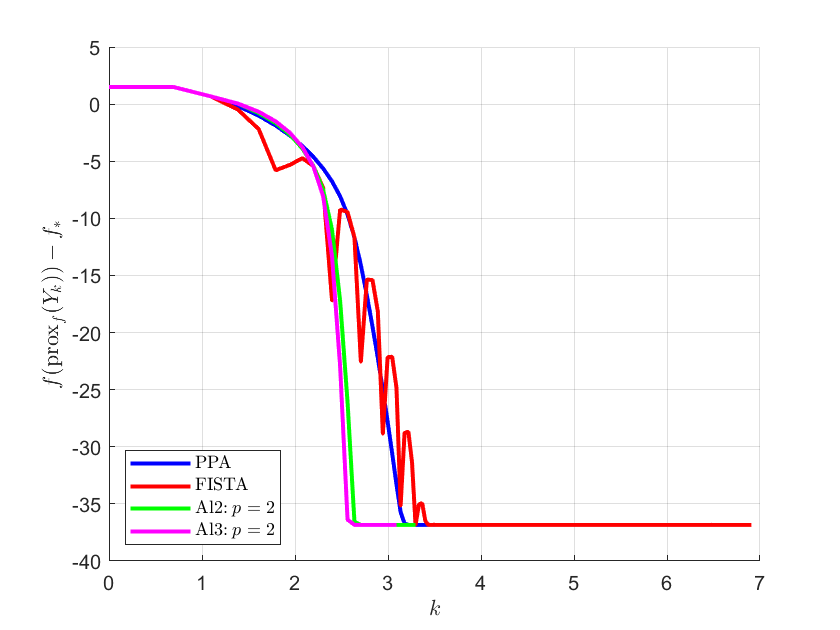}
\caption{``\emph{Ridge $+$ Schatten $4/3$-penalty}''}
\end{subfigure}\hfill
\begin{subfigure}{0.48\textwidth}
\includegraphics[width=\linewidth]{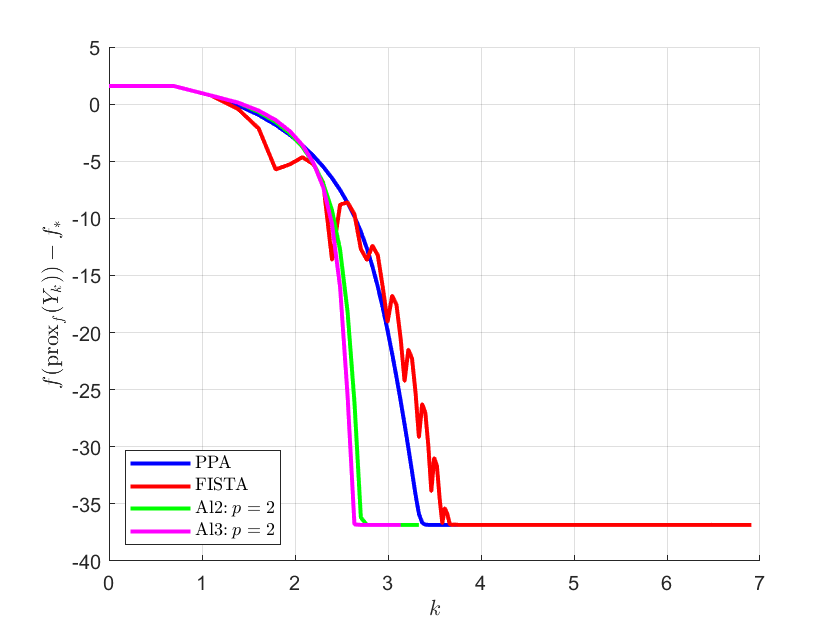}
\caption{``\emph{Ridge $+$ Schatten $3/2$-penalty}''}
\par\medskip
\includegraphics[width=\linewidth]{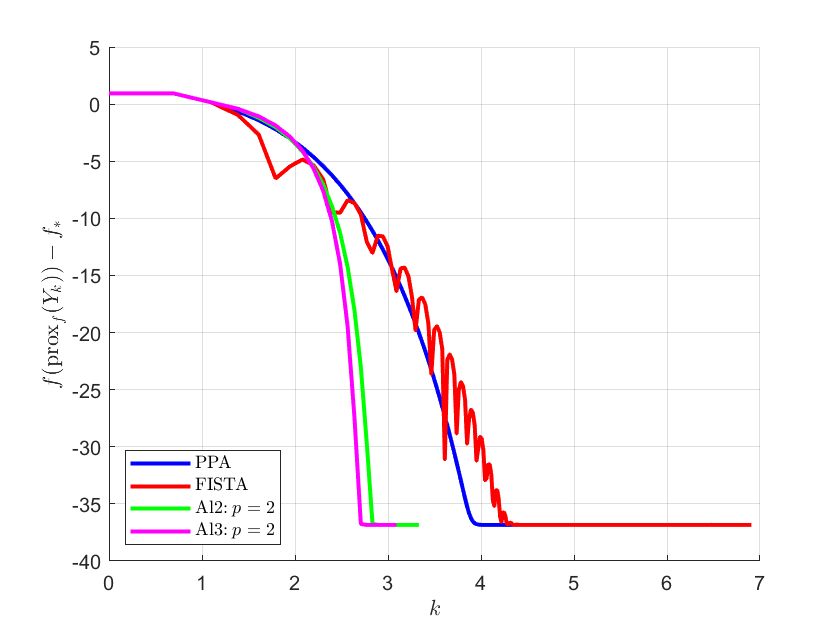}
\caption{``\emph{Ridge $+$ Schatten $4$-penalty}''}
\end{subfigure}
\caption{Numerical comparisons between PPA,  FISTA, and Algorithm \ref{algo:prox-vbased} and Algorithm \ref{algo:prox-gbased} for $\left( m , n \right) := \left( 1 , 10^{2} \right)$.}
\label{fig:PPA-FISTA-PEAS-ridge-1e0}
\end{figure}

\begin{figure}[!htb]
\centering
\begin{subfigure}{0.48\textwidth}
\includegraphics[width=\linewidth]{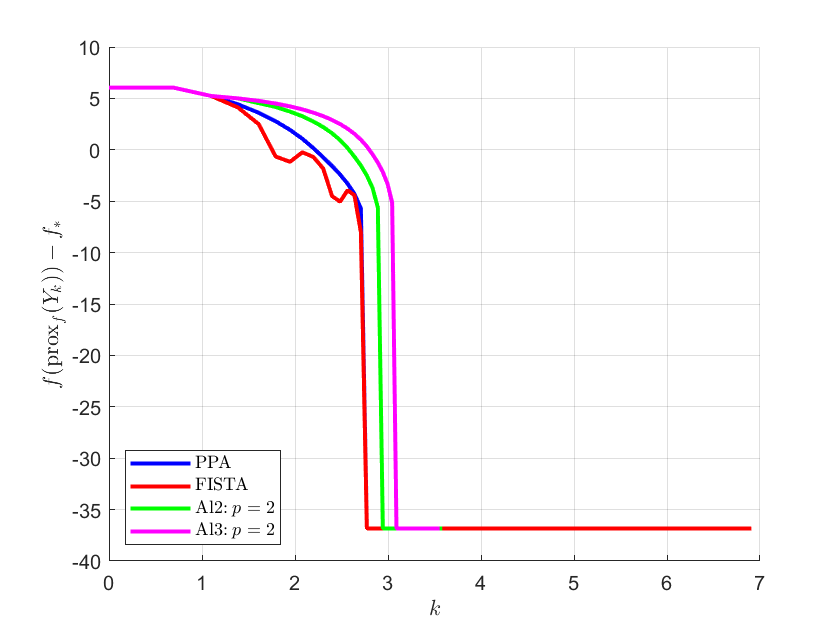}
\caption{``\emph{Ridge $+$ Nuclear norm}''}
\par\medskip
\includegraphics[width=\linewidth]{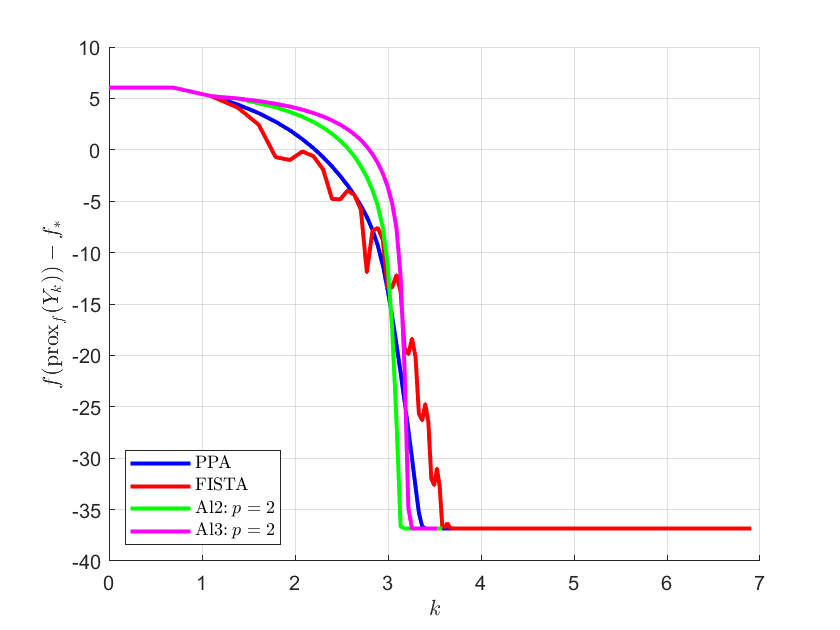}
\caption{``\emph{Ridge $+$ Schatten $4/3$-penalty}''}
\end{subfigure}\hfill
\begin{subfigure}{0.48\textwidth}
\includegraphics[width=\linewidth]{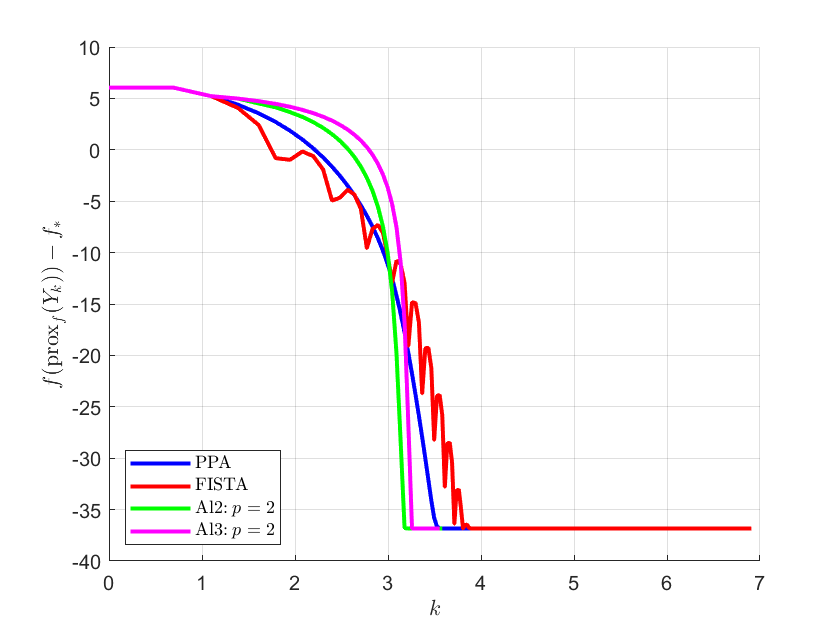}
\caption{``\emph{Ridge $+$ Schatten $3/2$-penalty}''}
\par\medskip
\includegraphics[width=\linewidth]{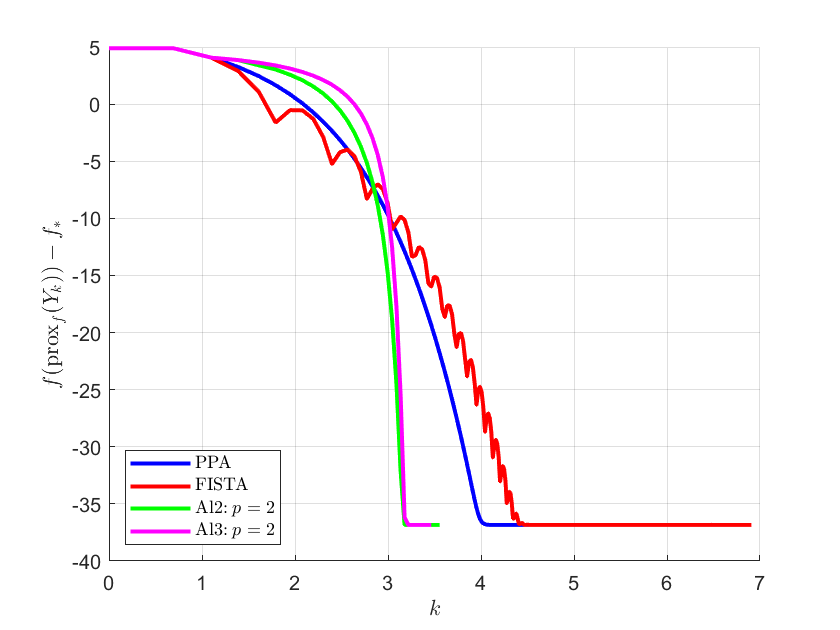}
\caption{``\emph{Ridge $+$ Schatten $4$-penalty}''}
\end{subfigure}
\caption{Numerical comparisons between PPA,  FISTA, and Algorithm \ref{algo:prox-vbased} and Algorithm \ref{algo:prox-gbased} for  $\left( m , n \right) := \left( 10^{2} , 10^{2} \right)$.}
\label{fig:PPA-FISTA-PEAS-ridge-1e2}
\end{figure}

\begin{figure}[!htb]
\centering
\begin{subfigure}{0.48\textwidth}
\includegraphics[width=\linewidth]{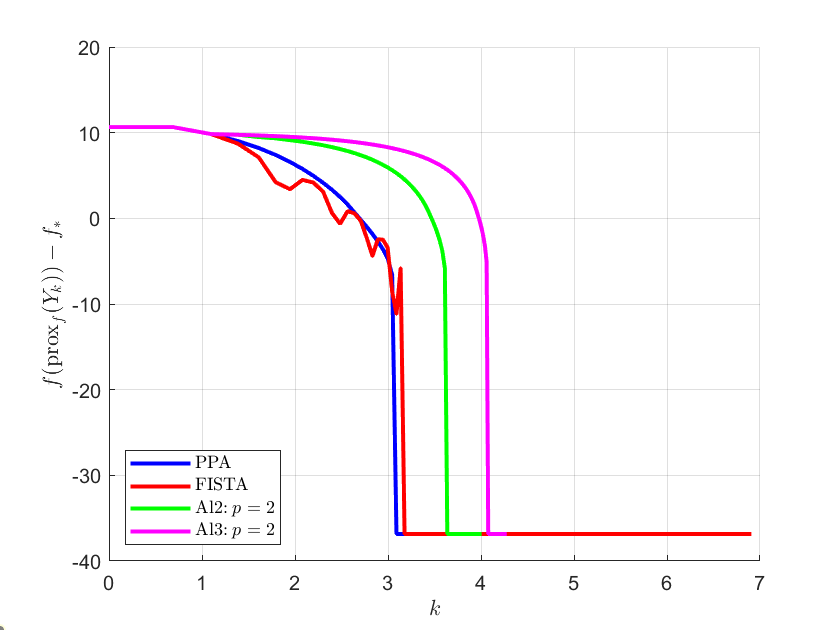}
\caption{``\emph{Ridge $+$ Nuclear norm}''}
\par\medskip
\includegraphics[width=\linewidth]{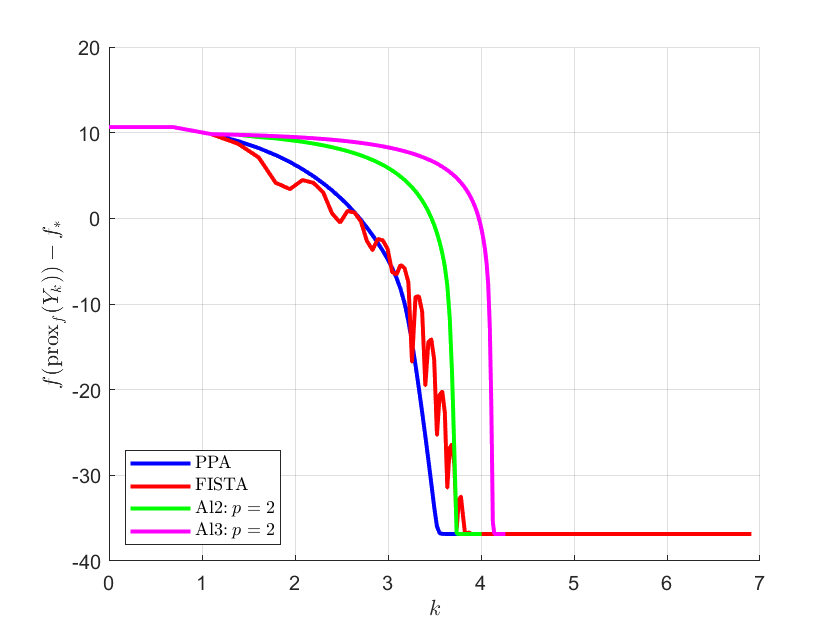}
\caption{``\emph{Ridge $+$ Schatten $4/3$-penalty}''}
\end{subfigure}\hfill
\begin{subfigure}{0.48\textwidth}
\includegraphics[width=\linewidth]{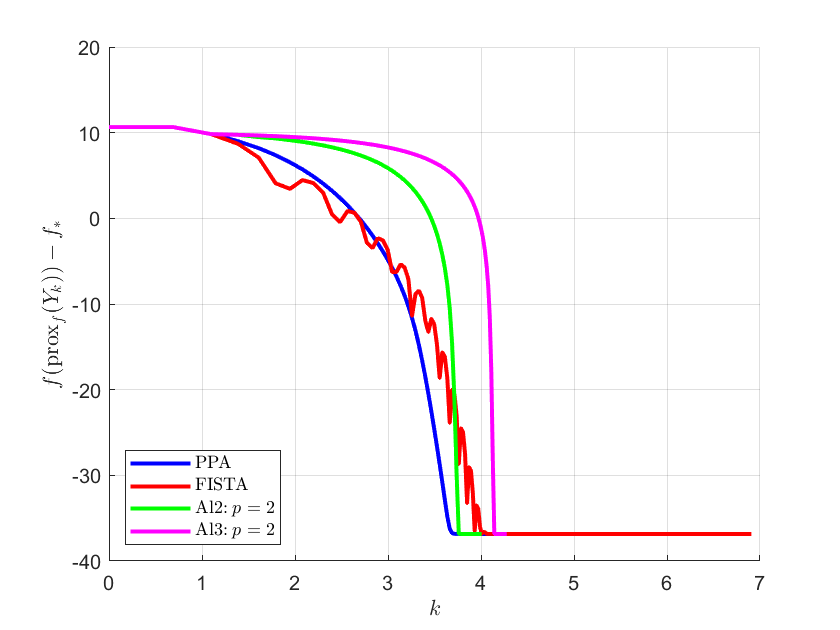}
\caption{``\emph{Ridge $+$ Schatten $3/2$-penalty}''}
\par\medskip
\includegraphics[width=\linewidth]{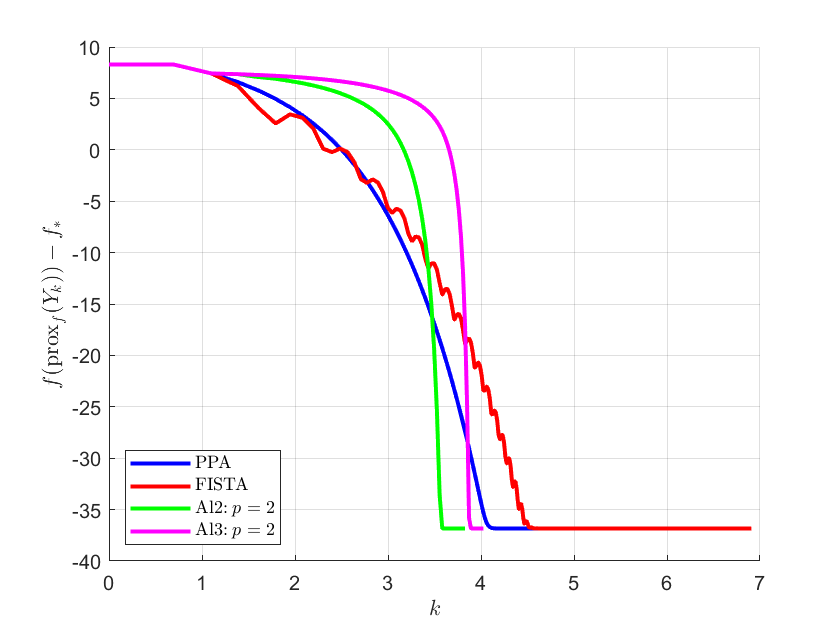}
\caption{``\emph{Ridge $+$ Schatten $4$-penalty}''}
\end{subfigure}
\caption{Numerical comparisons between PPA,  FISTA, and Algorithm \ref{algo:prox-vbased} and Algorithm \ref{algo:prox-gbased} for  $\left( m , n \right) := \left( 10^{4} , 10^{2} \right)$.}
\label{fig:PPA-FISTA-PEAS-ridge-1e4}
\end{figure}

\section{Conclusion and perspective}

Our study proposes new fast adaptive optimization methods for convex optimization. We have shown that the time scaling and averaging technique, previously developed by the authors in the context of non-autonomous systems, can be developed by taking closed-loop time parameterization, giving rise to autonomous dynamics.
The method turns out to be flexible, because it is based on elementary mathematical tools, namely the dynamics of the steepest descent, and the operations of temporal parameterization and averaging. It is therefore not necessary to redo a Lyapunov analysis, one relies on the classic results for the steepest descent.
The results obtained for the continuous dynamics pass quite naturally to the corresponding proximal algorithms, where the iterates are expressed in a direct way according to the proximal terms.
This study is one of the very first to develop an algorithmic framework based on autonomous dynamics and which, when specialized, provides the convergence rates of the dynamical surrogate of the  Nesterov acceleration gradient method.
Another important aspect of our analysis is that it  exhibits Hessian-driven damping, which plays a key role in damping oscillations.
Our work opens up many perspectives, our method naturally extending to gradient algorithms, proximal-gradient algorithms for composite optimization, cocercive monotone operators, and the study of the stochastic version, to name only a few.

\section{Appendix}

\subsection{Classical facts concerning the continuous steepest descent}

Consider the classical continuous steepest descent

\begin{equation}\label{SD_appendix}
{\rm (SD)} \quad   \dot{z}(t) + \nabla f (z(t)) =0.
\end{equation}
 Under the standing assumption $(\mathcal A)$ on $f$,  we know that, for any $z_0 \in \cH$ there exists a unique classical global solution 
$z\in \mathcal C^1([t_0, +\infty[: \cH )$   of (SD) satisfying  $z(t_0 )= z_0$, see \cite[Theorem 17.1.1]{ABM_book}. We fix $t_0 $ as the origin of time.
Recall classical facts concerning the continuous steepest descent.

\medskip

\begin{theorem}\label{SD_pert_thm}
Suppose that  $f \colon \cH \to \R$ satisfies $(\mathcal A)$.
Let $z \colon \left[ t_{0} , + \infty \right[ \to \cH$ be a solution trajectory of 

\begin{equation}\label{pert SD}	
\dot{z}(t) + \nabla f( z(t)) = g(t)
\end{equation}
where  $g \colon \left[ t_{0} , + \infty \right[ \to \cH$ is such that
\begin{equation}\label{pert 01}
	\int_{t_{0}}^{+ \infty} \left\lVert g \left( t \right) \right\rVert dt < + \infty  \textrm{ and } \int_{t_{0}}^{+ \infty} t \left\lVert g \left( t \right) \right\rVert ^{2} dt < + \infty .
\end{equation}
Then the following statements are satisfied:

\begin{enumerate}
\item (convergence  of  gradients towards zero)	\quad $\left\lVert \nabla f \left( z \left( t \right) \right) \right\rVert = o \left( \dfrac{1}{\sqrt{t}} \right) \textrm{ as } t \to + \infty.
$

\item (integral estimate of the velocities)
$
\displaystyle{\int_{t_0}^{+\infty} } t \left\lVert \dot{z} \left( t \right) \right\rVert ^{2} dt < +\infty.
$

\item (integral estimate of the gradients)
$
\displaystyle{\int_{t_0}^{+\infty} } t \left\lVert \nabla f \left( z \left( t \right) \right) \right\rVert ^{2} dt < +\infty.
$

\item (convergence of values)
$
f \left( z \left( t \right) \right) -\inf\nolimits_{\cH} f = o \left( \dfrac{1}{t} \right) \textrm{ as } t \to + \infty .
$

\item (improved convergence rates of gradients)
if $g(t) \equiv 0$, then
$
\left\lVert \nabla f \left( z \left( t \right) \right) \right\rVert = o \left( \dfrac{1}{t} \right) \textrm{ as } t \to + \infty .
$

\item 
The solution trajectory $z(t)$ converges weakly as $t \to +\infty$, and its limit belongs to $\cS=\argmin f$.
\end{enumerate}
\end{theorem}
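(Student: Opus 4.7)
The plan is to fix $z_{*} \in S = \argmin_{\cH} f$ and organize the proof around a short hierarchy of Lyapunov functionals, each providing one level of information: first boundedness, then the main convergence rate, then the sharpening to little-$o$, and finally the Opial argument. The standing hypothesis $(\mathcal{A})$, combined with the two integrability conditions on $g$, is what permits absorbing the perturbation at each step.

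First I would consider $V(t) := \tfrac{1}{2}\|z(t) - z_{*}\|^{2}$; differentiating, substituting \eqref{pert SD}, and invoking the convexity inequality $\langle \nabla f(z(t)), z(t) - z_{*}\rangle \geq f(z(t)) - \inf_{\cH} f \geq 0$ yields $\dot V(t) \leq \|g(t)\|\sqrt{2V(t)}$. A standard Gronwall-type argument then shows that $\phi(t) := \|z(t) - z_{*}\|$ is bounded, and combined with $\int_{t_{0}}^{+\infty}\|g(s)\|\,ds < \infty$ it delivers the existence of $\lim_{t \to +\infty}\phi(t)$, which is already the first Opial condition required for $(vi)$. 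Next, the central Lyapunov function is
$$
E(t) := t\bigl(f(z(t)) - \inf_{\cH} f\bigr) + \tfrac{1}{2}\|z(t) - z_{*}\|^{2}.
$$
Differentiating, substituting \eqref{pert SD}, using convexity and Young's inequality on the cross term $t\langle \nabla f(z(t)), g(t)\rangle$, one gets
$$
\dot E(t) + \tfrac{t}{2}\|\nabla f(z(t))\|^{2} \leq \tfrac{t}{2}\|g(t)\|^{2} + M\|g(t)\|,
$$
with $M := \sup_{t} \phi(t)$. The two summability assumptions on $g$ then provide $\sup_{t} E(t) < \infty$ along with $(iii)$, while $(ii)$ is immediate from $\|\dot z\|^{2} \leq 2\|\nabla f(z)\|^{2} + 2\|g\|^{2}$.

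To upgrade the resulting $O(1/t)$ bound on the values to the $o(1/t)$ statement $(iv)$, I would introduce the "tail-corrected" energy $G(t) := f(z(t)) - \inf_{\cH} f + \tfrac{1}{2}\int_{t}^{+\infty}\|g(s)\|^{2}\,ds$; using $\tfrac{d}{dt}f(z(t)) = -\|\nabla f(z(t))\|^{2} + \langle \nabla f(z(t)), g(t)\rangle$ and Young's inequality, one checks $\dot G(t) \leq -\tfrac{1}{2}\|\nabla f(z(t))\|^{2} \leq 0$. A separate integration of the convexity inequality against $\dot z$ yields $\int_{t_{0}}^{+\infty}(f(z) - \inf_{\cH}f)\,dt < \infty$, so $G \in L^{1}$; the classical lemma saying that a nonnegative nonincreasing $L^{1}$ function is $o(1/t)$ then concludes.

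For the pointwise gradient estimates $(i)$ and $(v)$, the key observation is that the bounded orbit lies in a set on which $\nabla f$ is $L$-Lipschitz, so
$$
\tfrac{d}{dt}\|\nabla f(z(t))\|^{2} = -2\langle \nabla^{2}f(z)\nabla f(z), \nabla f(z)\rangle + 2\langle \nabla^{2}f(z)\,g(t), \nabla f(z)\rangle \leq 2LN\|g(t)\|,
$$
where $N := \sup_{t}\|\nabla f(z(t))\|$, itself finite by a bootstrap using $\sqrt{h}' \leq L\|g\|$. When $g \equiv 0$, the function $\|\nabla f(z)\|^{2}$ is outright nonincreasing, and combining with $(iii)$ gives, for $t \geq 2t_{0}$, the chain $\tfrac{3t^{2}}{8}\|\nabla f(z(t))\|^{2} \leq \int_{t/2}^{t} s\|\nabla f(z(s))\|^{2}\,ds \to 0$, proving $(v)$. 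The hard part is $(i)$ in the perturbed case: the cleanest route is a contradiction argument in which, if $t_{n}\|\nabla f(z(t_{n}))\|^{2} \geq \varepsilon$ along a subsequence, the "almost-monotonicity" $\|\nabla f(z(t))\|^{2} \leq \|\nabla f(z(s))\|^{2} + 2LN\int_{s}^{t}\|g(r)\|\,dr$ (whose perturbation tail vanishes as $s,t \to \infty$) forces $\|\nabla f(z(s))\|^{2} \gtrsim \varepsilon/t_{n}$ on intervals of the form $[(1-\delta)t_{n}, t_{n}]$, and extracting a disjoint family of such intervals contradicts $\int t\|\nabla f(z)\|^{2}\,dt < \infty$. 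This single step is the main obstacle of the proof. Finally, $(vi)$ is obtained from Opial's lemma: the first condition was already secured in the opening paragraph, while the second---every weak cluster point of $z(t)$ lies in $S$---follows from $(iv)$ and the weak lower semicontinuity of $f$.
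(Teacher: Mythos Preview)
Your overall Lyapunov hierarchy is sound and, for the only item the paper actually argues---namely $(v)$---your approach coincides with theirs: both of you observe that $t\mapsto\|\nabla f(z(t))\|^2$ is nonincreasing when $g\equiv 0$ and combine this with the integral estimate $(iii)$ to squeeze out the $o(1/t)$ rate. The paper treats the remaining items as classical and provides no proof, so your write-up is in fact considerably more complete than what appears there.

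There is, however, a genuine gap in your argument for $(i)$ in the perturbed case. The bound you derive, $\frac{d}{dt}\|\nabla f(z(t))\|^2 \leq 2LN\|g(t)\|$, only gives $\|\nabla f(z(s))\|^2 \geq \varepsilon/t_n - 2LN\int_{(1-\delta)t_n}^{t_n}\|g\|$ on your interval, and for the contradiction you need the tail $\int_{(1-\delta)t_n}^{\infty}\|g\|$ to be $o(1/t_n)$. But $\int\|g\|<\infty$ does not force $t\int_t^\infty\|g\|\to 0$ (take $\|g(t)\|=1/(t\log^2 t)$), so the lower bound can collapse and the argument stalls. The fix is to sharpen the derivative estimate: writing $2\langle \nabla^2 f(z)\,g,\nabla f(z)\rangle = 2\langle (\nabla^2 f)^{1/2}g,(\nabla^2 f)^{1/2}\nabla f(z)\rangle$ and applying Young's inequality yields $\frac{d}{dt}\|\nabla f(z(t))\|^2 \leq L\|g(t)\|^2$. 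Then $H(t):=\|\nabla f(z(t))\|^2 + L\int_t^\infty\|g\|^2$ is nonnegative, nonincreasing, and $\int_{t_0}^\infty H < \infty$ thanks to $(iii)$ and $\int t\|g\|^2<\infty$; the standard ``nonincreasing $L^1$ function is $o(1/t)$'' lemma then gives $(i)$ directly, without the contradiction route. A minor related point: both you and the paper tacitly invoke $\nabla^2 f$ although $(\mathcal A)$ only assumes $f\in\mathcal C^1$; this is harmless at the level of rigor used throughout, but a fully careful treatment would phrase the monotonicity in integrated form.
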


If $g(t) \equiv 0$ we have that $t \mapsto \left\lVert \nabla f \left( z \left( t \right) \right) \right\rVert$ is nonincreasing since in this case $$\frac{d}{dt} \left\lVert \nabla f \left( z \left( t \right) \right) \right\rVert ^{2} = 2 \left\langle \nabla f \left( z \left( t \right) \right) , \frac{d}{dt} \nabla f \left( z \left( t \right) \right) \right\rangle = - 2 \left\langle \dot{z} \left( t \right) , \frac{d}{dt} \nabla f \left( z \left( t \right) \right) \right\rangle \leq 0 \quad \forall t \geq t_0.$$
Therefore, from the integral estimate of the gradients we deduce that $\left\lVert \nabla f \left( z \left( t \right) \right) \right\rVert = o \left( \frac{1}{t} \right)$.

\subsection{Auxiliary result}

Opial's Lemma is a basic ingredient of the convergence analysis.
\begin{lemma}\label{Opial} (Opial) Let $S$ be a nonempty subset of $\cH$ and let $\left( x_{k} \right) _{k \geq 0}$ be a sequence in $\cH$. Assume that 
	\begin{enumerate}
		\item for every $z \in S$, $\lim_{k \to + \infty} \left\lVert x_{k} - z \right\rVert$ exists;
		\item every weak sequential limit point of $\left( x_{k} \right) _{k \geq 0}$, as $k \to + \infty$, belongs to $S$.
	\end{enumerate}
	Then  $\left( x_{k} \right) _{k \geq 0}$ converges weakly as $k \to +\infty$, and its limit belongs to $S$.
\end{lemma}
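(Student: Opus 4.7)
The plan is the standard Opial argument, which proceeds by first extracting weak cluster points and then showing uniqueness via the parallelogram identity combined with hypothesis $(i)$.

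First I would note that hypothesis $(i)$ immediately implies that $(x_k)_{k \geq 0}$ is bounded: fixing any $z \in S$ (which is possible since $S \neq \emptyset$), the sequence $\|x_k - z\|$ converges in $\R$, hence is bounded, so $(x_k)$ lies in a ball around $z$. Since bounded sequences in a Hilbert space are weakly relatively sequentially compact (Banach-Alaoglu together with the separability reduction, or directly by the Eberlein-\v{S}mulian theorem), there exists at least one weak sequential limit point, and by hypothesis $(ii)$ every such point belongs to $S$.

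The remaining, and main, step is to show that the weak sequential limit point is unique. Suppose, for contradiction, that $x_\infty$ and $y_\infty$ are two distinct weak sequential limit points of $(x_k)$, with $x_{k_n} \rightharpoonup x_\infty$ and $x_{k_m} \rightharpoonup y_\infty$. By hypothesis $(ii)$ both lie in $S$, so by hypothesis $(i)$ the limits $\ell_x := \lim_{k \to +\infty} \|x_k - x_\infty\|$ and $\ell_y := \lim_{k \to +\infty} \|x_k - y_\infty\|$ both exist in $\R$. Using the identity
\begin{equation*}
	\|x_k - x_\infty\|^2 - \|x_k - y_\infty\|^2 = \|y_\infty\|^2 - \|x_\infty\|^2 + 2 \langle x_k, \, y_\infty - x_\infty \rangle,
\end{equation*}
the left-hand side converges to $\ell_x^2 - \ell_y^2$, while the right-hand side, evaluated along the subsequence $(x_{k_n})$, converges to $\|y_\infty\|^2 - \|x_\infty\|^2 + 2\langle x_\infty, y_\infty - x_\infty\rangle = \|y_\infty - x_\infty\|^2 - 2\|x_\infty\|^2 + 2\langle x_\infty, y_\infty\rangle - 2\|x_\infty\|^2 + \cdots$, which after simplification equals $-\|y_\infty - x_\infty\|^2$ up to sign; evaluating along $(x_{k_m})$ instead yields $+\|y_\infty - x_\infty\|^2$. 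Equating the two values of the same limit $\ell_x^2 - \ell_y^2$ forces $\|x_\infty - y_\infty\|^2 = 0$, contradicting $x_\infty \neq y_\infty$.

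No real obstacle is expected; the only delicate point is correctly bookkeeping the sign in the algebraic identity above so that the two evaluations along the two subsequences give opposite-signed quantities and thereby force $x_\infty = y_\infty$. Once uniqueness of the weak cluster point of the bounded sequence $(x_k)$ is established, a standard subsequence argument shows that the whole sequence converges weakly to that common point, which lies in $S$ by hypothesis $(ii)$.
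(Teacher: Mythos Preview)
The paper does not supply a proof of this lemma; it merely states it in the appendix as a known auxiliary result (Opial's lemma). Your proposal follows the standard proof, and the overall strategy---boundedness from $(i)$, existence of weak cluster points by weak sequential compactness, uniqueness via the polarization identity---is correct.

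That said, the algebra in your uniqueness step is garbled. The displayed identity has a sign error: the correct expansion is
\[
\|x_k - x_\infty\|^2 - \|x_k - y_\infty\|^2 = \|x_\infty\|^2 - \|y_\infty\|^2 + 2\langle x_k,\, y_\infty - x_\infty\rangle,
\]
and your subsequent ``simplification'' trails off incoherently. The clean computation is: passing to the limit along $(k_n)$ (where $x_{k_n}\rightharpoonup x_\infty$) gives right-hand side $\to -\|x_\infty - y_\infty\|^2$, while along $(k_m)$ (where $x_{k_m}\rightharpoonup y_\infty$) it gives $+\|x_\infty - y_\infty\|^2$. Since the left-hand side has the single limit $\ell_x^2 - \ell_y^2$, equating the two forces $\|x_\infty - y_\infty\|^2 = 0$. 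You clearly know this is what should happen, but in a written proof you should carry out the two-line computation rather than write ``up to sign'' and hope.
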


\section*{Funding}

The research of RIB and DKN has been supported by FWF (Austrian Science Fund), projects W 1260 and P 34922-N, respectively.

\section*{Acknowledgements}

This work was completed in the final year of Hedy Attouch’s life,  just months before his passing.  Radu Ioan Bo\c t and
Dang-Khoa Nguyen wish to take this opportunity to pay tribute to a remarkable mathematician, a kind and generous
soul, whose absence is deeply felt.


\end{document}